\documentclass{article}
\usepackage[a4paper,top=3cm,bottom=3cm,left=2.1cm,right=2.1cm,marginparwidth=1.75cm]{geometry}

\usepackage{graphicx} 

\usepackage{bm}
\usepackage{amsmath, amsfonts, amssymb, amsthm}
\usepackage{mathrsfs, dsfont}
\usepackage[dvipsnames]{xcolor}
\usepackage{graphicx}
\usepackage{verbatim}
\usepackage{float}
\usepackage{subcaption}
\usepackage{comment}
\usepackage{ulem}
\usepackage{bigints}

\usepackage[colorlinks=true, allcolors=blue]{hyperref}
\usepackage{xparse} 
\usepackage{hyperref}
\usepackage{tasks}
\settasks{style=itemize}
\usepackage{parskip}
\usepackage{todonotes}
\usepackage{subcaption}
\DeclareFontFamily{U}{mathx}{}
\DeclareFontShape{U}{mathx}{m}{n}{ <-> mathx10 }{}
\DeclareSymbolFont{mathx}{U}{mathx}{m}{n}
\DeclareFontSubstitution{U}{mathx}{m}{n}
\DeclareMathAccent{\widecheck}{0}{mathx}{"71}

\usepackage[round]{natbib}

\newtheorem{theorem}{Theorem}[section]

\newtheorem{lemma}[theorem]{Lemma}
\newtheorem{definition}[theorem]{Definition}

\newtheorem{proposition}[theorem]{Proposition}

\newtheorem{corollary}[theorem]{Corollary}

\theoremstyle{remark}
\newtheorem{remark}{Remark}[section]
\newtheorem{example}{Example}[section]

\numberwithin{equation}{section}

\newenvironment{sqremark}{\begin{remark}}{\hfill \tiny $\blacksquare$ \end{remark}}
\newenvironment{sqexample}{\begin{example}}{\hfill \tiny $\blacksquare$ \end{example}}

\usepackage{pifont}

\newcommand{\R}{\mathbb{R}}
\newcommand{\N}{\mathbb{N}}
\newcommand{\C}{\mathbb{C}}

\newcommand{\E}{\mathbb{E}}

\def\P{\mathbb{P}} 
\newcommand{\F}{\mathcal{F}}

\newcommand{\abs}[1]{\left|#1 \right|}
\newcommand{\norm}[1]{\abs{\abs{#1}}}

\renewcommand{\Re}{\, \mathfrak{Re}}
\renewcommand{\Im}{\, \mathfrak{Im}}
\newcommand{\indic}[1]{\mathds{1}_{\left\{ #1 \right\}}}

\newcommand{\TA}[1][d]{T(\R^{#1})}
\newcommand{\eTA}[1][d]{T((\R^{#1}))}

\newcommand{\TAC}[1][d]{T(\C^{#1})}
\newcommand{\eTAC}[1][d]{T((\C^{#1}))}

\newcommand{\emptyword}{{\color{NavyBlue}{\mathbf{\varnothing}}}}
\newcommand{\word}[1]{{\color{NavyBlue}{\mathbf{#1}}}}
\newcommand{\proj}[1]{|_{\word{#1}}}

\newcommand{\bp}{\bm{p}}
\newcommand{\bq}{\bm{q}}

\newcommand{\bxi}{\bm{\xi}}
\newcommand{\wv}{\word{v}}

\newcommand{\bsigma}{\bm{\sigma}}
\newcommand{\bgamma}{\bm{\gamma}}

\newcommand{\bpsi}{\bm{\psi}}
\newcommand{\bu}{\bm{u}}

\newcommand{\bchi}{\bm{\chi}}

\newcommand{\bell}{\bm{\ell}}
\newcommand{\br}{\bm{r}}

\newcommand{\shuprod}{\mathrel{\sqcup \mkern -3mu \sqcup}}
\newcommand{\shupow}[1]{^{\shuprod #1}}
\newcommand{\shuexp}[1]{\exp^{\shuprod}\left({#1}\right)}
\newcommand{\conpow}[1]{^{\otimes #1}}

\NewDocumentCommand{\sighat}{O{t} O{W}}{\widehat{\mathbb{#2}}_{#1}}
\NewDocumentCommand{\sigtilde}{O{t} O{W}}{\widetilde{\mathbb{#2}}_{#1}}
\NewDocumentCommand{\sig}{O{t} O{W}}{\sighat[#1][#2]}
\newcommand{\sigW}[1][t]{\widehat{\mathbb{W}}_{#1}}

\newcommand{\sigX}[1][t]{{\mathbb{X}}_{#1}}
\newcommand{\sigY}[1][t]{{\mathbb{Y}}_{#1}}
\newcommand{\sigXhat}[1][t]{{\widehat{\mathbb{X}}}_{#1}}

\newcommand{\bracket}[2]{\langle #1, #2 \rangle}

\NewDocumentCommand{\bracketsig}{O{t} O{W} m}{\bracket{#3}{\sig[#1][#2]}}
\newcommand{\bracketsigW}[2][t]{\bracket{#2}{\widehat{\mathbb{W}}_{#1}}}

\NewDocumentCommand{\bracketsigtrunc}{O{M} O{t} O{W} m}{\bracket{#4}{\sig[#2][#3]^{\leq #1}}}
\NewDocumentCommand{\bracketsigtilde}{O{t} O{W} m}{\bracket{#3}{\sigtilde[#1][#2]}}

\usepackage{mathtools}
\usepackage{shuffle}
\usepackage{bbm}
\usepackage{authblk}
\mathtoolsset{showonlyrefs}
\usepackage{enumitem}
\usepackage{stmaryrd}

\title{Affine Structure of the Brownian Signature
}

\author[1]{Eduardo Abi Jaber\thanks{eduardo.abi-jaber@polytechnique.edu.  EAJ is grateful for the financial support from the Chaires FiME-FDD and  Financial Risks at Ecole Polytechnique.}}
\author[1]{Elie Attal\thanks{elie.attal@polytechnique.edu.}}
\author[1, 2]{Dimitri Sotnikov\thanks{dmitrii.sotnikov@polytechnique.edu. DS is grateful for the financial support provided by Engie Global Markets.}}
\affil[1]{Ecole Polytechnique, CMAP}
\affil[2]{Engie Global Markets}

\begin{document}

\maketitle

\begin{abstract}
 We establish an infinite-dimensional affine transform theory for the time-augmented Brownian signature.  Our first main result shows that,  for a suitable class of linear functions of the signature, the conditional Fourier--Laplace transform admits an entire signature expansion. We prove that the associated coefficients solve an infinite-dimensional  linear differential equation on the extended tensor algebra. Our second main result shows that the logarithm admits a local signature expansion whose coefficients satisfy a Riccati equation on the extended tensor algebra, revealing a generalized affine structure of the Brownian signature  in a genuinely path-dependent setting. In contrast to conventional affine processes, we show that this representation is intrinsically local: zeros of the Fourier--Laplace transform in the complex plane prevent any global expansion. To recover global representations, we introduce a new class of randomized Riccati equations with path-dependent terminal conditions  through a recentering argument.  Furthermore, we establish uniqueness of solutions to the linear and Riccati equations within a suitable class of solutions.  Our results provide a theoretical framework for transform methods in  non-Markovian settings, with applications to the computation of conditional distributions.
\end{abstract}

\noindent\textbf{Keywords:} Path signatures, Fourier--Laplace transforms, Riccati equation, tensor algebra, affine structure.

\noindent \textbf{Mathematics Subject Classification (2020):} {60L10, 60J65, 34G20 }

\tableofcontents

\section{Introduction}

Let $\widehat{\mathbb W}_T$ denote the Stratonovich signature of the time-augmented Brownian motion $\widehat W_t=(t,W_t)$, that is, the collection of all iterated integrals
\begin{align}\label{eq:introsig}
\widehat {\mathbb W}_{T}^{i_1,\ldots,i_n}
= \int_{0<t_1<\cdots<t_n<T} \circ d\widehat W_{t_1}^{i_1}\cdots \circ d\widehat W_{t_n}^{i_n},
\qquad i_k \in \{\word{0},\word{1}\},    
\end{align}
where $\word{0}$ denotes the time coordinate and $\word{1}$ the Brownian coordinate.

We establish signature expansions of the conditional Fourier--Laplace transform 
\begin{align}
u(t,\sigW[t])
=
\mathbb E\!\left[
\exp\!\big(\langle \bm p,\sigW[T]\rangle\big)
\,\Big|\,\sigW[t]
\right]
\end{align}
 and its logarithm, where
\[
\langle \bm p,\widehat{\mathbb W}_{t} \rangle
:=
\sum_{n\geq 0}
\sum_{i_1,\ldots,i_n \in \{\word{0}, \word{1}\}}
\bm p^{\word{i_1\ldots i_n}}
\, \widehat{\mathbb W}^{\word{i_1\ldots i_n}}_{t},
\]
with only finitely many non-zero coefficients $\bm p^{\word{i_1\ldots i_n}} \in \mathbb C$, with $\bm p^{{\emptyword}}$ denoting the constant coefficient corresponding to $n=0$. For example, for $\bp = \alpha \word{1}\word{1}\word{1}\word{1}$, we get that  $\langle \bm p,\widehat{\mathbb W}_T \rangle = \alpha \frac{W^4_t}{4!}$ and for  $\bp = \alpha \word{1}\word{1}\word{1}\word{1}\word{0}$, we get that  $\langle \bm p,\widehat{\mathbb W}_T \rangle = \alpha \int_0^T \frac{W^4_s}{4!}ds$, with $\alpha \in \mathbb C$.  Combinations of mixed words generate genuinely path-dependent functionals.

Our first main result establishes that, for a suitable class $\mathcal B$ of  coefficients $\bm p$ that we construct, the map $\mathbb X \mapsto u(t,\mathbb X)$ admits an entire signature expansion: there exist  deterministic  coefficients $(\bm u_s)_{0 \leq s \leq T}$ such that
$u(t,\mathbb X)=\langle \bm u_t,\mathbb X\rangle,$
and the resulting  series {$\langle \bm u_t,\mathbb X\rangle$ contains infinitely many non-zero terms and} converges for every group-like element $\mathbb X$. Furthermore, we show that $\bm u$ is the unique solution  of the following linear equation on the extended tensor algebra
\begin{equation}\label{eq:introheat_eq}
\begin{cases}
 \dot{\bm u}_t + \mathscr L \bm u_t = 0, \\
\bm u_T = \shuexp{\bm p},
\end{cases}
\end{equation}
where $\exp^{\shuprod}$ denotes the shuffle exponential and  $\mathscr{L}$ is  the diffusion generator of the Brownian signature defined by
$\mathscr{L}\,\bell = \bell\proj{\word{0}} + \frac{1}{2}\bell\proj{\word{1}\word{1}}.$ Here $\proj{\word i}$ denotes the right-shift operator in the letter $\word i$, and acts as a derivation on the tensor  algebra, playing the role of differentiation on power series coefficients. We state an informal version combining Theorems~\ref{theorem:analyticity_general} and \ref{theorem:existence_heat_eq} and Corollary  \ref{cor:uniqueness_heat}.

\textbf{Theorem 1.}
\textit{Let $\bm p \in \mathcal B$. Then, there exists a deterministic family $(\bm u_s)_{0\le s\le T}$ such that
$$
\mathbb E\!\left[
\exp\!\big(\langle \bm p,\sigW[T]\rangle\big)
\Big|\sigW[t]
\right]
=
\langle \bm u_t,\sigW[t]\rangle,
\qquad t\in [0,T].
$$
Furthermore, $\bm u$ is the unique solution (within a suitable class) to the linear equation \eqref{eq:introheat_eq}.}

Our second main result passes to the logarithm and establishes a local analytic signature expansion
$\log u(t,\sigW[t]) = \langle \bpsi_t,\sigW[t]\rangle,$
valid on a random stochastic interval, for a deterministic family of coefficients $(\bpsi_t)_{0\le t\le T}$. We show that these coefficients solve a Riccati equation on the tensor algebra, namely
\begin{equation}\label{eq:introriccati_eq}
\begin{cases}
\dot{\bpsi}_t + \mathscr{L}\bpsi_t + \frac{1}{2} (\bpsi_t
\proj{\word{1}})\shupow{2} = 0, \\
\bpsi_T = \bm p. 
\end{cases}
\end{equation} 
The nonlinearity in \eqref{eq:introriccati_eq} is induced by the shuffle product structure. This equation may be viewed as the tensor-algebra analogue of the classical Riccati equations arising in affine transform methods and yields more precise and stable numerical results than the linear equation.
Our main Theorems~\ref{theorem:log_analyticity_general} and \ref{theorem:existence_riccati} and Proposition~\ref{prop:riccati_uniqueness} are summarized in the following informal statement.

\textbf{Theorem 2.} 
\textit{Let $\bm p \in \mathcal B$. Assume that $u$ does not vanish. Then, there exists a deterministic family $(\bm \psi_s)_{0\le s\le T}$ such that
\begin{equation}\label{eq:log-Laplace_expansion}
    \log \mathbb E\!\left[
\exp\!\big(\langle \bm \bp,\sigW[T]\rangle\big)
\Big|\sigW[t]
\right]
=
\langle \bm \psi_t,\sigW[t]\rangle,
\qquad  t \in \llbracket 0\,, \tau \rrbracket\,,
\end{equation}
where $\tau$ is a  positive stopping time.
Furthermore, $\bm \psi$ is the unique solution (within a suitable class) of the Riccati equation \eqref{eq:introriccati_eq}}.

Theorem 2   shows that the time-augmented Brownian signature exhibits a generalized affine--Riccati duality on the tensor algebra: the logarithm of its conditional transforms is affine in the signature, while the coefficients satisfy Riccati equations. 

 A fundamental departure from classical affine theories is that this affine structure is intrinsically local. Although the Fourier–Laplace transform admits an entire signature expansion, we show that its logarithm generally does not. More precisely, even if $\bp$ has finitely many non-zero components, the series $ \langle \bpsi_t, \widehat{\mathbb W}_t \rangle$
typically involves infinitely many non-zero terms (except when $\bp$ is of linear-quadratic form in $\word{1}$). This raises non-trivial convergence issues for the series.  In general, we show that the radius of convergence is finite, see Theorem~\ref{theorem:zeros_quartic_exp_main}. As a consequence, the logarithm of the conditional Fourier--Laplace transform is analytic but not entire, yielding only a local expansion near the initial time, rather than a global representation on $[0,T]$.

To overcome this intrinsic locality, we introduce a recentering procedure based on Chen’s identity and the independence of Brownian increments. This leads to a new class of randomized Riccati equations with path-dependent terminal conditions, where the deterministic terminal condition in \eqref{eq:introriccati_eq} is replaced by a random one depending on the Brownian position at time $t$. This yields global representations of the logarithmic transform. More precisely, we summarize the findings of Theorem~\ref{theorem:recentering}.

\textbf{Theorem 3.} 
\textit{Let $\bm p \in \mathcal B$. Assume that $u$ does not vanish. Fix $t\in [0,T]$. Then, 
$$
\log \mathbb E\!\left[
\exp\!\big(\langle  \bp,\sigW[T]\rangle\big)
\Big|\sigW[t]
\right]
= (\bm{\psi}^{\sigW[t]}_t)^{\emptyword},
$$
 where the random family $(\bm{\psi}^{\sigW[t]}_s)_{s\leq T}$ solves the same Riccati equation \eqref{eq:introriccati_eq} but where the terminal condition is replaced by a  random one determined by $\bp$ and $\sigW[t]\,$.}

  In contrast with the local expansion in Theorem~2 that involves an infinite series, the  representation in Theorem~3 involves only the component associated with the empty word. The price to pay is that, for each $t$, one must solve a new Riccati equation, as opposed to solving a single deterministic equation in the local representation and using it for all times.

\subsection{Motivation and related literature}
Our motivation for studying such expansions stems from the growing role of signatures in stochastic analysis. Signatures, that is, sequences of iterated integrals introduced by \cite*{chen1957integration, chen1977iterated}, have long been central in stochastic Taylor expansions for Markovian systems  \citep*{kloedenstochastic, arous1989flots} and for non-Markovian ones \citep*{litterer2014chen,  dupire2022functional, abi2024path}, and have gained renewed importance with the development of rough path theory initiated by  \cite{lyons1998differential}.  A key feature of signatures is that they play a role analogous to polynomials on path space, allowing for a linearization of nonlinear functionals  and serving as universal approximators. This has led to increasing interest in the characterization of their laws and in the use of signatures for stochastic control problems in non-Markovian environments.

In this context, the representation of the Fourier--Laplace transform and its logarithm for signatures has emerged as a central but challenging problem. Partial results have been obtained in specific settings: series expansions appear in \cite*{cuchiero2025signaturesdesaffinepolynomial} for signature SDEs, in  \cite*{abijaber2024fourier} for integrated polynomial Ornstein-Uhlenbeck models and in \cite*{abi2025signature} for signature volatility models. In these works, first  connections with the linear equation \eqref{eq:introheat_eq} and the Riccati equation \eqref{eq:introriccati_eq} are identified, relying on affine and polynomial type algebraic structures of the signature, mainly assuming existence of solutions to   \eqref{eq:introheat_eq}--\eqref{eq:introriccati_eq} and convergence of the series, except in very specific Markovian Brownian settings that we detail in the next paragraph. 
A connection between the Riccati equation \eqref{eq:introriccati_eq} and  non-Markovian stochastic control has also been  established in \cite*{abijaber2025signature}, where existence and convergence of the series are also assumed. Related representation results for characteristic functions have appeared in \cite*{lyons2024pde}, where a PDE viewpoint on signatures is developed, linking expectations of signature functionals to infinite-dimensional evolution equations. In addition, \cite*{friz2022forests} develop alternative algebraic and combinatorial expansions for the logarithm of characteristic functions, based on generalized forest expansions, {and \cite*{chevyrev2016characteristic} expand characteristic functions of the type $\E[\exp(i \lambda X)]$ in the Fourier factor $\lambda\,$, for $X$ a random signature, in the context of the moments problem.} As demonstrated by \cite*{abi2025signature}, numerically solving the Riccati equation is a promising approach for computing conditional distributions in challenging non-Markovian settings, where PDE-based methods typically fail. This is particularly relevant for applications in mathematical finance.

However, a general theory ensuring existence and uniqueness for such infinite-dimensional linear and Riccati equations, together with convergence of the associated series expansions, remains largely open.  Partial results are available in the literature but only in Markovian settings where additional regularization is present. For instance, \cite*[Section 5.3.1]{cuchiero2025signaturesdesaffinepolynomial} derive series expansions  for the Fourier--Laplace transform and its logarithm for polynomial functions of $W_T$, which can be seen as a version of Theorems 1 and 2 above in the Markovian case. A representative example is {$p(W_T)=-W_T^4$}, which corresponds to a linear functional of the signature associated with the multi-index {$\bp = -4!\cdot \word{1111}$}. In this case, since the functional depends only on the current value $W_T$, the proof crucially relies on the regularization induced by the Gaussian density. Analogous local expansions are obtained in \cite*[Section 3.2]{abijaber2024fourier} for Fourier--Laplace transforms of integrated functionals of the form$
\int_0^T p^2(X_s)ds$ and $\int_0^T p(X_s)\circ dW_s,$
where $X$ is an Ornstein--Uhlenbeck process, relying on Gaussian calculus. In the Brownian case with $p(W_s)=W_s^3$, these correspond to linear functionals of the signature \eqref{eq:introsig}  associated with multi-indices such as $\bp = {6!}\cdot \word{1111110}$ and $\bp = {3!} \cdot\word{1111}$. 

These existing approaches rely heavily on power series expansions for Brownian functionals, based on Gaussian calculus and explicit density arguments. These methods crucially exploit the fact that Gaussian densities are entire functions, which provide strong regularization properties. This mechanism breaks down when working with the full time-augmented signature \eqref{eq:introsig}, where no such global regularization is available. In our setting, regularity must instead come from the functional in the exponential, which requires the development of new analytical and algebraic ideas.

Our results {use a combination of algebraic, analytic, and probabilistic techniques to} address this limitation and provide series expansions together with  a novel representation for Fourier--Laplace transforms of signatures, including existence, uniqueness and  convergence results, thereby resolving a question that remained open in the literature. In particular,  our approach yields a global representation through the analysis of a novel Riccati equation with random terminal condition. 

Our paper connects to and complements the literature on affine processes and affine-transform techniques.
A central theme in such literature is the duality between affine structures and deterministic Riccati equations:  classical affine processes with finite-dimensional Riccati equations \citep*{duffie2002affine}, super Brownian motions with Riccati PDEs \citep{etheridge2000introduction}, and affine Volterra processes with Riccati--Volterra equations \citep*{abi2019affine}. In the same spirit,  the time-augmented Brownian signature enjoys a  generalized affine--Riccati duality on the tensor algebra: the logarithm of its conditional transforms is affine in the signature with  coefficients satisfying Riccati equations. Compared to the literature, the affine structure is inherently local here, and global representations are recovered only via a randomized Riccati formulation. At the same time, the framework goes beyond standard affine settings by incorporating both nonlinear and path-dependent features.

\subsection{Strategy and implications}
Our starting point is to return to the one-dimensional
power-series setting in order to isolate the structural ingredients that survive beyond
the Markovian framework. We start by developing  in Section~\ref{section:heuristices}  an approach based on algebraic properties
of polynomials rather than on the smoothing effect of Gaussian densities, which is specific
to the one-dimensional Markovian setting and does not extend to time-augmented signatures.
In this polynomial setting, the key algebraic observation is that for any random variable $Y$, one
can factorize
\begin{align}\label{eq:intropfact}
  \exp({p(x+Y)})
  =
  \exp({p(Y)})
  \exp\!\big(p(x+Y)-p(Y)\big),
\end{align}
thereby separating the leading contribution $p(Y)$ from the terms involving the
deterministic variable $x$. Expanding only the second factor as a power series yields an
entire expansion whose coefficients are random functions of $Y$. The crucial step is then
to show that the decay induced by the leading term $\exp({p(Y)})$ dominates the growth of the
remaining coefficients, for polynomials $p$ with real part bounded from above, allowing
one to interchange expectation and infinite summation, which yields that the map
$x\mapsto \mathbb{E}[\exp({p(x+Y)})]$ is entire in $x$.

{We then extend this algebraic strategy to the time-augmented signature of Brownian motion.
In a first step, we construct in Section~\ref{subsection:class_of_coefs} a class
$\mathcal{B}$ of admissible coefficients  $\bp$ for which the real part of
$\langle \bp, \widehat{\mathbb{W}}\rangle$ is bounded from above. The central tool
for our bound is the use of \cite{Lyndon1954} words. More precisely using the fact that   the shuffle algebra is generated by Lyndon words thanks to the \cite{Radford1979ANR} theorem, we  control arbitrary signature
coordinates by leading powers of $|W_T|^N$ and $\int_0^T |W_s|^N \, ds$, see
Lemma~\ref{lem:sig_bound}. Then, in Section~\ref{subsection:signature_expansion_u}, for
$\bp \in \mathcal{B}$, we combine this control with the properties of
shuffle-compatible norms to obtain an entire signature expansion of the conditional
Fourier--Laplace transform, extending the factorization idea in~\eqref{eq:intropfact},
see Theorem~\ref{theorem:analyticity_general}. Passing to the shuffle logarithm, we
construct in Section~\ref{subsec:existence_Riccati_stopping_time} a local signature
expansion of the log-Fourier--Laplace transform given by
Theorem~\ref{theorem:log_analyticity_general}. Unlike the Fourier--Laplace transform
itself, its logarithm is generally not entire. We show in
Section~\ref{sect:global_exp_counterexample} that this locality is structural and
originates from the appearance of zeros of the transform outside the quadratic Gaussian
setting.

We then show in Section~\ref{sect:heat_and_riccati_sig} that the resulting coefficients
solve infinite-dimensional differential equations on the tensor algebra. We start by
proving in Section~\ref{subsec:existence_heat_eq}, using It\^{o}'s formula on signature
elements, that the coefficients of the conditional Fourier--Laplace expansion satisfy a
linear equation on the tensor algebra (Theorem~\ref{theorem:existence_heat_eq}). For
a sufficiently regular terminal condition, this linear equation admits an explicit
closed-form solution. However, this is no longer the case for the Fourier--Laplace
transform, and we show in Section~\ref{sect:explicit_sol_heat} that the closed-form
expressions are no longer available. We then establish uniqueness of the solution within a
certain class in Theorem~\ref{subsec:uniqueness_heat}, see
Section~\ref{subsec:uniqueness_heat}. We then prove in
Section~\ref{subsec:existence_Riccati} that the coefficients of the log-Fourier--Laplace
transform expansion satisfy a Riccati equation, see
Theorem~\ref{theorem:existence_riccati}. In Section~\ref{section:recentering}, a
recentering procedure for the Riccati equation is proposed, allowing us to recover a
global representation of the log-Fourier--Laplace transform by solving multiple Riccati
equations, as stated in Theorem~\ref{theorem:recentering}.
}

Finally, in Section~\ref{sect:laplace_sig_mart},  we apply our results, namely Theorems~2 and~3 above, to derive a representation of the joint Fourier--Laplace transform of
$\int_0^T \langle \sigma,\sigW[t]\rangle\, dW_t$
and its quadratic variation. As a consequence, we obtain in Corollary~\ref{cor:sig_vol_Fourier_laplace} the joint Fourier--Laplace transform of the log-price process and integrated variance in the signature volatility model introduced in \cite{abi2025signature}. In \cite[Theorem~4.1]{abi2025signature}, such a representation was obtained under the assumption that the associated Riccati equation admits a solution and that the corresponding signature series converges globally. Verifying these assumptions was left as an open problem. Our results provide a complete justification of the transform formula and identify the precise mechanism behind its validity. In particular, we show that the signature expansion of the log-transform is intrinsically local, while global representations are recovered only through the randomized Riccati equation that we introduce here. Remarkably, the required assumptions reduce to a structural condition on the signature volatility model ensuring that certain Doléans--Dade exponentials of signatures are true martingales, as obtained by \cite*{jaber2025martingalepropertymomentexplosions}. In particular, our class of admissible coefficients $\mathcal B$ is consistent with, and naturally contains, the coefficients arising in this framework.

Beyond signature volatility models, our results provide a new approach to conditional transforms beyond the classical affine framework and in genuinely non-Markovian settings, where PDE methods are typically unavailable.  We also expect the methodology to be useful in stochastic control, where related signature expansions have recently appeared in \cite*{abijaber2025signature}; we illustrate the applicability of Theorems 2 and 3  to optimal control problems with signatures in   the companion paper~\cite*{riccatisigcontrol}.

{\textbf{Outline.} The paper is outlined as follows. In Section~\ref{section:heuristices}, we
illustrate the main ideas of our approach in the power series case.
Section~\ref{section:preliminaries} presents path signatures and some necessary
preliminary results. Section~\ref{section:sig_expansions} constructs the expansions of
the conditional Fourier--Laplace transform and its logarithm in the Brownian signature
framework. In Section~\ref{sect:heat_and_riccati_sig}, we show that the coefficients of
these expansions satisfy infinite-dimensional linear and Riccati equations respectively, and
discuss the existence and uniqueness of solutions to these equations.
Section~\ref{sect:laplace_sig_mart} deals with the joint Laplace transform of
$\int_0^T \langle \sigma, \sigW[t]\rangle\, dW_t$ and its quadratic variation, using the
results of the two previous sections. Appendix~\ref{sect:proof_of_sig_bound} contains the
proof of the signature bounds and minimal preliminaries on Lyndon words.
Appendix~\ref{app:existence_zeros} presents the expansion of the conditional expectation,
showing that the Fourier--Laplace transform may vanish on the complex plane.
}

\section{The power series case: heuristics and core ideas}\label{section:heuristices}
Throughout this paper, $(\Omega, \F,(\F_t)_{t \geq 0}, \P)$ denotes a filtered probability space supporting a one-dimensional Brownian motion $W = (W_t)_{t \geq 0}$ generating the filtration $(\F_t)_{t \geq 0}$.

In this section, we focus on the heuristics and core ideas in the one-dimensional power series framework, with the aim of isolating the main structural ingredients before extending them to the full signature setting. The key idea is to develop arguments based on algebraic properties that do not rely on the standard smoothing effect of Gaussian densities, which is specific to the one-dimensional Markovian setting and cannot be extended to the time-augmented signature framework, in contrast to more algebraic strategies.

We consider the case where
\begin{align}\label{eq:ppowerseries}
\bp = \sum_{k = 0}^N p^k \underbrace{\word{1}\cdots\word{1}}_{\text{$k$ times}} , 
\end{align}
with $N \in \mathbb{N}$ and coefficients $p^k \in \mathbb{C}$. For this choice, linear functionals of the time-augmented signature reduce to  standard polynomials of one variable of degree $N$:
\begin{equation}\label{eq:entire_series}
    \bracket{\bp}{\sigW} = p(W_t) \quad   \text{with} \quad 
    p(x) := \sum_{k = 0}^N p^k \frac{x^k}{k!},
\end{equation}
where, by a slight abuse of notation throughout this section, the superscript $k$ in $p^k$ refers to the $k$-th coefficient, whereas in $x^k$ it denotes exponentiation, i.e.~the $k$-th power of $x$. Exponentiation will only be used with the variable $x$.

Our two primary objects of interest are the Fourier--Laplace transform $u$ and its logarithm $\psi$ given by
\begin{equation}\label{eq:u_psi_polyn}
    u(t, x) := \mathbb{E}\left[\exp\left(p(W_T)\right) \big| W_t = x\right], \quad \psi(t, x) := \log(u(t, x)),
\end{equation}
where the latter is defined provided that $u(t, x) \neq 0$.
The function $u$ satisfies the heat equation
\begin{equation}\label{eq:heat_eq_markov}
\begin{aligned}\begin{cases}
    \partial_t u(t, x) + \dfrac12\partial_{xx} u(t, x) = 0\,, \quad t \in [0, T]\,, \quad x \in \mathbb{R}\,, \\
    u(T, x) = \exp(p(x))\,,\quad  x \in \mathbb{R}\,,
\end{cases}
\end{aligned}
\end{equation}
while $\psi$ is a solution to the Hamilton--Jacobi--Bellman (HJB)  equation
\begin{equation}\label{eq:riccati_eq_markov}
\begin{aligned}\begin{cases}
    \partial_t \psi(t, x) + \dfrac12\partial_{xx} \psi(t, x) + \dfrac{1}{2}(\partial_x \psi(t, x))^2 = 0\,,\quad t \in [0, T]\,, \quad x \in \mathbb{R}\,, \\
    \psi(T, x) = {p(x)}\,, \quad x \in \mathbb{R}\,,
\end{cases}
\end{aligned}
\end{equation}
which follows from the Cole--Hopf transform.

We refer to the mapping $u$ in \eqref{eq:u_psi_polyn} as the ``Fourier--Laplace transform''. This terminology is justified by identifying a  polynomial $p(x)$, or more generally an entire series, with the sequence of its coefficients $\bm{p} = (p^0, p^1, \ldots) \in \mathbb{C}^{\mathbb{N}}$ and interpreting the evaluation map of the polynomial or series as a complex linear functional
$$
p(x) = \langle \bm{p}, \mathbbm{x} \rangle := \sum_{k = 0}^{+\infty} p^k \dfrac{x^k}{k!},
$$
applied to the infinite sequence 
$$
\mathbbm{x} = \left(1, x, \dfrac{x^2}{2!}, \ldots, \dfrac{x^n}{n!}, \ldots\right),
$$
which represents the signature of a one-dimensional path with increment $x$. The product of two such (convergent) series with coefficients $\bm{p}, \bm{q} \in \mathbb{C}^{\mathbb{N}}$ can be linearized via the Cauchy product:
\begin{equation}\label{eq:cauchy_product_linearization}
    \langle \bm{p}, \mathbbm{x} \rangle \langle \bm{q}, \mathbbm{x} \rangle = \langle \bm{p} * \bm{q}, \mathbbm{x} \rangle,
\end{equation}
where the coefficient $\bm{p} * \bm{q} \in \mathbb{R}^{\mathbb{N}}$ is defined by
\begin{equation}\label{eq:cauchy_product_def}
    (\bm{p} * \bm{q})^n = \sum_{k = 0}^n \binom{n}{k} p^k q^{n-k}, \quad n \in \mathbb{N}.
\end{equation}
For $n \in \mathbb{N}$, we define the $n$-th Cauchy power recursively by 
$
\bp^{*n} := \bp^{*(n-1)} * \bp,
$
with  $\bp^{*0} := (1, 0, 0, \ldots)$. Furthermore, we define the Cauchy 
exponential as
\begin{equation}\label{eq:cauchy_exp}
    \exp^*\left(\bp\right) := \sum_{n \geq 0} \frac{\bp^{*n}}{n!}.
\end{equation}

\subsection{Fourier--Laplace transform: an entire series expansion} \label{sect:expansion_u_power_ser}
We aim to show that $u(t, \cdot)$ is an entire function:
\begin{equation}\label{eq:u_entire_series_exp}
    u(t, x)  = \sum_{k\geq 0} \bu_t^k \frac{x^k}{k!} = \bracket{\bu_t}{\mathbbm{x}} , \quad x \in \mathbb{R},
\end{equation}
for an infinite sequence of coefficients $\bu_t = (\bu_t^0, \bu_t^1, \ldots)$ satisfying the following infinite-dimensional ODE:
\begin{equation}\label{eq:heat_eq_coef_power_ser}
\begin{cases}
    \dot \bu_t + \dfrac{1}{2}\bu_t\proj{11} = 0, \quad t \in [0, T], \\
    \bu_T = \exp^*\left(\bp\right),
\end{cases}
\end{equation}
where we define the shift $\bell|_{\word{11}} = (\bell^2, \bell^{3}, \ldots)$.  We note that the ODE \eqref{eq:heat_eq_coef_power_ser} arises naturally when substituting the entire series expansion \eqref{eq:u_entire_series_exp} into the heat equation \eqref{eq:heat_eq_markov} and observing that differentiation with respect to $x$ corresponds to a shift of the entire series coefficients:
$$
\partial_{xx}u(t, x) = \sum_{k\geq 0} \bu_t^{k+2} \frac{x^k}{k!} = \bracket{\bu_t\proj{11}}{\mathbbm{x}}.
$$
The claim that the solution $u$ to the heat equation is entire in $x$ for each $t \in [0, T]$ is not surprising; the representation \eqref{eq:u_psi_polyn} reads
\[
u(t, x) = \int_{\mathbb{R}} \exp({p(y)})\dfrac{1}{\sqrt{T-t}}\varphi\left(\dfrac{x - y}{\sqrt{T - t}}\right)\,dy, \quad t\in [0, T),\ x \in \mathbb{R},
\]
where $\varphi(z)=\frac{1}{{\sqrt{2\pi}}}\exp({-z^2/2})$ denotes the density of the standard normal random variable. Therefore, the solution $u(t, \cdot)$ inherits the regularity of the Gaussian density, and entireness can be proved under the assumption that  the polynomial $p$ satisfies $|\exp({p(\cdot)})| \leq \exp({a(|\cdot| + 1)})$ for some $a \in \mathbb{R}$ (see, e.g., \cite[Section~5.3]{cuchiero2025signaturesdesaffinepolynomial}).

This regularization effect is, however, very specific to the  Markovian setting and does not extend to the fully path-dependent framework associated with the signature of the time-augmented Brownian motion. There are two main challenges. First, one can no longer rely on the regularity of the density of the truncated signature $\sigW^{\leq N}$. In particular, \citet{zienkiewicz2003Noteanalyticregularity} shows that the corresponding heat kernel fails to be real-analytic as soon as $N \geq 3$. Second, the path-dependent heat semigroup does not regularize the dependence on the terminal condition in the sense of functional derivatives. In particular, discontinuities of the terminal condition are propagated by the semigroup, as illustrated by the example below.


\begin{sqexample}
    Consider 
    \[
    u(t, (X_s)_{s \in [0, t]}) = \mathbb{E}\Bigl[ \mathbf{1}_{\left\{\sup_{u \in [0, T]} W_u \ge 1 \right\}} \,\Big|\, W_s = X_s, \ s \in [0, t] \Bigr].
    \]
    Let 
    \[
    X^\varepsilon_s := 4(1 - \varepsilon) \frac{s}{t}\left(1 - \frac{s}{t}\right) \,, \quad s \in [0,t].
    \]
    Clearly, $X^\varepsilon \to X^0$ in the uniform topology as $\varepsilon \to 0$. {However, for all $\varepsilon > 0$,
    \[
    u(t, (X^{\varepsilon}_s)_{s \in [0,t]}) = \mathbb{P}\Bigl(\sup_{s \in [t,T]} W_{t, s} > 1\Bigr) < 1\,,
    \]
    while 
    \(
    u(t, (X_s^0)_{s \in [0,t]})) \equiv 1\,.
    \) Hence, the functional $u(t, \cdot)$ is not continuous for the uniform topology.}
\end{sqexample}

Therefore, in the absence of both an analytic density and a smoothing property for the path-dependent heat equation, our approach is instead to exploit directly the regularity of the terminal condition $\exp(p(x))$ and to reformulate the argument in algebraic terms so that it can be extended to the signature setting. The key point is that the argument does not fundamentally rely on the density. The following lemma, which holds for an arbitrary real-valued random variable $Y$ and does not {require} Gaussianity, illustrates the main idea underlying our approach. This argument will later be generalized in Lemma~\ref{lemma:enormous_bound}.

More specifically, the argument exploits the polynomial structure of $p$. We first expand $p(x+Y)$ as a polynomial in the deterministic variable $x$, with random coefficients depending on $Y$. This allows us to isolate the constant term $p(Y)$  from the higher-order terms in $x$. We then keep $\exp({p(Y)})$ in exponential form and treat the remaining part as a formal power series in $x$ using the Cauchy product. This yields an entire expansion in $x$ with random coefficients depending on $Y$. The key point then is to identify a suitable leading-order contribution of $p(Y)$ that dominates the modulus of the remaining terms, in order to justify the interchange of expectation with the infinite series, yielding the desired entire representation.


\begin{lemma}\label{lemma:enormous_bound_lite}
  Let $Y$ be a real-valued random variable. If the degree of the polynomial $N$ is even with leading coefficient $p_N$ such that  $\Re(p_N) < 0$, then the map $x \mapsto \mathbb{E}\left[\exp({p(x + Y)})\right]$ is entire; that is, there exists a complex sequence $\bu = (\bu^0, \bu^1, \ldots)$ such that
    \begin{align}\label{eq:charY}
\mathbb{E}\left[\exp({p(x + Y)})\right] = \sum_{k\geq 0} \bu^k \frac{x^k}{k!} = \bracket{\bu}{\mathbbm{x}} \,, \quad x \in \R\,.
   \end{align}
\end{lemma}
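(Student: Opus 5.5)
The plan follows the factorization \eqref{eq:intropfact}. For fixed $Y$, write
\[
p(x+Y) = p(Y) + q_Y(x), \qquad q_Y(x) := p(x+Y)-p(Y) = \sum_{j=1}^N \frac{x^j}{j!}\, p^{(j)}(Y),
\]
where $p^{(j)}$ is the $j$-th derivative of $p$, so $p^{(j)}(Y)$ is a polynomial in $Y$ of degree $N-j$. Then $\exp(q_Y(x))$ is an entire function of $x$. Its Taylor coefficients are given by the Cauchy exponential $\exp^*(\bq_Y)$, where $\bq_Y = (0, p^{(1)}(Y), \ldots, p^{(N)}(Y), 0, \ldots)$. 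Hence pathwise
\[
\exp(p(x+Y)) = \sum_{k\ge 0} \exp(p(Y))\, \big(\exp^*(\bq_Y)\big)^k \frac{x^k}{k!}.
\]
The candidate coefficients are $\bu^k := \E[\exp(p(Y)) (\exp^*(\bq_Y))^k]$. It remains to justify exchanging $\E$ with the sum. By Fubini--Tonelli it suffices to show, for every $x\in\R$ (indeed every $|x|\le R$),
\[
\sum_{k} \E\Big[|e^{p(Y)}|\, |(\exp^*(\bq_Y))^k|\Big] \frac{|x|^k}{k!} < \infty .
\]
The same bound then also shows that the resulting series has infinite radius of convergence.

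The key step is a dominating bound. Replacing each coefficient by its modulus only increases the Taylor coefficients, because the Cauchy product has nonnegative binomial weights. This gives
\[
\sum_k |(\exp^*(\bq_Y))^k| \frac{r^k}{k!} \le \exp\Big(\sum_{j=1}^N |p^{(j)}(Y)| \frac{r^j}{j!}\Big),
\]
and the right-hand side is at most $\exp\big(C(1+r)^N(1+|Y|^{N-1})\big)$ for some constant $C$ depending only on $p$. Meanwhile $|e^{p(Y)}| = e^{\Re p(Y)}$, and since $N$ is even with $\Re(p_N)<0$, we have $\Re p(y) \le -c|y|^N + C'$ for some $c>0$. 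The whole integrand is therefore bounded, uniformly in $Y$, by
\[
\sup_{y\in\R} \exp\big(-c|y|^N + C' + C(1+r)^N(1+|y|^{N-1})\big) < \infty .
\]
The supremum is finite because the degree-$N$ decay beats the degree-$(N-1)$ growth. Consequently no integrability assumption on $Y$ is needed, which matches the lemma's generality for an arbitrary real $Y$.

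The main obstacle is this domination step, namely verifying that the growth of the random coefficients $p^{(j)}(Y)$ is of strictly lower order in $Y$ than the leading term $\Re p(Y)$. This is exactly where the hypotheses (even degree and negative real part of the leading coefficient) enter. Once the bound holds, Tonelli gives absolute convergence for every $x$, Fubini swaps $\E$ and $\sum_k$, and the entire expansion \eqref{eq:charY} follows.
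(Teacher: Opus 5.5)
Your proposal is correct and follows essentially the same route as the paper. You split off $p(Y)$, expand $\exp(p(x+Y)-p(Y))$ via the Cauchy exponential, and dominate $\sum_k |\bxi^k|\,|x|^k/k!$ by $\exp\bigl(\Re p(Y)+\sum_{j=1}^N |p^{(j)}(Y)|\,|x|^j/j!\bigr)$, which is bounded uniformly in $Y$ because the even-degree leading term with negative real part dominates. Fubini then gives $\bu=\E[\bxi]$; your explicit justification of the modulus bound through the nonnegative binomial weights simply fills in the step the paper calls ``straightforward but cumbersome.''
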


\begin{proof}
{The idea of the proof consists of rewriting $p(x+Y)$ as a polynomial in $x$ with coefficients that depend on $Y$:
\[
p(x + Y) = p_Y(x) = \sum_{k=0}^N p^{(k)}(Y)\dfrac{x^k}{k!} = \langle \bp|_{Y}, \mathbbm{x} \rangle,
\]
where $\bp|_{Y} = (p(Y), p'(Y), \ldots, p^{(N)}(Y), 0, \ldots)$. Note that the constant coefficient $(\bp|_{Y})^0 = p(Y)$ is itself a polynomial in $Y$ of degree $N$, while the other coefficients $(\bp|_{Y})^k = p^{(k)}(Y)$ for $k \geq 1$ are polynomials in $Y$ of degree $N-k$. We then split $p_Y(x)$ into the constant term $p(Y)$ and the remaining part: 
$$
p(x + Y) = p_Y(x) = p(Y) + \bar{p}_Y(x) = p(Y) + \langle \overline{\bp|_{Y}}, \mathbbm{x} \rangle,
$$
where $\overline{\bp|_{Y}} = (0, p'(Y), \ldots, p^{(N)}(Y), 0, \ldots)$ denotes the coefficients of $\bar{p}_Y = p_Y - p(Y)$. This allows us to isolate the dominant term in the exponential, while expanding $\exp({\bar{p}_Y(x)})$ into an entire series in $x$ using the Cauchy exponential \eqref{eq:cauchy_exp}:}
\begin{align}
\mathbb{E}\left[\exp({p(x + Y)})\right] = \mathbb{E}\left[\exp({p(Y) + \bracket{\overline{\bp|_{Y}}}{\mathbbm{x}}})\right] = \mathbb{E}\left[\bracket{\exp({p(Y)}) \exp^*(\overline{\bp|_{Y}})}{\mathbbm{x}}\right].
\end{align}
It remains to show that the expectation can be moved inside the bracket:
\begin{equation}\label{eq:expect_bracket_series}
    \mathbb{E}\left[\bracket{\exp({p(Y)}) \exp^*(\overline{\bp|_{Y}})}{\mathbbm{x}}\right] = \bracket{\mathbb{E}\left[\exp({p(Y)}) \exp^*(\overline{\bp|_{Y}})\right]}{\mathbbm{x}} =: \bracket{\bu}{\mathbbm{x}}.
\end{equation}
Setting $\bxi = \exp({p(Y)}) \exp^*\left(\overline{\bp|_{Y}}\right)$, it suffices to prove the convergence of the series 
\(
\mathbb{E}\left[\sum_{k = 0}^{\infty} |\bxi^k| \frac{|x|^k}{k!}\right]
\). A straightforward but cumbersome computation shows that 
\begin{equation}\label{eq:expectation_in_lem}
    \mathbb{E}\left[\sum_{k = 0}^{\infty} |\bxi^k| \frac{|x|^k}{k!}\right] \leq \mathbb{E}\left[\exp\left({\Re(p(Y)) + \sum_{k=1}^N|p^{(k)}(Y)|\frac{|x|^k}{k!}}\right)\right] < \infty,
\end{equation}
since the expression in the exponential is bounded from above, owing to the dominant term $p_N \frac{Y^N}{N!}$ with even $N$ and $\Re(p_N) < 0$, and the fact that all $|p^{(k)}(Y)|$ have a degree strictly less than $N$. This proves that \eqref{eq:expect_bracket_series} holds with $\bu = \mathbb{E}[\bxi]$ and completes the proof.
\end{proof}


We stress that the series in \eqref{eq:charY} contains infinitely many nonzero terms, even though $p$ has finite degree.

An important aspect of the strategy used in Lemma~\ref{lemma:enormous_bound_lite} is that it extends naturally to the signature of the time-augmented Brownian motion. In this setting, the role of the polynomial expansion is replaced by the algebraic structure of Lyndon words, which allows us to isolate suitable leading-order contributions within the signature that dominate the remaining terms in the same sense as in the polynomial case. By exploiting this structure, we show that any finite linear functional of the time-augmented signature of a one-dimensional path can be controlled by two terms (Lemma~\ref{lem:sig_bound}). This control is then used to identify, in Subsection~\ref{subsection:class_of_coefs}, the class $\mathcal{B}$ of coefficients $\bp$ ensuring that the analogue of the integrability estimate in \eqref{eq:expectation_in_lem} holds. For $\bp \in \mathcal{B}$, we prove Lemma~\ref{lemma:enormous_bound}, which is the signature counterpart of Lemma~\ref{lemma:enormous_bound_lite}, together with the technical bounds required for subsequent arguments. This leads to Theorem~\ref{theorem:analyticity_general}, which shows that the Fourier--Laplace transform of the time-augmented signature is entire and establishes uniform boundedness of the corresponding coefficients in time.
 
Combining Lemma~\ref{lemma:enormous_bound_lite} applied to $Y = W_T - W_t := W_{t,T}$ with Itô's formula,  we obtain that the corresponding coefficients $\bu_t$ appearing in \eqref{eq:charY} satisfy the equation~\eqref{eq:heat_eq_coef_power_ser}. This is collected in the following theorem, which provides a simplified power series formulation of the more general Theorem~\ref{theorem:existence_heat_eq} for time-augmented signatures.

\begin{theorem}\label{theorem:entireness_polynomial}
    Let $p$ satisfy the assumptions of Lemma~\ref{lemma:enormous_bound_lite}. Then, the Fourier--Laplace transform $u(t,\cdot)$ defined in \eqref{eq:u_psi_polyn} is entire in $x \in \mathbb{R}$, meaning that \eqref{eq:u_entire_series_exp} holds for some deterministic coefficients $(\bu_t)_{t \in [0, T]}$. Furthermore,  $(\bu_t)_{t \in [0, T]}$ satisfies \eqref{eq:heat_eq_coef_power_ser}.
\end{theorem}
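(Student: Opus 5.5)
By the Markov property and independence of Brownian increments, $u(t,x)=\mathbb{E}[\exp(p(x+W_{t,T}))]$ with $W_{t,T}:=W_T-W_t$ independent of $\F_t$. Applying Lemma~\ref{lemma:enormous_bound_lite} with $Y=W_{t,T}$ then gives \eqref{eq:u_entire_series_exp} with $\bu_t=\mathbb{E}[\bxi_t]$, where $\bxi_t=\exp(p(W_{t,T}))\exp^*(\overline{\bp|_{W_{t,T}}})$. At $t=T$ we have $Y=0$, so $\bu_T=\exp(p(0))\exp^*(\overline{\bp|_0})=\exp^*(\bp)$: indeed $\bp|_0=\bp$, and the constant component factors out of the Cauchy exponential. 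This settles entireness and the terminal condition.

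For the ODE, I would avoid a formal Itô argument on the series and work at the level of entire functions. The proof of Lemma~\ref{lemma:enormous_bound_lite} gives the bound
\[
\sum_k |\bu_t^k|\,\frac{|x|^k}{k!}\le \mathbb{E}\Big[\exp\Big(\Re p(Y)+\sum_{k=1}^N |p^{(k)}(Y)|\tfrac{|x|^k}{k!}\Big)\Big].
\]
The exponent is bounded above by a constant $C(x)$ that does not depend on the value of $Y$, because the even leading term with $\Re(p_N)<0$ dominates all lower-degree terms pointwise in $Y$. Hence this bound is uniform in $t\in[0,T]$, locally uniformly in $x$. Consequently $u(t,\cdot)$ is entire with $\partial_x^k u(t,0)=\bu_t^k$, and $\partial_{xx}u(t,\cdot)$ has coefficient sequence $\bu_t\proj{11}$ by termwise differentiation.

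Next I would identify the time derivative. For $t<T$, $u$ is given by Gaussian convolution and is smooth, and it solves the heat equation \eqref{eq:heat_eq_markov}. To connect this with the coefficients without appealing to the density, apply Itô's formula to $s\mapsto \exp(p(x+W_{t,s}))$, which gives $u(t,x)=\exp(p(x))+\int_t^T \tfrac12\,\mathbb{E}[\partial_{xx}\exp(p(x+W_{t,s}))]\,ds$. The stochastic integral has zero mean thanks to the integrability bound above. Differentiating $k$ times in $x$ at $x=0$, using Cauchy estimates on the entire functions $\mathbb{E}[\exp(p(\cdot+W_{t,s}))]$ uniformly in $s$, together with $\mathbb{E}[\exp(p(x+W_{t,s}))]=\mathbb{E}[\exp(p(x+W_{s,T}'))]$ in law (time-homogeneity, $s\mapsto T-s+t$), yields
\[
\bu_t^k=\exp^*(\bp)^k+\tfrac12\int_t^T \bu_s^{k+2}\,ds.
\]
This is the integral form of $\dot\bu_t+\tfrac12\bu_t\proj{11}=0$, componentwise.

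The main obstacle is justifying the interchange of $x$-differentiation with expectation and time integration uniformly in $s$. I expect the uniform-in-time majorant above, which stays finite as the variance $T-s$ ranges over $[0,T]$, to handle this via dominated convergence and Cauchy's integral formula on a disc in $\mathbb{C}$. For that step I would extend the bound to complex $x$ in the same way, since $|\exp(\bar p_Y(x))|\le \exp\big(\sum_k |p^{(k)}(Y)|\,|x|^k/k!\big)$.
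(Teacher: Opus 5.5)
Your proposal is correct and is essentially the paper's argument: Lemma~\ref{lemma:enormous_bound_lite} with $Y=W_{t,T}$ gives $\bu_t=\mathbb{E}[\bxi_t]$ with a majorant uniform in $t$, and the equation comes from It\^o's formula on forward increments combined with the equality in law $W_{t,s}\overset{d}{=}W_{T-s+t,T}$, which is exactly the time-reversal step in the proof of Theorem~\ref{theorem:existence_heat_eq}. Your identification of coefficients via Cauchy estimates at $x=0$ is the power-series counterpart of the paper's Fubini argument, which uses the shift bounds of Lemma~\ref{lem:enormous_bound_shift} and the coefficient-uniqueness Lemma~\ref{lem:uniq_coef}; just note that the pointwise $C^1$ form of the equation follows by bootstrapping, since the integral identity at level $k+2$ already makes $s\mapsto\bu_s^{k+2}$ Lipschitz, hence continuous.
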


The reader has, of course, noticed that the equation~\eqref{eq:heat_eq_coef_power_ser} satisfied by the coefficients $\bu_t$ is linear, and may wonder if it can be solved in closed form. Indeed, denoting $\mathscr{L}\colon \bu \mapsto \frac{1}{2}\bu\proj{11}$, one may expect the solution to be given by the operator exponential
\begin{equation}
    \bu_t = e^{(T - t)\mathscr{L}}\bu_T.
\end{equation}
As we show in Subsection~\ref{sect:explicit_sol_heat}, this is indeed the case for a certain, relatively small class of terminal conditions, but it does not hold in general. In particular, this approach fails in our case of interest $\bu_T = \exp^*\left(\bp\right)$, when $\deg(\bp) > 2$, as demonstrated by the following example.

\begin{sqexample}\label{ex:eplicit_sol_pol}
    Consider $p(x) = -x^4$. In this case,
    \begin{align*}
        u(T, x) = \exp({-x^4})=\sum_{n=0}^\infty \frac{(-1)^n}{n!} x^{4n} = \sum_{n=0}^\infty \frac{(-1)^n (4n)!}{n!} \frac{x^{4n}}{(4n)!} = \bracket{\bu_T}{\mathbbm{x}},
    \end{align*}
    with $\bu_T^{4n} = \frac{(-1)^n (4n)!}{n!}$ and $\bu_T^k = 0$ for $k \notin 4\mathbb{N}$.
    Let us compute the zeroth coefficient of $e^{T\mathscr{L}}\bu_T$:
    \begin{align*}
        (e^{T\mathscr{L}}\bu_T)^0 = \sum_{m=0}^\infty \frac{T^m}{m!} (\mathscr{L}^m \bu_T)^0 
        = \sum_{m=0}^\infty \frac{T^m}{m! 2^m} \bu_T^{2m} = \sum_{n=0}^\infty \frac{T^{2n}}{(2n)! 2^{2n}} \frac{(-1)^n (4n)!}{n!}.
    \end{align*}
    The last series is divergent, whereas $\bu_0^0 = u(0, 0) = \E[\exp({-W_T^4})] < \infty$. 
\end{sqexample}

    \begin{sqremark}
    \label{remark:non_uniqueness_Tychonoff}
        One cannot expect to obtain uniqueness for solutions of such an equation. A classical counter-example by \cite{tychonoff1935theoremes} shows that $(t,x) \mapsto \sum_{n \geq 0} g^{(n)}(t) \frac{x^{2n}}{(2n)!}\,$, with $g(t) := \exp(-1 / t^2)\,$, is a solution to the heat equation with null initial condition. Moreover, this power series has an infinite radius of convergence. Hence, $\bm u$ is not unique, even under the assumption that $\bracket{\bm u_t}{\mathbbm  x}$ converges for all $t \in [0\,, T]$ and $x \in \R\,$. However, we establish uniqueness of the solution $\bu_t$ to the linear equation when $\bracket{\bu_t}{\mathbbm{x}}$ satisfies certain growth conditions. This result is presented in Subsection~\ref{subsec:uniqueness_heat}.
    \end{sqremark}

\subsection{Log-Fourier--Laplace transform: an analytic series expansion}
\label{subsec:power_series_psi}
We now turn to the log-Fourier--Laplace transform $\psi(t, x) = \log(u(t, x))$. Our goal is to deduce from the series expansion \eqref{eq:u_entire_series_exp} of $u$ the existence of a sequence of coefficients $\bpsi_t = (\bpsi_t^0, \bpsi_t^1, \ldots)$ such that
\begin{equation}\label{eq:psi_entire_series_exp}
    \psi(t, x) = \sum_{k\geq 0} {\bpsi^k_t} \frac{x^k}{k!} = \bracket{\bpsi_t}{\mathbbm{x}},
\end{equation}
and to show that they satisfy the infinite-dimensional Riccati equation
\begin{equation}\label{eq:riccati_coef_power_ser}
\begin{cases}
    \dot \bpsi_t + \dfrac{1}{2}\bpsi_t\proj{11} + \dfrac{1}{2}(\bpsi_t\proj{1})^{*2} = 0, \quad t \in [0, T], \\
    \bpsi_T = \bp.
\end{cases}
\end{equation}
If $u(t, 0) = \bu_t^0 \neq 0$, then $\psi(t, \cdot) = \log u(t, \cdot)$ is analytic in a neighborhood of $0$. We define its coefficients via
\begin{equation}\label{eq:psi_def_power_series}
    \bpsi_t := \log^*\bu_t = \log \bu_t^0 + \sum_{n \geq 1}\dfrac{(-1)^{n+1}}{n}\left(\dfrac{\bu_t - \bu_t^0}{\bu_t^0}\right)^{*n},
\end{equation}
 where the complex logarithm is defined so as to ensure the continuity of $t \mapsto \log \bu_t^0$, see Theorem~\ref{theorem:log_analyticity_general}. 
From this, we deduce the analyticity of the log-Fourier–Laplace transform in a neighborhood of $0$ summarized in Theorem~\ref{theorem:entire_series_riccati}, which corresponds to the power-series analogue of Theorem~\ref{theorem:existence_riccati} in the full signature setting. The proof consists of algebraic verification that $\bpsi_t$ satisfies the Riccati equation \eqref{eq:riccati_coef_power_ser} given that $\bu_t$ solves the linear equation and can be viewed as an algebraic analogue of the Cole–Hopf transform at the level of the power series coefficients. 
\begin{theorem}\label{theorem:entire_series_riccati}
    Assume that $p$ satisfies the assumptions of Lemma~\ref{lemma:enormous_bound_lite} and that the coefficients $\bu_t$ constructed as in Theorem~\ref{theorem:entireness_polynomial}, satisfy $\bu_t^0 \neq 0$ for $t \in [0, T]$. Let $\bpsi_t$ be defined by \eqref{eq:psi_def_power_series}. Then, $\bpsi_t$ is a solution to the Riccati equation \eqref{eq:riccati_coef_power_ser} and there exists $r > 0$ such that the expansion \eqref{eq:psi_entire_series_exp} holds for all $|x| \leq r$.
\end{theorem}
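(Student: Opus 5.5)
The proof has two parts: the local expansion \eqref{eq:psi_entire_series_exp}, which we treat analytically, and the Riccati equation \eqref{eq:riccati_coef_power_ser}, which we treat algebraically as a Cole--Hopf transform on the coefficient sequences.

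\textbf{Step 1: local expansion.} By Theorem~\ref{theorem:entireness_polynomial}, $x\mapsto u(t,x)=\bracket{\bu_t}{\mathbbm{x}}$ extends to an entire function of $x\in\C$. The estimate \eqref{eq:expectation_in_lem} holds uniformly in $t\in[0,T]$ and $|x|\le 1$, since the exponent is bounded above uniformly in $Y=W_{t,T}$ once $|x|\le 1$. Hence $(t,x)\mapsto u(t,x)$ is jointly continuous on $[0,T]\times\{|x|\le 1\}$. The map $t\mapsto \bu_t^0=u(t,0)$ is continuous and never vanishes on the compact interval $[0,T]$, so $m:=\min_t|\bu_t^0|>0$. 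Uniform continuity then gives $r>0$ such that $|u(t,x)-\bu_t^0|<m/2$ for all $|x|\le r$ and all $t$.

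On this disc the quantity $v_t(x):=(u(t,x)-\bu_t^0)/\bu_t^0$ satisfies $|v_t(x)|<1/2$. Therefore $\log u(t,x)=\log\bu_t^0+\sum_{n\ge1}\frac{(-1)^{n+1}}{n}v_t(x)^n$ converges uniformly on the disc. By \eqref{eq:cauchy_product_linearization}, each power satisfies $v_t(x)^n=\bracket{((\bu_t-\bu_t^0)/\bu_t^0)^{*n}}{\mathbbm{x}}$.

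It remains to rearrange the double series into a single power series in $x$, which requires absolute convergence. Let $\tilde v_t(\rho):=\sum_{k\ge1}|\bu^k_t|\rho^k/(k!\,|\bu^0_t|)$. Since $u(t,\cdot)$ is entire, $\tilde v_t$ is finite for every $\rho$. Moreover, a Cauchy estimate together with the uniform bound on $u$ over $|x|\le1$ shows that $\tilde v_t(\rho)\to0$ as $\rho\to0$, uniformly in $t$. Shrinking $r$ so that $\tilde v_t(r)<1$ makes the double series absolutely convergent. Its coefficients are exactly those of $\bpsi_t$ in \eqref{eq:psi_def_power_series}, which gives \eqref{eq:psi_entire_series_exp} for $|x|\le r$.

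We choose the branch of $\log\bu_t^0$ continuously in $t$, which is possible by the non-vanishing of $\bu_t^0$. This makes $\psi(t,x)=\log u(t,x)$ continuous in $(t,x)$.

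\textbf{Step 2: Riccati equation.} We work in the formal power series ring $(\C^{\N},*)$. The right shift $\proj{1}$ is a derivation for $*$: the identity $(\bp*\bq)\proj{1}=\bp\proj{1}*\bq+\bp*\bq\proj{1}$ follows from the binomial identity, and it mirrors $\partial_x$. The map $\exp^*$ and the map $\log^*$ defined on sequences with nonzero zeroth coefficient are mutually inverse. Applying the derivation to $\exp^*$ gives $(\exp^*\bpsi)\proj{1}=\bpsi\proj{1}*\exp^*\bpsi$, and applying it again gives
\[
(\exp^*\bpsi)\proj{11}=\big(\bpsi\proj{11}+(\bpsi\proj{1})^{*2}\big)*\exp^*\bpsi.
\]
Next, $t\mapsto\bu_t$ is coordinatewise $C^1$ and solves \eqref{eq:heat_eq_coef_power_ser}. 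Each coordinate $\bpsi_t^k$ is a polynomial in $\bu_t^1,\dots,\bu_t^k$ and $1/\bu_t^0$, plus the term $\log\bu_t^0$, so $\bpsi$ is coordinatewise $C^1$ and $\dot{\bu}_t=\dot{\bpsi}_t*\bu_t$. Substituting $\bu_t=\exp^*\bpsi_t$ into $\dot\bu_t+\frac12\bu_t\proj{11}=0$ yields
\[
\Big(\dot\bpsi_t+\tfrac12\bpsi_t\proj{11}+\tfrac12(\bpsi_t\proj{1})^{*2}\Big)*\bu_t=0.
\]
Since $\bu_t^0\ne0$, the sequence $\bu_t$ is invertible in $(\C^{\N},*)$, and this gives \eqref{eq:riccati_coef_power_ser}. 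The terminal condition is $\bpsi_T=\log^*\exp^*\bp=\bp$; the branch must be taken so that $\log\bu_T^0=p^0$, which is consistent because the branch is anchored at $t=T$.

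\textbf{Main obstacle.} The algebra in Step~2 is routine. The delicate point is Step~1: obtaining a radius $r$ that is uniform in $t$ and justifying the rearrangement of the logarithm series into a single power series.
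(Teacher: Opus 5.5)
Your proof is correct and takes the paper's route: the paper obtains this result as the power-series case of Theorems~\ref{theorem:log_analyticity_general} and~\ref{theorem:existence_riccati}. Your condition $\tilde v_t(r)<1$ is exactly $\sum_{k\ge0}|\bu_t^k|r^k/k!<2|\bu_t^0|$, the analogue of $\|\bu_t\|_{\mathbb{X}}<2|\bu_t^{\emptyword}|$, which you secure with a uniform coefficient bound instead of the paper's uniform continuity of $h(t,x)$, and your Step~2 is the same algebraic Cole--Hopf computation; the preliminary argument giving $|v_t(x)|<1/2$ is redundant, since $|v_t(x)|\le\tilde v_t(|x|)$.
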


\begin{sqremark}
Theorems~\ref{theorem:entireness_polynomial} and~\ref{theorem:entire_series_riccati} were proved in the entire power-series setting in \cite[Proposition 5.8]{cuchiero2025signaturesdesaffinepolynomial}. The argument there relies on regularity properties of the Gaussian density and does not readily extend to the signature setting. By contrast, our proof relies on a purely algebraic strategy as developed in  Lemma~\ref{lemma:enormous_bound_lite}. This algebraic viewpoint is crucial for extending the argument to the signature framework; see Sections~\ref{section:sig_expansions} and~\ref{sect:heat_and_riccati_sig}.
\end{sqremark}

We note that Theorem~\ref{theorem:entire_series_riccati} only yields a local expansion of $\psi(t,\cdot)$ around the origin. In particular, for the Fourier--Laplace transform
\begin{equation}\label{eq:laplace_illustration_series}
\log \mathbb{E}\left[\exp\left(p(W_T)\right)\Big| \mathcal{F}_t\right]
= \psi(t,W_t)
= \sum_{k\geq0} \psi_t^k \frac{W_t^k}{k!},
\end{equation}
the last equality is only guaranteed to hold on the event  $\{|W_t|<r\}$, that is, on the stochastic interval $[[0,\tau_r]]$, where
$$
\tau_r:=\inf\{t>0:\ |W_t|=r\}.
$$
One may wonder whether this locality is merely a limitation of our approach and whether a global representation can be obtained:
\begin{itemize}
    \item \textbf{The non-entireness of the logarithm:} we show that the first question has a negative answer. Indeed, the locality of the expansion is a genuine feature of the problem and stems from the nonlinear nature of the HJB equation \eqref{eq:riccati_eq_markov}. Although $u(t,\cdot)$ is entire, its logarithm can fail to be entire because $u(t,\cdot)$ may vanish in the complex plane. More precisely, even if $u(t,x)\neq 0$ for every $x\in\mathbb{R}$, there is in general no reason for its analytic continuation $u(t,z)$ to remain non-vanishing on $\mathbb{C}$.   By Little Picard's theorem, a non-constant entire function can omit at most one complex value. Therefore, for $\log u(t,\cdot)$ to be entire, the value $0$ must be omitted, which would force $u(t,\cdot)$ to be of the form $\exp(g)$ for some entire function $g$. Such a property is not satisfied in general. To illustrate this, we show in Theorem~\ref{theorem:zeros_quartic_exp_main}, that the map $
z \mapsto \mathbb{E}[\exp({-(z+W_{t,T})^4})]$
has infinitely many zeros on the imaginary axis. Consequently, its logarithm is analytic but not entire. The proof relies on a saddle-point analysis.
\item \textbf{A global representation:} while a global power-series expansion centered at the origin is generally impossible, we show that a  global representation can nevertheless be recovered by recentering the expansion. For any $x\in\mathbb{R}$,
\begin{equation}
\label{eq:psi_power_series_recentering}
p(x+W_{t,T})
= \sum_{n=0}^N p^{(n)}(x)\frac{W_{t,T}^n}{n!}
=: p_x(W_{t,T}),
\end{equation}
where $p_x$ is a polynomial whose leading coefficient coincides with that of $p$. Hence $p_x$ satisfies the assumptions of Lemma~\ref{lemma:enormous_bound_lite} whenever $p$ does, and all previous results apply with $p$ replaced by $p_x$. This yields
$$
\psi(t,x)=\psi^x(t,0)=(\bpsi_t^x)^0,
$$
where $\bpsi^x$ solves the same Riccati equation \eqref{eq:riccati_coef_power_ser} but  with the state-dependent terminal condition $$\bp_x = (p(x), p'(x), \ldots, p^{(N)}(x), 0, \ldots),$$
instead of  $\bp$. In this sense, the local representation can be transported to any point $x$, thereby yielding a global description of $\psi$ through a family of locally convergent expansions. This is extended in Subsection~\ref{section:recentering} to the more general signature framework.
\end{itemize}

\section{Preliminary results on signatures}\label{section:preliminaries}
This section develops the algebraic and analytic framework used throughout the paper. The central objects are path signatures, infinite sequences of iterated stochastic integrals, which naturally live in the extended tensor algebra. We introduce this structure, along with the tensor and shuffle products, respectively encoding the concatenation and the multiplication of iterated integrals, as well as the convergence tools required to give meaning to the infinite series that arise when computing dot products on the extended tensor algebra.

\subsection{Tensor algebra and path signatures}

\paragraph{Extended tensor algebra.} Let $\mathbb K$ denote either $\C$ or $\R\,$. {For $n\geq 0$, let $(\mathbb{K}^2)^{\otimes n}$ denote the $n$-th tensor power of $\mathbb{K}^2$, with $(\mathbb{K}^2)^{\otimes 0}=\mathbb{K}$. The symbol $\otimes$ denotes the tensor product. Iterated integrals of order $n$ of a two-dimensional path take value in $(\mathbb K^2)^{\otimes n}\,$.} Collecting all orders at once leads naturally to the \textit{extended tensor algebra}
$$
T((\mathbb K^2)) := \prod_{n \geq 0} (\mathbb K^2)^{\otimes n} = \{ \bm p = (\bm p_0\,, \bm p_1\,, \ldots\,, \bm p_n\,, \ldots)\colon\, \bm p_n \in (\mathbb K^2)^{\otimes n},\ n \geq 0 \}\,.
$$
This is an algebra under the product
$$
(\bm p \otimes \bm \ell)_n := \sum_{k = 0}^n\, {\bm p_k \otimes \bm\ell_{n-k}} \,, \quad n \geq 0\,, \quad \bm p\,, \bm \ell \in T((\mathbb K^2))\,.
$$
Let $(e_0\,, e_1)$ be the canonical basis of $\mathbb K^2\,$. For each $n \geq 0\,$, the tensors $e_{i_1} \otimes \ldots \otimes e_{i_n}\,$, ranging over all $i_1\,,\ldots\,,i_n \in \{0\,,1\}$ form a natural basis of $(\mathbb K^2)^{\otimes n}\,$. They are indexed by binary words $\word v := \word{i_1\ldots i_n}$  over the  alphabet $\{\word 0\,, \word 1\}\,$, i.e. $\word v \in V := \cup_{n \geq 1} \{\word 0\,,\word 1 \}^n\cup \{\emptyword\}\,$, where $\emptyword$ is the empty
word, of length zero. We identify each basis tensor $e_{i_1} \otimes \ldots \otimes e_{i_n}$ with the word $\word{i_1 \ldots i_n}\,$, so that the tensor product of two basis elements reduces to the concatenation of two words: $\word v \otimes \word w = \word{vw}\,$. Letters and words will be written in \textcolor{NavyBlue}{\textbf{blue}} throughout. Every element $\bm p \in T((\mathbb K^2))$ then expands as
$$
\bm p = \sum_{\word v}\bm p^{\word v}\cdot \word v \,,
$$
with scalar coefficients $\bm p^{\word v} \in \mathbb K\,$. For a word $\word v\,$, we write $|\word v|_{\word i}$ for the number of occurrences of the letter $\word i \in \{\word 0\,, \word 1\}$ in $\word v\,$, and $|\word v| := |\word v|_{\word 0} + |\word v|_{\word 1}$ for its length. The degree and partial degrees of $\bm p \in T((\mathbb K^2))$ are defined by 
$$
\deg(\bp) := \max \{|\word v|\colon\ \bp^{\word v} \neq 0 \}\,, \quad \text{and} \quad \deg_{\word{i}}(\bp) := \max \{|\word v|_{\word{i}}\colon\ \bp^{\word v} \neq 0 \}\,, \quad \word i \in \{\word 0\,, \word 1 \}\,,
$$
with the convention $\deg(0) = \deg_{\word i}(0) = - \infty\,$.

\begin{sqremark}
    Throughout this paper, we use $|\cdot|$ to denote the standard length of the word, coinciding with the number of letters. However, we note that all the results hold true if one defines the length by $2|\word v|_{\word 0} + |\word v|_{\word 1}$, which is more natural for the diffusive scaling of the Brownian motion. 
\end{sqremark}

Finally, the tensor algebra $T(\mathbb K^2) := \bigoplus_{n \geq 0} (\mathbb K^2)^{\otimes n}$ is the sub-algebra of $T((\mathbb K^2))$ consisting of elements of finite degree, i.e., with finitely many nonzero coefficients.

\paragraph{Path signatures.} We now introduce the central object.
\begin{definition}
    Let $(X_t)_{t \geq 0} = (X^0_t\,, X^1_t)_{t \geq 0}$ be a two-dimensional continuous semimartingale. For $0 \leq s \leq t\,$, the Stratonovich signature of $X$ over $[s,t]$ is the element $\sigX[s,t] \in T((\R^2))$ whose components are given by
    $$
    \sigX[s,t]^{\word{i_1...i_n}} := \int_{s \leq t_1 \leq \ldots \leq t_n \leq t} \circ dX^{i_1}_{t_1}\circ \ldots \circ dX^{i_n}_{t_n}\,, \quad i_1\,,...\,, i_n \in \{0\,,1\}\,,
    $$
    and $\sigX[s,t]^{\emptyword} := 1\,$. {We also use the shorthand notation $\sigX[t] := \sigX[0,t]$.}
\end{definition}
In other words, the path signature of $X$ is given by the collection of all possible iterated Stratonovich integrals between the two components of $X\,$, and the word index encodes the order of integration. For $\word i \in \{\word 0\,,\word 1\}$ and $n \geq 0\,$, one has the closed form $\sigX[s,t]^{\word i^{\otimes n}} = (X_t - X_s)^n / n!\,$, but other components depend in general on the whole path of $X$ between $s$ and $t\,$.

A fundamental property linking path signatures to the algebraic structure of $T((\R^2))$ is Chen's relation
\begin{equation}
\label{eq:Chen}
\sigX[s,u] \otimes \sigX[u,t] = \sigX[s,t]\,, \quad 0 \leq s \leq u \leq t\,.
\end{equation}
This identity shows how iterated integrals compose over adjacent time intervals, and follows directly from the additive structure of integration.

Throughout this paper, we work with the \textit{time-augmented signature} of a one-dimensional continuous semimartingale $(X_t)_{t \geq 0}\,$, defined as the path signature of the two-dimensional process $\widehat X := (t, X_t)\,$, and denoted by $\sigXhat[\cdot, \cdot]\,$. The letters $\word 0$ and $\word 1$ then correspond to integration against $dt$ and $\circ dX_t\,$, respectively. 

\subsection{Linear functionals of the signature}\label{subsec:linear_fct}

\paragraph{Linear functionals of the signature and shuffle product.} The extended linear algebra serves as the natural space of coefficients for linear functionals of the signature.
For $\bm p\,, \sigX[] \in T((\mathbb K^2))\,$, we define the pairing
\begin{equation}\label{eq:pairing_def}
    \bracket{\bm p}{\sigX[]} := \sum_{n \geq 0}\sum_{|\word v| = n} \bm p^{\word v} \sigX[]^{\word v}. 
\end{equation}
When either $\bm p$ or $\sigX[]$ is of finite degree, this is well-defined. In particular, $\bracket{\word v}{\sigX[]} = \sigX[]^{\word v}\,$. 
When both $\bm p$ and $\sigX[]$ are infinite, the pairing is well-defined when the following seminorm is finite:
\begin{equation}
\label{eq:def_norm_X}
\|\bp\|_{\sigX[]} := \sum_{n \geq 0}\, \left|\sum_{|\word v| = n}\,\bp^{\word v}\, \sigX[]^{\word v} \right| < \infty.
\end{equation}

Similarly to power series, path signatures have the remarkable algebraic property to make the product of linear functionals again a linear functional. More precisely, for any words $\word v\,, \word w \in V\,$,
\begin{equation}
\label{eq:shuffle_ppty_semimg}
 \bracket{\word v}{\sigX[s,t]} \bracket{\word w}{\sigX[s,t]} = \bracket{\word v \shuprod \word w}{\sigX[s,t]}\,, 
\end{equation}
where $\word v \shuprod \word w \in T(\R^2)$ is the \textit{shuffle product} of $\word v$ and $\word w\,$. This identity is the signature analogue of the classical Cauchy product for power series. 
\begin{definition}
    The shuffle product is defined recursively by
    $$
    \word {vi} \shuprod \word{wj} := (\word v \shuprod \word{wj}) \otimes \word i + (\word {vi} \shuprod \word{w}) \otimes \word j\,,
    $$
    for words $\word v\,, \word w \in V$ and letters $\word i\,, \word j \in \{\word 0\,, \word 1\}\,$, with the initialization $\word v \shuprod \emptyword = \emptyword \shuprod \word v = \word v\,$.
\end{definition}
Concretely, $\word v \shuprod \word w$ is the sum of all possible mixing of $\word v$ and $\word w$ that preserve the internal ordering of each word. As an example, $
\word{10}\shuprod \word{1} = \word{101} + \word{110} + \word{110}\,.
$
By linearity, the shuffle product extends to $T(\mathbb K^2)$ as
$$
\quad \bm p \shuprod \bell := \sum_{\word v, \word w} \bm p^{\word v} \bell^{\word w} \word v \shuprod \word w\,, \quad \bm p \,, \bell \in T(\mathbb K^2).
$$
\paragraph{Group-like elements.} Throughout the paper, we will denote by $G$ the set of \textit{group-like elements} of $T((\R^2))\,$, i.e., those that satisfy \eqref{eq:shuffle_ppty_semimg} for all pairs of words $\word{v}, \word{w} \in V$:
$$
G := \{\sigX[] \in T((\R^2))\colon\ \sigX[]^\emptyword = 1, \  \bracket{\word v}{\sigX[]} \bracket{\word w}{\sigX[]} = \bracket{\word v \shuprod \word w}{\sigX[]},\ \word v, \word w \in V \}\,.
$$
Hence, for any realization of the path signature of a continuous semimartingale, we have $\sigX[s,t](\omega) \in G\,$. However, it is important to note that $G \subsetneq T((\R^2))\,$, as for instance any basis element $\word v \neq \emptyword$ is not group-like. Formulating results in terms of group-like elements rather than signatures directly allows us to disentangle the algebraic structure from the probabilistic one, and yields cleaner statements throughout the paper. 

For instance, the shuffle property \eqref{eq:shuffle_ppty_semimg} can now be formulated for group-like elements $\mathbb{X} \in G$ and infinite coefficients $\bell, \bp \in T((\mathbb{K}^2))$:
\begin{equation}\label{eq:shuffle_generic}
    \bracket{\bell\shuprod\bp}{\sigX[]} = \bracket{\bell}{\sigX[]}\bracket{\bp}{\sigX[]},
\end{equation}
whenever $\|\bp\|_{\sigX[]} < \infty$ and $\|\bell\|_{\sigX[]} < \infty$,  thanks to an important property of the seminorm \eqref{eq:def_norm_X}, the shuffle-compatibility: for all $\sigX[] \in G$ and for all $\bp\,, \bm \ell \in T((\mathbb{K}^2))$ such that $\|\bp\|_{\mathbb{X}} < \infty$ and $\|\bell\|_{\mathbb{X}} < \infty$, we have 
\begin{equation}
\label{eq:def_shuffle_compatibility_norm_X}
\| \bp \shuprod \bell\|_{\sigX[]} \leq \|\bp \|_{\sigX[]} \|\bell \|_{\sigX[]}\,,
\end{equation}
see \cite*[Section 4.1]{cuchiero2025signaturesdesaffinepolynomial}. 
This submultiplicativity is pivotal for the estimates in Sections \ref{section:sig_expansions} and \ref{sect:heat_and_riccati_sig}.

The following lemma shows that pairings against a range of group-like elements are enough to uniquely identify elements of the extended tensor algebra.
\begin{lemma}\label{lem:uniq_coef}
Let $\bell \in \eTAC[2]\,$. Assume that there exists, $n \geq 1$ and $r > 0$ such that for all $\sigX[] \in G$ with $|\sigX[]^{\word v}| \leq r$ for all $1 \leq |\word v| \leq n\,$, the sum $\bracket{\bell}{\sigX[]}$ is convergent and null. Then $\bell = 0\,$.
\end{lemma}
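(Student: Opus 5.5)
We plan to build a rich, explicit family of group-like elements inside the admissible neighbourhood and reduce the vanishing of $\bracket{\bell}{\sigX[]}$ to the vanishing of an analytic function of finitely many parameters. Concretely, for a piecewise linear path $\gamma$ in $\R^2$ with $m$ segments of increments $x_1,\dots,x_m \in \R^2$, Chen's relation \eqref{eq:Chen} gives the signature
\[
\sigX[](x_1,\dots,x_m) = \exp^{\otimes}(x_1)\otimes\cdots\otimes\exp^{\otimes}(x_m) \in G ,
\]
and its coordinates are explicit polynomials in the entries of the $x_j$. Moreover $|\sigX[]^{\word v}|\le (\sum_j |x_j|_1)^{|\word v|}/|\word v|!$. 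Hence, choosing all $|x_j|\le \varepsilon$ with $m\varepsilon$ small enough (depending on $r$ and $n$), every such element satisfies the hypothesis $|\sigX[]^{\word v}|\le r$ for $1\le|\word v|\le n$. By assumption, $\bracket{\bell}{\sigX[]}$ then converges and vanishes.

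Next, we scale the increments: replace $x_j$ by $\lambda x_j$ with $\lambda\in[0,1]$. The degree-$k$ component of the signature is homogeneous of degree $k$ in $\lambda$, so
\[
F(\lambda) := \bracket{\bell}{\sigX[](\lambda x)} = \sum_{k\ge0} \lambda^k\, c_k(x), \qquad c_k(x) := \sum_{|\word v|=k}\bell^{\word v}\sigX[](x)^{\word v}.
\]
This series converges for every $\lambda\in[0,1]$ and equals $0$ there. Convergence at $\lambda=1$ does not by itself give absolute convergence, so we enlarge the box slightly: convergence at some $\lambda_0>1$ (still inside the admissible set) forces $c_k(x)\lambda_0^k\to0$. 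Hence $F$ is a power series with radius at least $\lambda_0>1$ that vanishes on $[0,1]$. It is therefore identically zero, so $c_k(x)=0$ for every $k$ and every small admissible $(x_1,\dots,x_m)$.

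Each $c_k$ is a polynomial in the finitely many real variables $x_j^i$, and it vanishes on an open set, so it vanishes identically. It remains to recover the individual coefficients $\bell^{\word v}$ with $|\word v|=k$ from the polynomial $c_k$. Take $m=k$ and increments $x_j = t_j e_{i_j}$ along a single axis each. For a fixed word $\word v=\word{i_1\ldots i_k}$, the coefficient of the monomial $t_1t_2\cdots t_k$ in $c_k$ is exactly $\bell^{\word v}$. Indeed, the multilinear term of $\exp(t_1e_{i_1})\otimes\cdots\otimes\exp(t_ke_{i_k})$ is $t_1\cdots t_k\, e_{i_1}\otimes\cdots\otimes e_{i_k}$. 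Since $c_k\equiv0$, we get $\bell^{\word v}=0$ for every word, so $\bell=0$.

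The main obstacle is the analytic step, namely passing from mere convergence of $\bracket{\bell}{\sigX[]}$ to an identity between power series in $\lambda$ so that coefficients can be separated. Since the hypothesis holds on a full neighbourhood, we will handle this by using admissibility at a dilation $\lambda_0>1$ to obtain geometric decay of $c_k$, and then apply the identity theorem for power series. One subtlety is that $r$ might be large compared with $n$-level constraints, but only smallness is needed, so this causes no difficulty.
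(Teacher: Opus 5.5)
Your proof is correct and follows essentially the paper's route. You use a dilation (the degree-$k$ level scales like $\lambda^k$) together with the identity theorem for power series to kill each level sum $\sum_{|\word{v}|=k}\bell^{\word{v}}\mathbb{X}^{\word{v}}$. You then use axis-aligned piecewise-linear signatures $e^{\otimes t_1\word{i_1}}\otimes\cdots\otimes e^{\otimes t_k\word{i_k}}$ to read off $\bell^{\word{v}}$ as the coefficient of $t_1\cdots t_k$; the paper extracts the same coefficient by applying $\partial^k/\partial t_1\cdots\partial t_k$ at $0$.

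The differences are only cosmetic. You work with piecewise-linear signatures from the start, whereas the paper first shows that the level sums vanish on all of $G$. Also, your enlargement to $\lambda_0>1$ is unnecessary: convergence at $\lambda=1$ already gives radius of convergence at least $1$, and the series vanishes on $[0,1)$.
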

\begin{proof}Let $\sigX[] \in G\,$. Then for any $\varepsilon > 0\,$, it is easy to verify that the dilation $\sigX[]^\varepsilon := (\varepsilon^{|\word v|} \sigX[]^{\wv})_{\word{v} \in V}$, belongs to $G$ as well. 
We can choose $\varepsilon_{\max} > 0$ such that $\max_{1 \leq |\word v| \leq n}|(\sigX[]^{\varepsilon})^{\wv}| \leq r$ for any $\varepsilon \in [0, \varepsilon_{\max}]\,$, so that
    $$
    \sum_{n \geq 0} \varepsilon^n \left(\sum_{|\wv| = n} \bell^{\wv} \sigX[]^{\wv} \right) = \bracket{\bell}{\sigX[]^{\varepsilon}} =  0\,, \quad \varepsilon \in (0\,, \varepsilon_{\max}]\,.
    $$
    Hence, the analytic function $\varepsilon\mapsto\sum_{n \geq 0}\varepsilon^n\left(\sum_{|\word{v}|=n}\bell^{\word{v}}\sigX[]^{\word{v}}\right)$ vanishes on $(0, \varepsilon_{\max}]$, which yields 
    \begin{equation}\label{eq:level_sum__zero}
        \sum_{|\word{v}|=n}\bell^{\word{v}}\sigX[]^{\word{v}} = 0, \quad n \geq 0\,, \quad \sigX[] \in G\,.
    \end{equation}

    Finally, for each word $\word{v} = \word{i_1\ldots i_n}$, take $\mathbb{X}(t_1, \ldots, t_n) = e^{\otimes t_1 \word{i_1}}\otimes\ldots\otimes e^{\otimes t_n \word{i_n}}$. It is group-like as a signature of a piece-wise linear path.
    Equation \eqref{eq:level_sum__zero} then reads
    \begin{equation}\label{eq:piecewise-linear-sig}
            \sum_{|\word{w}|=n}\bell^{\word{w}}\bracket{\word{w}}{e^{\otimes t_1 \word{i_1}}\otimes\ldots\otimes e^{\otimes t_n \word{i_n}}} \equiv 0, \quad t_1, \ldots, t_n \in \R.
    \end{equation}
    Observing that $\dfrac{\partial}{\partial t_k}e^{\otimes t_k \word{i_k}} = \word{i_k}e^{\otimes t_k \word{i_k}}$, we differentiate \eqref{eq:piecewise-linear-sig} using $\dfrac{\partial^n}{\partial t_1 \ldots \partial t_n}$ and set $t_1 = \ldots = t_n = 0$:
    $$
    \sum_{|\word{w}|=n}\bell^{\word{w}}\bracket{\word{w}}{\word{i_1\ldots i_n}} = \bell^{\word{v}} = 0.
    $$
    Since $\word{v}$ was chosen arbitrary, this concludes the proof.
\end{proof}

The set $G$ of group-like elements indeed forms a group with respect to the tensor product $\otimes\,$, whose neutral element is $\emptyword\,$. Moreover, if $\mathbb{X} \in G$ corresponds to a signature of path $X = (X_s)_{s \in [0, T]}$, then its inverse $\mathbb{X}^{-1} \in G$, defined as the unique element such that 
$$
\mathbb{X} \otimes \mathbb{X}^{-1} = \mathbb{X}^{-1} \otimes \mathbb{X} = \emptyword,
$$
is given by the signature of the time-reversal path $\overleftarrow{X} = (X_{T - s})_{s \in [0, T]}$. This follows immediately from the uniqueness result of \citet*{boedihardjo2016signature}.

\paragraph{Shuffle exponential and logarithm.}
For $\bell \in T((\mathbb{K}^2))$, we define the shuffle power recursively by
$$
\bell\shupow{n} = \bell\shupow{n-1}\shuprod\bell,\quad n \geq 1, \quad
\bell\shupow{0} = \emptyword.
$$
For $\bell \in T((\mathbb{K}^2))$ such that $\bell^{\emptyword} = 0$, we define the
shuffle exponential and the shuffle logarithm by
$$
\shuexp{\bell} := \sum_{n \geq 0}\dfrac{\bell\shupow{n}}{n!}, \qquad
\log^{\shuprod}(\emptyword + \bell) := \sum_{n \geq 1} (-1)^{n-1} \dfrac{\bell\shupow{n}}{n}.
$$
The condition $\bell^{\emptyword} = 0$ ensures that each element of
$\shuexp{\bell}$ and $\log^{\shuprod}(\emptyword + \bell)$ is given by a finite
sum, so that the objects are well-defined. One can verify that for $\bell \in T((\mathbb{K}^2))$ with $\bell^{\emptyword} = 0$,
$$
\shuexp{\log^{\shuprod}(\emptyword + \bell )} = \emptyword + \bell 
\quad \text{and} \quad 
\log^{\shuprod}(\shuexp{\bell}) = \bell.
$$

In the case $\bell^\emptyword=0$, this property follows purely from the algebraic properties of formal power series and holds over any field $\mathbb{K}$.
For a general $\bell \in T((\mathbb{K}^2))$, we define its shuffle exponential as
\begin{equation}\label{eq:shuexp_def}
    \shuexp{\bell} := \exp({\bell^{\emptyword}})\shuexp{{\overline{\bell}}},
    \quad \text{where} \quad \overline{\bell} := \bell - \bell^{\emptyword}\emptyword.
\end{equation}
Similarly, for $\bell \in T((\mathbb{K}^2))$ with $\bell^{\emptyword} \neq 0$, the
shuffle logarithm is defined by
$$
\log^{\shuprod}(\bell) := \log(\bell^{\emptyword})
+ \log^{\shuprod}\!\left(\emptyword + \dfrac{\overline\bell}{\bell^{\emptyword}}\right)
= \log(\bell^{\emptyword})\,\emptyword
+ \sum_{n \geq 1} \frac{(-1)^{n-1}}{n}
\left(\frac{\overline{\bell}}{\bell^{\emptyword}}\right)\shupow{n}.
$$
Here $\log(\bell^{\emptyword}) \in \mathbb{K}$ can be multi-valued when $\mathbb{K} = \C$.
Hence, for generic
coefficients, the relationship
$$
\shuexp{\log^{\shuprod}(\bell)} = \bell, \quad \bell \in T((\mathbb{K}^2)), \quad \bell^{\emptyword} \neq 0,
$$
still holds, while
$$
\log^{\shuprod}(\shuexp{\bell}) = \bell + 2\pi i k\,\emptyword,
\quad \bell \in T((\mathbb{K}^2)),
$$
now holds up to the constant term $2\pi i k\,\emptyword$ for some $k \in \mathbb{Z}$.
We stress that this ambiguity in the choice of the complex logarithm concerns only the constant term of the shuffle
logarithm.

For elements $\sigX[] \in G$ and $\|\bell\|_{\mathbb{X}} < \infty$, the
exponential of a linear functional can be expressed as a linear functional using the
shuffle property \eqref{eq:shuffle_generic}:
$$
\exp(\bracket{\bell}{\sigX[]}) = \bracket{\shuexp{\bell}}{\sigX[]}.
$$
Similarly, when, in addition, $\bell^{\emptyword} \neq 0$ and
$\left\|\frac{\overline{\bell}}{\bell^{\emptyword}}\right\|_{\sigX[]} < 1$, we have
$$
\log(\bracket{\bell}{\sigX[]}) = \bracket{\log^{\shuprod}(\bell)}{\sigX[]}.
$$
This follows from the shuffle-compatibility \eqref{eq:def_shuffle_compatibility_norm_X} of the seminorm $\|\cdot\|_{\mathbb{X}}$ and the definition of the shuffle logarithm.


\paragraph{Convergence classes.} Having introduced the algebraic structure, we now address its analytic counterpart by developing our main tools to study the convergence of the infinite series emerging from dot products. For any $\sigX[] \in G\,$, we define
\begin{equation}
    \label{eq:def_A_X}
\mathcal A_{\sigX[]} := \left\{\bell \in \eTAC[2] \colon\ \left\|\bell \right\|_{\sigX[]} < \infty \right\}
\end{equation}

When $\bm p \in T(\C^2)$ is of finite degree, the size of a group-like element $\sigX[] \in G$ relative to $\bm p$ is given by
\begin{equation}
\label{eq:def_M_p}
M_{\bp}(\sigX[]) := \max_{1 \leq |\word v| \leq \deg(\bp)}\, \left|\sigX[]^{\word v} \right|,
\end{equation}
and the associated ball of radius $r > 0$ in $G$ by
\begin{equation}
    \label{eq:def_G_p_r}
G_{\bp, r}(\mathbb R^2) := \left\{\sigX[] \in G\colon\ M_{\bp}(\sigX[]) \leq r  \right\}\,.
\end{equation}

Similarly to the power series case, this allows us to define the class of coefficients with convergence radius $r$ as
\begin{equation}\label{eq:class_A_p_r}
    \mathcal A_{\bp, r} := \left\{\bell \in \eTAC[2]\colon\, \|\bell\|_{\sigX[]} < \infty, \text{ for all } \sigX[] \in G_{\bp, r} \right\},
\end{equation}
while $\mathcal A_{\infty} := \bigcap_{\sigX[] \in G}\,\mathcal A_{\sigX[]}$ corresponds to linear functionals with infinite radius.

\subsection{Left and right shifts and Itô's formula}
\label{subsec:ito}
For a word $\word w \in V\,$, the left and right shifts of $\bp \in T((\C^2))$ by $\word w$ are defined as:
\begin{equation}
    \label{eq:def_single_word_shifts}
\bp |_{\word w} := \sum_{\word v}\,\bp^{\word v \word w}\cdot \word v\,, \quad \text{and} \quad ~_{\word w}|\bp := \sum_{\word v}\,\bp^{\word w \word v}\cdot \word v\,.
\end{equation}
They act as the tensor algebra analogues of differentiation: in the power series case, right-shifting by $\word 1$ sends $\bracket{\bm p}{\mathbbm x} = \sum_{n \geq 0} \bm p^n x^n /n!$ to $\bracket{\bm p|_{\word 1}}{\mathbbm x} = \sum_{n \geq 0} \bm p^{n+1} x^n / n !\,$, corresponding to differentiation with respect to $x\,$. In the full signature setting, they arise naturally when differentiating $\bracket{\bm p}{\sigXhat[0,t]}$ via Itô's formula, as detailed in Theorem \ref{thm:ito_formulas}. More precisely, shifting $\bm p$ by $\word 0$ and $\word 1$ in the path-dependent linear functional $F((X_s)_{s \leq t}) := \bracket{\bm p}{\sigXhat[0,t]}$ would correspond to, respectively, the time and space derivatives of $F$ as defined in \cite{Dupire04052019}.
More generally, if $\bp|_{\word v} \in \mathcal A_{\sigX[]}$ for all $\word v\,$, we define
\begin{equation}
    \label{eq:def_right_X_shift}
\bp|_{\sigX[]} :=  \sum_{\word v} \sigX[]^{\word v}\cdot \bp|_{\word v} = \sum_{\word v} \bracket{~_{\word v}|\bp}{\sigX[]}\cdot \word v \,, 
\end{equation}
and if $~_{\word v}|\bp \in \mathcal A_{\sigX[]}$ for all $\word v\,$,
\begin{align}\label{eq:def_left_X_shift}
\quad ~_{\sigX[]}|\bp :=  \sum_{\word v} \sigX[]^{\word v} \cdot ~_{\word v}|\bp = \sum_{\word v}\,\bracket{\bp|_{\word v}}{\sigX[]}\cdot \word v \,.
\end{align}
The shift operations are dual to (left and right) tensor multiplication, as shown in the following proposition.
\begin{proposition}\label{prop:projections_ppties}
    \begin{enumerate}
        \item If $\bp \in \TAC[2]$, and $\sigX[]\,, \sigY[] \in \eTA[2]\,$, then $\bracket{\bp}{\sigX[] \otimes \sigY[]} = \bracket{~_{\sigX[]}|\bp}{\sigY[]} = \bracket{\bp|_{\sigY[]}}{\sigX[]}\,$.
        \item If $\bp\proj{v} \in \mathcal{A}_{\sigX[]}$ for all $\word{v} \in V$, then $\bracket{\bp}{\sigX[]\otimes \word{v}} = \bracket{~_{\sigX[]}|\bp}{\word{v}} = \bracket{\bp|_{\word{v}}}{\sigX[]}\,$, for all $\bm p \in \eTAC[2]\,$.
    \end{enumerate}
\end{proposition}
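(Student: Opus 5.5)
The plan is to reduce both parts to the level of basis words and then use bilinearity, with the only analytic point in part 2 being the convergence of the series involved.

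\textbf{Part 1.} Since $\bp \in \TAC[2]$ has finitely many nonzero coefficients, it suffices by linearity to treat $\bp = \word w$ for a single word $\word w = \word{i_1\ldots i_n}$. By the definition of the tensor product on $\eTA[2]$, the coefficient of $\word w$ in $\sigX[]\otimes\sigY[]$ is
\[
\bracket{\word w}{\sigX[]\otimes\sigY[]} = \sum_{\word{u}\word{v} = \word w} \sigX[]^{\word u}\sigY[]^{\word v},
\]
where the sum runs over the $n+1$ deconcatenations of $\word w$. From the definition of the single-word shifts, $~_{\word u}|\word w = \word v$ if $\word w = \word{uv}$ and $0$ otherwise. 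Hence
\[
~_{\sigX[]}|\word w = \sum_{\word u}\sigX[]^{\word u}\,~_{\word u}|\word w = \sum_{\word{uv}=\word w}\sigX[]^{\word u}\word v,
\]
a finite sum. Pairing it with $\sigY[]$ gives exactly the expression above. Symmetrically, $\word w|_{\word v} = \word u$ when $\word w = \word{uv}$, so $\word w|_{\sigY[]} = \sum_{\word{uv}=\word w}\sigY[]^{\word v}\word u$, and pairing it with $\sigX[]$ gives the same expression. All sums are finite, so no convergence issue arises.

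\textbf{Part 2.} Now $\bp$ may be infinite, but $\sigY[] = \word v$ is a single basis element. First note that $\sigX[]\otimes\word v = \sum_{\word u}\sigX[]^{\word u}\,\word{uv}$, so
\[
\bracket{\bp}{\sigX[]\otimes\word v} = \sum_{\word u}\bp^{\word{uv}}\sigX[]^{\word u}.
\]
Grouping this by the length $|\word u| = n$ gives $\sum_n\sum_{|\word u|=n}(\bp|_{\word v})^{\word u}\sigX[]^{\word u} = \bracket{\bp|_{\word v}}{\sigX[]}$. Convergence of this grouped series is precisely the assumption $\bp|_{\word v}\in\mathcal A_{\sigX[]}$; this grouping by length is the sense in which the pairing \eqref{eq:pairing_def} is defined.

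For the middle term, use the second expression in \eqref{eq:def_left_X_shift}: $~_{\sigX[]}|\bp = \sum_{\word w}\bracket{\bp|_{\word w}}{\sigX[]}\,\word w$. This is well defined because each $\bp|_{\word w}\in\mathcal A_{\sigX[]}$. Its pairing with the single word $\word v$ picks out the coefficient $\bracket{\bp|_{\word v}}{\sigX[]}$. Here the hypothesis of the statement, $\bp|_{\word v}\in\mathcal A_{\sigX[]}$ for all $\word v$, is what guarantees that $~_{\sigX[]}|\bp$ exists, because \eqref{eq:def_left_X_shift} requires it. The three quantities therefore coincide.

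\textbf{Main obstacle.} The only delicate step is the identification in part 2 of the doubly indexed sum $\sum_{\word u}\bp^{\word{uv}}\sigX[]^{\word u}$ with the pairing as defined by level-wise summation. The reindexing $\word{uv}\leftrightarrow\word u$ preserves the grouping by levels, up to the constant shift $|\word v|$. So the partial sums over levels of $\bracket{\bp}{\sigX[]\otimes\word v}$ are exactly the partial sums over levels of $\bracket{\bp|_{\word v}}{\sigX[]}$, and convergence transfers directly from the assumption without any rearrangement argument.
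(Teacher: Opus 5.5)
Your proof is correct and follows essentially the same route as the paper: reduce to basis words using the deconcatenation structure of $\otimes$ and bilinearity of the pairing. Part 2 is then the single-word case, and its level-wise convergence is supplied exactly by the hypothesis $\bp|_{\word v}\in\mathcal A_{\sigX[]}$. The differences are cosmetic: in part 1 you reduce by linearity in $\bp$, whereas the paper expands $\sigY[]$ over words, and your level-shift argument in part 2 spells out what the paper's one-line justification leaves implicit.
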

\begin{proof}
    We first note that for all $\sigX[] \in T((\R^2))$, $\bp \in \TA[2]$ and $\word{v} \in V$,
    $$
    \bracket{\bp}{\sigX[]\otimes\word{v}} = \sum_{\word{w}}\bp^{\word{wv}}\sigX[]^{\word{w}} = \bracket{\bp\proj{v}}{\sigX[]},
    $$
    and the corresponding identity holds for the left projection. The first statement then follows from
    $$
    \bracket{\bp}{\sigX[] \otimes \sigY[]} = \sum_{\word{v}}\sigY[]^{\word{v}}\bracket{\bp}{\sigX[] \otimes \word{v}} = \sum_{\word{v}}\sigY[]^{\word{v}}\bracket{\bp\proj{\word{v}}}{\sigX[]} = \bracket{\bp|_{\sigY[]}}{\sigX[]}.
    $$
    For the second part of the statement, the condition $\bp|_{\word v} \in \mathcal A_{\sigX[]}$ ensures that the pairings are well-defined and are all equal to
    $$
    \sum_{n \geq 0}\sum_{|\word{w}| = n}\bp^{\word{wv}}\sigX[]^{\word{w}} < \infty.
    $$
\end{proof}
\begin{sqremark}
    \label{remark:recentering} The duality $\bracket{\bp}{\sigX[] \otimes \sigY[]} = \bracket{~_{\sigX[]}|\bp}{\sigY[]} = \bracket{\bp|_{\sigY[]}}{\sigX[]}$ from Proposition \ref{prop:projections_ppties} is equivalent to recentering in the power series case. That is, noticing that $\mathbbm x \otimes \mathbbm y = (1\,, x + y\,, (x + y)^2 / 2\,, \ldots\,,(x + y)^n / n!\,, \ldots)\,$,
    $$
    \bracket{\bm p}{\mathbbm x \otimes \mathbbm y} = \sum_{n \geq 0} p^n \frac{(x + y)^n}{n!} = \sum_{n \geq 0} \left(\sum_{k \geq 0}  p^{k+n} \frac{x^k}{k!} \right) \frac{y^n}{n!} = \bracket{{}_{\mathbbm x}|\bm p}{\mathbbm y}\,.
    $$
    This relation is at the core of Section \ref{section:recentering}.
\end{sqremark}

We provide a version of Itô's formula, derived by \citet[Theorem 3.3]{abi2024path} and slightly generalized for our needs.

Let $X = (X^0, X^1)$ denote a continuous semimartingale and let $H = (H^0, H^1)$ denote the corresponding bounded variation part. The following class describes the time-dependent coefficients $(\bell_t)_{t \in [0, T]}$ (possibly infinite) such that Itô's formula can be applied to the process $t \mapsto \bracket{\bell_t}{\sigX}$.

We say that $(\bell_t)_{t \in [0, T]}$ belongs to $\mathcal{I}^r(X)$ if the following conditions are satisfied:
\begin{enumerate}
    \item $\norm{\bell_t}_{\sigX} < \infty$ for $t \in [0, T]$ a.s.
    \item For all $\word{v} \in V$, the map $t \mapsto \bell_t^{\word{v}}$ is $C^1$ on $[0\,,T]$ and 
    \[
    \int_{0}^{T} \|\dot{\bell}_t\|_{\sigX} dt < \infty \quad \text{a.s.}
    \]
    \item For all $\word{i}, \word{j} \in \{\word{0}, \word{1}\}$ such that $\bracket{X^i}{X^j} \not\equiv 0$, we have 
    \[
    \bell_t\proj{i}, \, \bell_t\proj{ji}, \, \dot{\bell}_t \in \mathcal{A}_{\sigX}, \quad t \in {[0, T]} \quad \text{ a.s.,}
    \]
    and 
    \[
    \int_{0}^{T} \norm{\bell_t\proj{i}}_{\sigX} |dH^i_t| + 
    \int_{0}^{T} \norm{\bell_t\proj{ji}}_{\sigX} d\bracket{X^i}{X^j}_t + 
    \int_{0}^{T} \norm{\bell_t\proj{i}}_{\sigX}^2 d\langle X^i \rangle_t < \infty \quad \text{a.s.}
    \]
\end{enumerate}
We denote by $\mathcal{I}^l(X)$ the class of time-dependent coefficients satisfying conditions 1--3 with the left shifts $~_{\word{i}}|\bell_t$ and $~_{\word{ij}}|\bell_t$ in condition 3. This class allows us to apply Itô's formula to the process $t \mapsto \bracket{\bell_t}{\sigX[t]^{-1}}$.
\begin{theorem}[Itô's formulas]\label{thm:ito_formulas}
        Let $(\bell_t)_{t\in[0, T]} \in \mathcal{I}^r(X)$. Then, $(\bracket{\bell_t}{\sigX})_{t \in [0, T]}$ is an Itô process and, for all $0 \leq s \leq t \leq T$,
    \begin{equation}\label{eq:right_ito}
        \bracket{\bell_t}{\sigX[t]} = \bracket{\bell_s}{\sigX[s]}  +
        \int_s^t\bracket{\dot\bell_u}{\sigX[u]}du
        +\sum_{\word{i}\in \{\word{0}, \word{1}\}}\int_s^t\bracket{\bell_u\proj{i}}{\sigX[u]}dX_u^i + \dfrac12\sum_{\word{i}, \word{j}\in \{\word{0}, \word{1}\}}\int_s^t\bracket{\bell_u\proj{ji}}{\sigX[u]}d\bracket{X^i}{X^j}_u.
    \end{equation}    
    Similarly, if $(\bell_t)_{t\in[0, T]} \in \mathcal{I}^l(X)$, then $(\bracket{\bell_t}{\sigX[t]^{-1}})_{t \in {[0, T]}}$ is an Itô process and, for all $0 \leq s \leq t \leq T$,
\begin{equation}\label{eq:left_ito}
        \bracket{\bell_t}{\sigX[t]^{-1}} = \bracket{\bell_s}{\sigX[s]^{-1}}  +
        \int_s^t\bracket{\dot\bell_u}{\sigX[u]^{-1}}du
        -\sum_{\word{i}\in \{\word{0}, \word{1}\}}\int_s^t\bracket{~_{\word{i}}|\bell_u}{\sigX[u]^{-1}}dX_u^i + 
        \dfrac12\sum_{\word{i}, \word{j}\in \{\word{0}, \word{1}\}}\int_s^t\bracket{~_{\word{ij}}|\bell_u}{\sigX[u]^{-1}}d\bracket{X^i}{X^j}_u.
    \end{equation} 
\end{theorem}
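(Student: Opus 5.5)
The plan is to prove \eqref{eq:right_ito} first for coefficients of finite degree by a purely algebraic computation, and then to pass to the limit in the degree. The limit is justified by the seminorm bounds encoded in the class $\mathcal I^r(X)$. The left formula \eqref{eq:left_ito} follows the same scheme, applied to the inverse signature.

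\textbf{Step 1 (single words, Itô form).} For a word $\word v = \word w \word i$, the definition of the Stratonovich signature gives
$$\sigX[t]^{\word{wi}} = \int_0^t \sigX[u]^{\word w}\circ dX^i_u .$$
If $\word w = \word{w'j}$, then $\sigX[\cdot]^{\word w}$ is a semimartingale with martingale part $\int \sigX[u]^{\word{w'}}\, dX^j_u$. The Stratonovich--Itô conversion therefore yields
$$d\sigX[t]^{\word{wi}} = \sigX[t]^{\word w}\, dX^i_t + \tfrac12\, \sigX[t]^{\word{w'}}\, d\bracket{X^j}{X^i}_t .$$
By definition of the right shifts, this is exactly \eqref{eq:right_ito} for $\bell_t \equiv \word v$, since $\word v\proj{i} = \word w$ and $\word v\proj{ji} = \word{w'}$. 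When $\word v$ is a single letter the bracket term vanishes.

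\textbf{Step 2 (time-dependent truncations).} Let $\bell^{\le N}_t$ denote the truncation of $\bell_t$ to words of length at most $N$. Its finitely many coefficients are $C^1$ in $t$. Applying the Itô product rule to each term $\bell_t^{\word v}\sigX[t]^{\word v}$ (the coefficients are deterministic and of bounded variation, so they contribute no covariation) and summing over words gives \eqref{eq:right_ito} with $\bell$ replaced by $\bell^{\le N}$. The truncation is compatible with the shifts: $(\bell^{\le N})\proj{i} = (\bell\proj{i})^{\le N-1}$, and similarly $(\bell^{\le N})\proj{ji} = (\bell\proj{ji})^{\le N-2}$. Every term in the identity is therefore a level-wise partial sum of the corresponding pairing.

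\textbf{Step 3 (passage to the limit, the main obstacle).} The key observation is that, for any $\bm q$, the level-truncated pairing satisfies
$$\bigl|\bracket{\bm q^{\le N}}{\sigX[u]}\bigr| \le \|\bm q\|_{\sigX[u]},$$
and converges to $\bracket{\bm q}{\sigX[u]}$ whenever $\|\bm q\|_{\sigX[u]} < \infty$. Condition 1 then gives convergence of the two boundary terms. The $du$-integral converges by dominated convergence, using the integrability of $\|\dot\bell_u\|_{\sigX[u]}$ in condition 2. The $dH^i$ and $d\bracket{X^i}{X^j}$ integrals converge likewise, using the first two integrability conditions in condition 3.

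The delicate term is the local martingale part. I would use the dominated convergence theorem for stochastic integrals. The integrands $\bracket{(\bell_u\proj{i})^{\le N-1}}{\sigX[u]}$ are continuous and adapted, hence predictable. They converge pointwise and are dominated by $K_u := \|\bell_u\proj{i}\|_{\sigX[u]}$, which satisfies $\int_0^T K_u^2\, d\langle X^i\rangle_u < \infty$ a.s. by condition 3. After localizing with stopping times that make this integral bounded, the stochastic integrals converge in ucp, and hence almost surely along a subsequence. This identifies the limit and shows that $\bracket{\bell_t}{\sigX[t]}$ is an Itô process.

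\textbf{Step 4 (left formula).} Chen's relation \eqref{eq:Chen} gives $\sigX[t]^{-1} = \sigX[t,0]$, the signature of the time-reversed path. This inverse solves the Stratonovich equation
$$d\sigX[t]^{-1} = -\circ dX_t \otimes \sigX[t]^{-1}.$$
Componentwise, $d(\sigX[t]^{-1})^{\word{iw}} = -(\sigX[t]^{-1})^{\word w}\circ dX^i_t$. The Stratonovich correction now contributes $+\tfrac12 (\sigX[t]^{-1})^{\word{w'}}\, d\bracket{X^i}{X^j}_t$ for $\word{iw} = \word{ijw'}$, because the two minus signs cancel. This produces the left shifts, with the sign pattern of \eqref{eq:left_ito}. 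The truncation and limit argument of Steps 2--3 then applies verbatim, with the class $\mathcal I^l(X)$ replacing $\mathcal I^r(X)$.
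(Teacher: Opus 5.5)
Your proposal is correct and follows essentially the same route as the paper. For the right formula, the paper delegates to the truncation-and-limit argument of the cited reference (Itô's formula at finite degree, then dominated convergence for the Lebesgue--Stieltjes and stochastic integrals under the conditions of $\mathcal I^r(X)$), which you reconstruct explicitly via level-wise truncations controlled by the seminorm. For the left formula, it uses the same Stratonovich equation $d\sigX[t]^{-1} = -\circ dX_t \otimes \sigX[t]^{-1}$; note that your ``verbatim'' transfer requires the seminorms in the conditions defining $\mathcal I^l(X)$ to be taken with respect to $\sigX[t]^{-1}$ rather than $\sigX[t]$.
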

\begin{proof}
    The proof of \eqref{eq:right_ito} follows from \cite[Theorem 3.3 and Corollary 3.4]{abi2024path}. Regarding the second Itô formula, we observe that
    \[
    0 = d(\sigX\otimes\sigX[t]^{-1}) = d\sigX\otimes\sigX[t]^{-1} + \sigX\otimes d \sigX[t]^{-1} = \sigX[t]\otimes\circ d X_t\otimes\sigX[t]^{-1} + \sigX\otimes d \sigX[t]^{-1}.
    \]
    Multiplying by $\sigX^{-1}$ on the left yields the dynamics of $\sigX[t]^{-1}$: 
    \[
    d\sigX[t]^{-1} = -\circ dX_t\otimes\sigX[t]^{-1}.
    \]
    The remainder of the proof follows the steps of the proof of \citet[Theorem 3.3 and Corollary 3.4]{abi2024path}, noting that the left shifts are dual operations to the left tensor products (see Proposition~\ref{prop:projections_ppties}).
\end{proof}

\section{Signature expansions of the Fourier-Laplace transform and its logarithm}
\label{section:sig_expansions}

In this section, we establish the signature expansion of the Fourier--Laplace and log-Fourier--Laplace transforms
$$
\E\left[\exp \left(\bracket{\bm p}{\sigW[T]}\right) \, \Big|\, \mathcal F _t\right] = \bracket{\bu_t}{\sigW[t]} = \exp{\left(\bracket{\bm \psi_t}{\sigW[t]}\right)}\,,
$$
where $\bm p \in T(\C^2)\,$, $\sigW[]$ denotes the signature of the time-augmented Brownian motion $\widehat{W}_t := (t, W_t)\,$, and $\bm u_t\,, \bm \psi_t \in T((\C^2))$ are deterministic coefficients. 

The strategy follows and extends the algebraic approach introduced in Sections \ref{sect:expansion_u_power_ser} and \ref{subsec:power_series_psi} for power series, and proceeds in three steps:\\

\textbf{Step 1.} We first introduce in Section \ref{subsection:class_of_coefs} a class $\mathcal B \subset T(\C^2)$ of admissible coefficients for which the real part of the linear function $\sigXhat[] \mapsto \bracket{\bm p}{\sigXhat[]}$ is bounded from above over all time-augmented signatures $\sigXhat[]\,$. This extends to the signature setting the condition of having a leading polynomial term of even degree with negative real part in Lemma \ref{lemma:enormous_bound_lite}. The required upper bounds are established via Lemma \ref{lem:sig_bound}, which relies on Lyndon word decompositions.\\

\textbf{Step 2.} For $\bm p \in \mathcal B\,$, we exploit the same factorization idea as in Lemma \ref{lemma:enormous_bound_lite} to isolate the leading-order terms in the exponential, and treat the remaining part as a formal signature expansion via the shuffle product. This justifies the interchange of expectation and infinite summation, yielding an entire expansion  of the Fourier--Laplace transform in terms of signature coordinates, that is, $\bracket{\bu_t}{\sigW[t]}$ with deterministic coefficients $\bu_t \in \mathcal{A}_{\infty}\,$, as established in Theorem~\ref{theorem:analyticity_general}.\\

\textbf{Step 3.}  We construct $\bm \psi$ as the tensor algebra analogue of the algebraic logarithm \eqref{eq:psi_def_power_series} {of $\bu$}. As in Theorem~\ref{theorem:entire_series_riccati}, we obtain{, in Theorem~\ref{theorem:log_analyticity_general},} that the infinite sum $\bracket{\bm \psi_t}{\sigX[]}$ has a finite radius of convergence: it converges whenever $M_{\bm p}(\sigX[]) < r\,$, for some $r > 0\,$. Remarkably, the radius condition on $\sigX[]$ only depends on $|\sigX[]^{\word v}|$ for $1 \leq |\word v| \leq \deg(\bm p)\,$. Theorem \ref{theorem:zeros_quartic_exp_main} shows that this limitation is structural: zeros of $\bracket{\bm u_t}{\cdot}$ appear on the complex plane as soon as one leaves the quadratic Gaussian framework, preventing the logarithm from being entire. We will address this issue with a recentering technique in Section \ref{section:recentering}.
\subsection{The class of admissible coefficients}\label{subsection:class_of_coefs}

In this subsection, we construct a class $\mathcal B$ of coefficients $\bp \in T(\C^2)$ of finite degree, such that the real part of the linear functional $\sigXhat[] \mapsto \bracket{\bm p}{\sigXhat[]}$ is bounded from above uniformly over all time-augmented signatures $\sigXhat[]$ of continuous semimartingales $X\,$ {on $[0, T]$}. In particular, for such elements   the Fourier-Laplace transform $\E[\exp (\bracket{\bm p}{\sigXhat[]})]$ is finite. 

We proceed step by step, starting from polynomials of a single variable and their time integrals, and gradually build up to the general class $\mathcal{B}$ introduced in Definition~\ref{def:class_B}.

\begin{sqexample}[Polynomials of $X_T$]\label{ex:polynomials_of_W} 
    {First, consider the case in which $\bracket{\bp}{\sigXhat[T]} = p(X_T)$ for some polynomial $p$ of degree $N$. 
    This implies that $\bp$ contains nonzero coefficients corresponding only to the words $\word{v}$ of the form $\word{v} = \word{1}\conpow{k}$ for $k = 0, 1, \ldots, N$. 
   As in Lemma \ref{lemma:enormous_bound_lite}, $\Re(p)$ is bounded from above if the degree of $p$ is even, $N = 2n$, with leading coefficient such that $\Re(\bp^{\word{1}\conpow{N}}) < 0$.
    In other words, $\bp$ can be written in the form
\begin{equation}\label{eq:polynomial_coef}
        \bp = -\alpha \word{1}\conpow{2n} + \bq\,, \qquad \Re(\alpha) > 0,\quad \deg_{\word{1}}(\bq) < 2n,  \quad \deg_{\word{0}}(\bq) = 0.
    \end{equation}}
\end{sqexample}

In the general path-dependent setting, identifying a class of coefficients $\bm p$ satisfying this boundedness property is more subtle. Fortunately, Lemma~\ref{lem:sig_bound} shows that the time-augmented signature of a one-dimensional process enjoys bounds analogous to those of ordinary polynomials. Specifically, each component $\sigXhat[T]^{\word{v}}$ can be controlled by $|X_T|^{|\wv|_{\word 1}}$ and $\int_0^T |X_s|^{|\wv|_{\word 1}}\,ds$, whereas components corresponding to words of the form $\word{v0}$ can be controlled only by the integral term. This lemma plays a central role throughout the paper. Its proof relies on \cite{Lyndon1954} words, a subset of $V$ such that any word decomposes as a shuffle-polynomial of Lyndon words, due to the \cite{Radford1979ANR} theorem; see Appendix~\ref{sect:proof_of_sig_bound}. This allows to rewrite any signature coefficient as a polynomial of primitive coefficients using \eqref{eq:shuffle_ppty_semimg}, enabling us to establish the bounds recursively.

 \begin{lemma}\label{lem:sig_bound}
    Let $X = (X_t)_{t \in [0, T]}$ be a continuous semimartingale and let $\sigXhat[]$ denote its time-augmented signature. Then, for any word $\word{v} \in V\,$, there exists a constant $C_{\word v, T} \geq 0\,$, such that
    \begin{equation}\label{eq:sig_bound}
    \begin{aligned}
        \left|\sigXhat[s,t]^{\word{v}}\right| &\leq C_{\word{v}, T}\,\left(1 + \left|X_{s,t}\right|^{|\wv|_{\word 1}} + \int_s^t|X_{s,r}|^{|\wv|_{\word 1}}dr\right), \quad s \leq t \leq T\,, \\
        \left|\sigXhat[s,t]^{\word{v0}}\right| &\leq C_{\word{v}, T}\left(1 + \int_s^t|X_{s,r}|^{|\wv|_{\word 1}}dr\right), \quad s \leq t \leq T\,.
    \end{aligned}
    \end{equation}

\end{lemma}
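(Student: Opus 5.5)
By Chen's relation \eqref{eq:Chen} and translation invariance, it suffices to treat $s=0$ with $X_0=0$. Indeed, $\sigXhat[s,t]$ is the time-augmented signature of the path $r\mapsto (r-s, X_{s,r})$ on $[s,t]$, and $t-s\le T$. The plan is a strong induction on the length $|\word v|$, using the Radford/Lyndon structure of the shuffle algebra. Both bounds will be proved simultaneously.

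\textbf{Reduction to Lyndon words.} By the \cite{Radford1979ANR} theorem, every word $\word v$ is a finite linear combination of shuffle products $\word{l_1}\shuprod\cdots\shuprod\word{l_k}$ of Lyndon words. In each such product, $\sum_j|\word{l_j}|=|\word v|$ and $\sum_j |\word{l_j}|_{\word 1}=|\word v|_{\word 1}$, since shuffles preserve letter counts. By \eqref{eq:shuffle_ppty_semimg}, $\sigXhat[t]^{\word v}$ is then a polynomial in Lyndon coordinates $\sigXhat[t]^{\word{l_j}}$. Suppose each Lyndon coordinate satisfies
\[
|\sigXhat[t]^{\word l}|\le C\big(1+A_t^{|\word l|_{\word 1}}\big),\qquad A_t:=|X_t|+\Big(\int_0^t|X_r|^{m}dr\Big)^{1/m}\ \text{(suitably adapted)}.
\]
Then the product bound follows from Young/Hölder. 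Concretely, I will use $\prod_j(1+a_j)\le C(1+\sum_j a_j^{q_j})$ together with $\big(\int_0^t|X|^{a}\big)^{b}\le C\int_0^t |X|^{ab}$ (Jensen, with $t\le T$) and $|X_t|^{a}\le 1+|X_t|^{m}$ for $a\le m$. This yields the first bound for all words. The second bound, for words ending in $\word 0$, needs separate care, since the product of Lyndon factors may contain $|X_t|$ terms.

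\textbf{Lyndon words directly.} A Lyndon word over $\{\word 0<\word 1\}$ of length $\ge2$ starts with $\word 0$ and ends with $\word 1$. (If we order the letters differently, the same argument applies to the reversed case.) I would therefore instead handle arbitrary words by a direct recursion on the last letter.

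If $\word v=\word w\word 0$, then $\sigXhat[t]^{\word w0}=\int_0^t \sigXhat[r]^{\word w}dr$. The induction hypothesis for $\word w$ gives
\[
\Big|\int_0^t\sigXhat[r]^{\word w}dr\Big|\le C\Big(T+\int_0^t|X_r|^{|\word w|_{\word 1}}dr+\int_0^t\!\!\int_0^r|X_u|^{|\word w|_{\word 1}}du\,dr\Big),
\]
and the double integral is at most $T\int_0^t|X_u|^{|\word w|_{\word 1}}du$. This proves the second bound, which implies the first.

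If $\word v$ ends in $\word 1$, I write $\word v=\word w\word 0\word 1^{k}$ with $\word w\word 0$ possibly empty, and use a shuffle identity to remove the trailing block. The identity is
\[
(\word w\word 0)\shuprod \word 1^{k}=\word w\word 0\word 1^{k}+\text{(words of the same length ending in }\word 0\text{, or with fewer letters after the last }\word 0).
\]
Iterating this on the number of trailing $\word 1$s expresses $\sigXhat[t]^{\word v}$ as
\[
\sigXhat[t]^{\word w0}\,\frac{X_t^{k}}{k!}\;-\;\sum(\text{terms of the same length ending in }\word 0\text{ or with shorter trailing }\word 1\text{-block}).
\]
The terms ending in $\word 0$ are covered by the second bound. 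The main term is bounded by
\[
C\Big(1+\int_0^t|X|^{|\word w|_{\word 1}}\Big)|X_t|^{k}\le C\Big(1+|X_t|^{|\word v|_{\word 1}}+\int_0^t|X|^{|\word v|_{\word 1}}\Big),
\]
via Young's inequality with exponents $|\word v|_{\word 1}/|\word w|_{\word 1}$ and $|\word v|_{\word 1}/k$, combined with Jensen on the integral. A secondary induction on the length of the trailing $\word 1$-block closes this case.

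\textbf{Main obstacle.} The main difficulty is the bookkeeping in this last step. One must check that the shuffle identity produces only words strictly smaller in the chosen well-founded order, namely (length, trailing $\word 1$-block length), and that every product of lower-order bounds can be absorbed into the target using exponents summing to $|\word v|_{\word 1}$. Where this decomposition becomes messy, I would fall back on the Lyndon/Radford factorization as the paper suggests, running the same Young–Jensen absorption argument on its factors.
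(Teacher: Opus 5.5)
Your final argument, the direct recursion on the last letter, is correct, but it is not the paper's route.

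\textbf{The paper's route.} The paper runs the length induction through Radford's theorem. It uses the convention that a Lyndon word is lexicographically \emph{greater} than its rotations, with $\word{1}>\word{0}$. Under that convention every Lyndon word other than $\word{1}$ and $\word{0}$ ends in $\word{0}$. Each such Lyndon coordinate is therefore a $dr$-integral of a shorter coordinate and inherits the integral bound by induction. Products of Lyndon coordinates are then absorbed by a single application of Young's inequality and Jensen. This is the ordering issue you half-noticed in your parenthetical remark. With that choice the Lyndon route closes directly, and no auxiliary quantity like your $A_t$ is needed.

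\textbf{Your route.} You bypass Radford entirely. For $\word{w}\word{0}$ you obtain the second bound directly from the first bound for $\word{w}$. For words ending in $\word{1}$ you use the single identity $(\word{w}\word{0})\shuprod\word{1}^{k}=\word{w}\word{0}\word{1}^{k}+\dots$. In it, $\word{w}\word{0}\word{1}^{k}$ has coefficient exactly one, since all $k$ ones must be placed after the final $\word{0}$. Every other word has the same length and $\word{1}$-count, but a trailing $\word{1}$-block of length $j<k$. A lexicographic induction on (length, trailing-block length) then reduces everything to the product $\sigXhat[t]^{\word{w0}}X_t^k/k!$. Young with exponents $|\word v|_{\word 1}/|\word w|_{\word 1}$ and $|\word v|_{\word 1}/k$, followed by Jensen, handles this product.

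\textbf{Comparison.} Your approach is self-contained and elementary. It also shows transparently that powers of $|X_{s,t}|$ enter only through the trailing $\word{1}$-block. The paper's approach is shorter, at the cost of invoking an external structure theorem on the shuffle algebra.

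\textbf{Minor points to tidy.} When $|\word w|_{\word 1}=0$ the Young exponents degenerate. In that case $\int_0^t|X_r|^0\,dr\le T$ and the bound is immediate, so say so explicitly. Your preliminary ``Reduction to Lyndon words'' paragraph, with its unspecified $A_t$, plays no role in the final argument and should be dropped.
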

\begin{proof}
    The proof is provided in Appendix~\ref{sect:proof_of_sig_bound}.
\end{proof}
 
 This allows us to construct the following examples of path-dependent linear functionals with real part  bounded from above.

\begin{sqexample}[Integrated linear functionals]\label{ex:integrated_func}
    {An integrated functional of the form
    $$
    \bracket{\bm r }{\sigXhat[T]}= \bracket{\bm r'\word 0}{\sigXhat[T]} = \int_0^T \bracket{\bm r'}{\sigXhat[s]}\,ds
    $$
    can be controlled by $\int_0^T|X_s|^{\deg_{\word 1}(\bm r)}\,ds$, in virtue of Lemma \ref{lem:sig_bound}. Similarly to Example \ref{ex:polynomials_of_W}, adding an integrated dominant coefficient as
    \begin{equation}
        \bp := (-\beta\word{1}\conpow{2m} + \br)\word{0}, \qquad \Re(\beta) > 0,\quad  \deg_{\word{1}}(\br) < 2m,
    \end{equation}
    makes $\bracket{\Re(\bp)}{\sigXhat[0,T]}$ bounded from above, since $\bracket{\Re(\bm p)}{\sigXhat[0,T]} \leq C_{\bm p, T} \int_0^T (- \Re(\beta) |X_t|^{2m} +  |X_t|^{\deg_{\word 1}(\bm r)}) dt\,$, which is the integral of a polynomial that is bounded from above over $\R\,$. Remarkably, $\deg_{\word 0}(\bm r)$ is left free, as long as it remains finite.}
\end{sqexample}

\begin{sqexample}[Generic linear functional]\label{ex:generic_funct}
    {Finally, a generic linear functional $\bracket{\bm p}{\sigXhat[0,T]}$ is bounded from above if both dominant terms from Lemma \ref{lem:sig_bound} with even degrees are present. Namely, for some $n, m \in \N$ and $\alpha, \beta \in \C$, $\bp$ has the form
    $$
    \bp = -\alpha \word{1}\conpow{2n} - \beta \word{1}\conpow{2m}\word{0} + \bm s, \qquad \Re(\alpha) > 0, \quad \Re(\beta) > 0, \quad \deg_{\word{1}}(\bm s) < 2m \land 2n.
    $$
    Once more, we only require that $\deg_{\word 0}(\bm s)$ is finite. }
\end{sqexample}

We now define a class encompassing all three examples considered above.

\begin{definition}\label{def:class_B}
    We say that $\bp \in \mathcal{B} \subset T(\C^2)$ if there exist three complex numbers
    $$
    \alpha, \beta \in \{z \in \C\colon \Re(z) > 0\} \cup \{ 0\}, \quad \gamma \in \C,
    $$
    two natural numbers $m, n \in \N^*$, and three coefficients $\bq, \br, \bm s \in T(\C^2)$ satisfying
    \begin{align}\label{eq:class_B_coefs_cond}
        \deg_{\word{1}}(\bq) < 2n,  \quad \deg_{\word{0}}(\bq) = 0, \quad \deg_{\word{1}}(\br) < 2m, \quad \deg_{\word{1}}(\bm s) < 2m \land 2n,
    \end{align}
    such that 
    \begin{equation}\label{eq:class_B_p}
        \bp = -\alpha(\word{1}\conpow{2n} + \bq) - \beta(\word{1}\conpow{2m} + \br)\word{0} + \alpha\beta\bm s + \gamma \emptyword.
    \end{equation}
    {The time degrees $\deg_{\word 0}(\bm r)$ and $\deg_{\word 0}(\bm s)$ are only required to be finite.}
\end{definition}

\begin{sqremark}
    Definition~\ref{def:class_B} recovers all three examples considered above: Example~\ref{ex:polynomials_of_W} when $\beta = 0$, Example~\ref{ex:integrated_func} when $\alpha = 0$, and Example~\ref{ex:generic_funct} when $\alpha, \beta \neq 0$ and $\bq = \br = 0$. Note, however, that the class $\mathcal{B}$ is more flexible than the one given in Example~\ref{ex:generic_funct}, as it allows for the inclusion of the coefficients $\bq$ and $\br$ of degrees greater than $2m \land 2n$.
    In particular, we leverage this property later in Section~\ref{sect:laplace_sig_mart}.
\end{sqremark}

\begin{sqremark}
    The last term in \eqref{eq:class_B_p}, $\gamma\emptyword$, allows for the addition of constant terms to the integrated functionals when $\alpha = 0$ and is used only to ensure stability of the class $\mathcal{B}$ established in Lemma~\ref{lemma:recentering_property_class_B}.
\end{sqremark}

\begin{sqremark}
    Note that while $\alpha$ and $\beta$ can vanish, they are not allowed to be purely imaginary. Having a positive real part is crucial for obtaining the estimates presented later in this section. 
\end{sqremark}

The coefficients from $\mathcal{B}$ define signature polynomials whose real parts are bounded from above when applied to time-augmented signatures. More precisely, the following proposition holds.

\begin{proposition}
\label{prop:boundedness_dot_product_B}
{Fix $T > 0$ and $\bp \in \mathcal B$. There exists a constant $C_T > 0$ such that for any continuous $\R$-valued semimartingale $X = (X_t)_{t \in [0, T]}$ and for all $0 \leq s \leq t \leq T$,}
$$
\bracket{\Re(\bp)}{\widehat{\mathbb{X}}_{s, t}} \leq C_T\,,\, a.s.
$$
\end{proposition}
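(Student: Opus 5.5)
The plan is to expand $\bracket{\Re(\bp)}{\widehat{\mathbb{X}}_{s,t}}$ term by term using the decomposition \eqref{eq:class_B_p}, and to bound every term except the two dominant ones by Lemma~\ref{lem:sig_bound}. Write $x := X_{s,t}$ and $I_k := \int_s^t |X_{s,r}|^k\,dr$. Since the signature is real, $\bracket{\Re(\bp)}{\widehat{\mathbb{X}}_{s,t}} = \Re\bracket{\bp}{\widehat{\mathbb{X}}_{s,t}}$. The dominant terms are then explicit:
\[
\bracket{\word{1}\conpow{2n}}{\widehat{\mathbb{X}}_{s,t}} = \frac{x^{2n}}{(2n)!},
\qquad
\bracket{\word{1}\conpow{2m}\word{0}}{\widehat{\mathbb{X}}_{s,t}} = \int_s^t \frac{X_{s,r}^{2m}}{(2m)!}\,dr = \frac{I_{2m}}{(2m)!}.
\]
Hence the contribution of the dominant part is
\[
-\Re(\alpha)\frac{x^{2n}}{(2n)!} - \Re(\beta)\frac{I_{2m}}{(2m)!},
\]
while the constant term $\gamma\emptyword$ contributes only $\Re(\gamma)$.

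Next I bound the perturbations. For $\bq$, every word has $|\word v|_{\word 0}=0$ and $|\word v|_{\word 1}<2n$, so it is a power of $\word 1$ and contributes $x^k/k!$ with $k<2n$. This gives $|\Re(\alpha \bracket{\bq}{\widehat{\mathbb{X}}_{s,t}})| \le C(1+|x|^{2n-1})$. For $\br\word{0}$, the second bound in Lemma~\ref{lem:sig_bound} applied to each word $\word v\word 0$ with $|\word v|_{\word 1}\le 2m-1$ gives a bound $C(1+I_{2m-1})$, with no $|x|$ term. For $\bm s$, the first bound gives $C(1+|x|^{k}+I_k)$ with $k<2m\wedge 2n$. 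All constants $C_{\word v,T}$ are uniform in $0\le s\le t\le T$ and there are finitely many words, so they combine into one constant depending only on $\bp$ and $T$.

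The remaining step is a Young-type absorption, and the only real care is in the mixed $\bm s$ terms and the degenerate cases. For $y\ge0$, $k<N$ and $\varepsilon>0$, the polynomial inequality $y^k\le \varepsilon y^N + C_\varepsilon$ gives $|x|^k \le \varepsilon x^{2n}+C_\varepsilon$. Applying it inside the integral gives $I_k \le \varepsilon I_{2m} + C_\varepsilon (t-s) \le \varepsilon I_{2m}+C_\varepsilon T$. Choosing $\varepsilon$ small relative to $\Re(\alpha)/(2n)!$ and $\Re(\beta)/(2m)!$ absorbs every perturbation into the negative dominant terms, leaving a bound $C_T$.

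Degenerate cases are handled by the structure of \eqref{eq:class_B_p}:
\begin{itemize}
\item If $\alpha=0$, the $\bq$ term and the $\bm s$ term (which carries the factor $\alpha\beta$) both vanish, so only $I$-terms remain and these are absorbed by $\beta$.
\item If $\beta=0$, the $\br$ term and the $\bm s$ term vanish, and the $\bq$ term is absorbed by $\alpha$.
\end{itemize}
The factor $\alpha\beta$ in front of $\bm s$ is exactly what makes this work: mixed terms containing both $|x|^k$ and $I_k$ appear only when both dominant terms are present. In the main case, the $|x|^k$ part of the $\bm s$ bound is absorbed by $\alpha$ and the $I_k$ part by $\beta$, since $k<2m\wedge 2n$.
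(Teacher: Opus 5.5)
Your proposal is correct and follows essentially the same route as the paper. You split $\bp$ according to \eqref{eq:class_B_p}, control the lower-order words in $\bq$, $\br\word{0}$ and $\bm s$ via Lemma~\ref{lem:sig_bound}, and absorb them into the two even-degree leading terms; the paper does the same by appealing to Examples~\ref{ex:polynomials_of_W}--\ref{ex:generic_funct}. Your explicit $\varepsilon$-absorption is slightly cleaner than the paper's terse argument, which uses the leading terms $-\alpha\word{1}\conpow{2n}$ and $-\beta\word{1}\conpow{2m}\word{0}$ both for the $\bq$/$\br$ pieces and for the $\bm s$ piece without saying that they must be split between them.
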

\begin{proof}
    The polynomial $\widehat{\mathbb{X}} \mapsto\bracket{\Re(-\alpha(\word{1}\conpow{2n} + \bq))}{\widehat{\mathbb{X}}}$ is bounded from above by Example~\ref{ex:polynomials_of_W}, while $\widehat{\mathbb{X}}\mapsto\bracket{\Re(-\beta(\word{1}\conpow{2m} + \br)\word{0})}{\widehat{\mathbb{X}}}$ is bounded by Example~\ref{ex:integrated_func}. Finally, when $\alpha \neq 0$ and $\beta \neq 0$, boundedness of the remaining terms is ensured by both leading terms since $\widehat{\mathbb{X}} \mapsto\bracket{\Re(-\alpha\word{1}\conpow{2n}-\beta\word{1}\conpow{2m} \word{0} + \alpha\beta\bm s + \gamma \emptyword)}{\widehat{\mathbb{X}}}$ is bounded from above by Example~\ref{ex:generic_funct}.
\end{proof}

The class $\mathcal B$ is natural in the following two aspects. First, it is large enough to cover natural applications: the conditions defining $\mathcal B$ are closely related to the necessary and sufficient conditions for Dol\'eans--Dade exponentials of finite linear functions of the Brownian signature to be true martingales (see \cite*{jaber2025martingalepropertymomentexplosions}). In particular, the coefficient $\bp$ arising in Corollary \ref{cor:sig_vol_Fourier_laplace} belongs to $\mathcal B$ precisely under the martingality conditions. Second, $\mathcal B$ is stable under the recentering operation: the following Lemma shows that it is closed under left shifts, defined in \eqref{eq:def_left_X_shift}, which is the signature analogue of the fact that $p_x$ in \eqref{eq:psi_power_series_recentering} has the same leading term as $p\,$. This stability is what makes the recentering argument of Section \ref{section:recentering} work, since recentering replaces $\bp$ by $~_{\sigX[]}|\bp\,$, and one must ensure that the coefficient remains in $\mathcal B$ for the new expansion to be valid.

\begin{lemma}
\label{lemma:recentering_property_class_B}
    If $\bp \in \mathcal{B}$ and $\sigX[] \in T((\C^2))$ such that $\sigX[]^{\emptyword} = 1$, then $~_{\sigX[]}|\bp \in \mathcal{B}$.
\end{lemma}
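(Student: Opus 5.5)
Since ${}_{\sigX[]}|\bp = \sum_{\word v} \sigX[]^{\word v}\cdot {}_{\word v}|\bp$ is linear in $\bp$, and $\bp$ has finite degree, this is a finite sum: only words $\word v$ that are prefixes of some word in the support of $\bp$ contribute. The plan is to write $\bp$ in the form \eqref{eq:class_B_p} and compute the left shift term by term. The $\emptyword$ contribution gives $\bp$ itself, since $\sigX[]^{\emptyword}=1$. Everything else should be absorbed into $\bq,\br,\bm s,\gamma$ without changing $\alpha,\beta,n,m$.

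\textbf{Key facts.} Two elementary properties of left shifts do the work.
\begin{itemize}
\item For any $\bell$ and any nonempty $\word v$, ${}_{\word v}|\bell$ has $\deg_{\word 1}({}_{\word v}|\bell)\le \deg_{\word 1}(\bell)-|\word v|_{\word 1}$ and $\deg_{\word 0}({}_{\word v}|\bell)\le\deg_{\word 0}(\bell)$.
\item For $\bell$ with $\bell^{\emptyword}$ possibly nonzero, ${}_{\word v}|(\bell\word 0) = ({}_{\word v}|\bell)\word 0 + \bell^{\word v}\,\emptyword$ when $\word v\neq\emptyword$. This holds because the only word $\word w$ with $\word v\word w$ equal to $\word u\word 0$ for $\word u=\word v$ is $\word w=\word 0$. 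More carefully, ${}_{\word v}|(\word u\word 0)$ equals $({}_{\word v}|\word u)\word 0$ if $\word v$ is a prefix of $\word u$, equals $\emptyword$ if $\word v=\word u\word 0$, and is $0$ otherwise.
\end{itemize}
So ${}_{\sigX[]}|(\bell\word 0) = ({}_{\sigX[]}|\bell)\word 0 + \bracket{\bell\word 0}{\sigX[]}\,\emptyword$, with both terms computed by finite sums.

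\textbf{Term-by-term computation.}
\begin{itemize}
\item First term: ${}_{\sigX[]}|(\word 1^{\otimes 2n}+\bq) = \word 1^{\otimes 2n} + \bq'$. Here $\bq' := \sum_{k\ge1}\sigX[]^{\word 1^{\otimes k}}\word 1^{\otimes(2n-k)} + {}_{\sigX[]}|\bq$. The shifts of $\word 1^{\otimes 2n}$ by nonempty words containing $\word 0$ vanish. The word $\bq$ has no $\word 0$, so ${}_{\sigX[]}|\bq$ also has no $\word 0$ and $\deg_{\word 1}<2n$. Hence $\deg_{\word 0}(\bq')=0$ and $\deg_{\word 1}(\bq')<2n$.
\item Second term: ${}_{\sigX[]}|\big((\word 1^{\otimes 2m}+\br)\word 0\big) = (\word 1^{\otimes 2m}+\br')\word 0 + c\,\emptyword$. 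Here $\br' := {}_{\sigX[]}|(\word 1^{\otimes 2m}+\br) - \word 1^{\otimes 2m}$ has $\deg_{\word 1}<2m$ by the first key fact, and finite time degree. The scalar is $c=\bracket{(\word 1^{\otimes2m}+\br)\word 0}{\sigX[]}$.
\item Third term: ${}_{\sigX[]}|\bm s$ has $\deg_{\word 1}<2m\wedge 2n$ and finite $\deg_{\word 0}$. Its constant part is harmless.
\item Constant: ${}_{\sigX[]}|(\gamma\emptyword)=\gamma\emptyword$.
\end{itemize}
Collecting terms, ${}_{\sigX[]}|\bp = -\alpha(\word 1^{\otimes 2n}+\bq')-\beta(\word 1^{\otimes 2m}+\br')\word 0+\alpha\beta\, {}_{\sigX[]}|\bm s + (\gamma-\beta c)\emptyword$. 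This is of the form \eqref{eq:class_B_p} with the same $\alpha,\beta,n,m$.

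\textbf{Main obstacle: the degenerate cases.} The delicate point is bookkeeping when $\alpha$ or $\beta$ vanishes.
\begin{itemize}
\item If $\alpha=0$, the terms $\alpha\bq'$ and $\alpha\beta\bm s$ disappear, so there is nothing to absorb.
\item If $\beta=0$, the constant $-\beta c$ vanishes.
\item The constant term of $\alpha\beta\,{}_{\sigX[]}|\bm s$ may be kept inside $\bm s$, since $\deg_{\word 1}$ of a constant is $0<2m\wedge 2n$. Alternatively it can be moved to $\gamma$.
\end{itemize}
The one thing to check explicitly is that the constant produced by shifting the $\word 0$-terminated part, $-\beta c$, is exactly why $\gamma\emptyword$ appears in Definition~\ref{def:class_B}. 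This term cannot be put in $\bq'$ when $\alpha=0$, so it must go into $\gamma$. The data $\sigX[]$ may be complex-valued, but that only makes the coefficients complex, which the class $\mathcal B\subset T(\C^2)$ allows.
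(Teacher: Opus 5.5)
Your proposal is correct and follows essentially the same route as the paper: you expand ${}_{\mathbb{X}}|\bp$ piece by piece from the decomposition \eqref{eq:class_B_p}, keep $\alpha,\beta,n,m$ unchanged, absorb the new terms into $\bq$, $\br$ and $\bm s$, and move the constant $-\beta\,\bracket{(\word{1}^{\otimes 2m}+\br)\word{0}}{\mathbb{X}}$ coming from the $\word{0}$-terminated part into $\gamma$, exactly as the paper does with its $\tilde\gamma$. One small slip: your single-word identity should read ${}_{\word{v}}|(\bell\word{0}) = ({}_{\word{v}}|\bell)\word{0} + (\bell\word{0})^{\word{v}}\,\emptyword$, not $\bell^{\word{v}}\,\emptyword$ (take $\bell=\word{1}$, $\word{v}=\word{1}$: the left side is $\word{0}$, while your formula gives $\word{0}+\emptyword$), but your ``more carefully'' case analysis and the summed identity ${}_{\mathbb{X}}|(\bell\word{0}) = ({}_{\mathbb{X}}|\bell)\word{0} + \bracket{\bell\word{0}}{\mathbb{X}}\,\emptyword$, which is what you actually use, are correct.
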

\begin{proof}
    Since $\bp \in \mathcal{B}$, it can be written in the form \eqref{eq:class_B_p} with the coefficients $\bq, \br,$ and $\bm s$ satisfying \eqref{eq:class_B_coefs_cond}. 
    By the definition of $~_{\sigX[]}|\bp$ in \eqref{eq:def_left_X_shift}, we have 
    \begin{align}
        ~_{\sigX[]}|\bp &=  -\alpha(\word{1}\conpow{2n} + \bq) - \beta(\word{1}\conpow{2m} + \br)\word{0} + \alpha\beta\bm s + \gamma\emptyword\\
        &+ \sum_{0 < |\word{v}| \leq \deg(\bp)}\sigX[]^{\word{v}}\left(-\alpha(~_{\word{v}}|\word{1}\conpow{2n} + ~_{\word{v}}|\bq) - \beta(~_{\word{v}}|(\word{1}\conpow{2m}\word{0} ) + ~_{\word{v}}|(\br\word{0}) )+ \alpha\beta~_{\word{v}}|\bm s\right)
    \end{align}
    Note that for each nonempty $\word{v}$, the term $~_{\word{v}}|(\word{1}\conpow{2m}\word{0}) + ~_{\word{v}}|(\br\word{0})$ is of the form $\tilde \br_{\word{v}}\word{0} + \tilde\gamma_{\word{v}}\emptyword$ for some $\tilde \br_{\word{v}} \in T(\C^2)$ and $\tilde\gamma_{\word{v}} \in \C$ such that $\deg_{\word{1}}(\tilde\br_{\word{v}}) < 2m$. We also observe that $\deg_{\word{1}}(~_{\word{v}}|\word{1}\conpow{2n}+ ~_{\word{v}}|\bq ) < 2n$.
    Hence, $~_{\sigX[]}|\bp$ can also be written in the form 
    $$
        ~_{\sigX[]}|\bp = -\alpha(\word{1}\conpow{2n} + \tilde\bq) - \beta(\word{1}\conpow{2m} + \tilde\br)\word{0} + \alpha\beta\tilde{\bm s} + \tilde\gamma\emptyword,
    $$
    where we define
    \begin{align}
        \tilde\bq &= \bq +  \sum_{0 < |\word{v}| \leq \deg(\bp)}\sigX[]^{\word{v}}(~_{\word{v}}|\word{1}\conpow{2n}+ ~_{\word{v}}|\bq), \quad 
        &&\tilde\br = \br +  \sum_{0 < |\word{v}| \leq \deg(\bp)}\sigX[]^{\word{v}}\tilde \br_{\word{v}}, \\
        \tilde{\bm s} &= \bm s + \sum_{0 < |\word{v}| \leq \deg(\bp)}\sigX[]^{\word{v}}~_{\word{v}}|{\bm s}, \quad 
     &&   \tilde\gamma = \gamma - \beta \sum_{0 < |\word{v}| \leq \deg(\bp)} \sigX[]^{\wv}\tilde\gamma_{\word{v}},
    \end{align}
    with $\tilde\bq, \tilde\br, \tilde{\bm s}$ satisfying \eqref{eq:class_B_coefs_cond}. This completes the proof. 
\end{proof}

\subsection{Signature expansion of the Fourier--Laplace transform}\label{subsection:signature_expansion_u}

{Given that the increments of the Brownian motion are independent, the conditional Fourier--Laplace transform $\E[\exp (\bracket{\bm p}{\sigW[T]})\,|\,\mathcal F_t]$ can be expressed} as a functional of the time-augmented signature up until time $t\,$, thanks to Chen's identity \eqref{eq:Chen}: 
\begin{align}\label{eq:charchen}
\E\left[\exp \left(\bracket{\bm p}{\sigW[T]}\right)\,\bigl|\,\mathcal F_t \right] = u(t, \sigW[t])\,,\quad \text{where}\quad u(t, \sigX[]) := \E\left[\exp \left(\bracket{\bm p}{\sigX[] \otimes \sigW[t,T] }\right) \right]\,, \quad \sigX[] \in G\,.
\end{align}
Our aim in this section is to prove that the function $u$ is in fact linear with respect to $\mathbb X$ with an infinite radius of convergence when $\sigX[]$ ranges over $G$. More precisely, we show that  $u(t, \sigX[]) = \bracket{\bm u_t}{\sigX[]}\,$ for some coefficients $\bm u_t \in \mathcal A_{\infty}\,$. As mentioned in Section~\ref{sect:expansion_u_power_ser}, the difficulty with the path-dependent setting is that there is no Gaussian density to provide regularization. Instead, regularity must come entirely from the functional $\sigX[] \mapsto \exp (\bracket{\bm p}{\sigX[]})$ itself. The class $\mathcal B$ defined in Section \ref{subsection:class_of_coefs} is precisely designed to make this exponential sufficiently integrable.

The core idea is to extend the strategy outlined before Lemma~\ref{lemma:enormous_bound_lite} by exploiting algebraic properties of the signature. Concretely, we expand
$\exp\big(\bracket{\bm p}{\sigX[] \otimes \sigW[t,T]}\big)$
under the expectation as a linear functional on $\mathbb X$ of the form $\bracket{\bm \xi_t}{\sigX[]}$, and then apply Fubini's theorem to interchange expectation and linear functional, yielding
$u(t,\mathbb X) = \bracket{\mathbb E[\bm \xi_t]}{\sigX[]}.$ As already noted in Section~\ref{sect:expansion_u_power_ser}, in order to justify this interchange one should not expand the full expression directly. Instead, one factorizes the linear functionals to isolate the term independent of $\mathbb X$ from higher-order terms in $\mathbb X$. We then keep  the former in exponential form and treat the latter as a formal  signature expansion in $\sigX[]$ using the shuffle product, the natural extension of the Cauchy product to the signature setting.

In our setup, after an application of Proposition~\ref{prop:projections_ppties}, we get 
$$\exp (\bracket{\bm p}{\sigX[] \otimes \sigW[t,T]}) = \exp (\bracket{\bm p |_{\sigW[t,T]}}{\sigX[]}) = \bracket{\shuexp{\bm p|_{\sigW[t,T]}}}{\sigX[]},
$$ {and the leading term that we will factorize is given by $\exp((\bp|_{\sigW[t,T]})^{\emptyword}) = \exp(\bracket{\bp}{\sigW[t,T]})\,$.} {This is a very natural step given the definition \eqref{eq:shuexp_def} of the shuffle exponential.} The precise estimates we obtain before taking the expectation are given in the following lemma.


\begin{lemma}\label{lemma:enormous_bound}
    Let $\bm p \in \mathcal B\,$ be of the form \eqref{eq:class_B_p}, and define
    \begin{equation}
    \label{eq:def_xi}
    \bxi_t := \shuexp{\bm p|_{\sigW[t,T]}} = \exp({\bracket{\bm p}{\sigW[t,T]}})\cdot \shuexp{\overline{\bm p|_{\sigW[t,T]}}}\,.
    \end{equation}
    Then, there exist two constants $C_{\bm p, T}, C_{\bm p, T}' > 0\,$, independent of $W$, such that for all $t\in[0, T]$ and $\sigX[] \in G$,
        \begin{align}\label{eq:enormous1}
        \|\bxi_t\|_{\sigX[]} \leq \exp \left(\bracket{\Re(\bm p)}{\sigW[t,T]} + M_{\bp}(\sigX[])\,\zeta_{\bm p,t, T}\right) \leq \exp\left(C_{\bm p, T} \,(1 + M_{\bm p}(\sigX[]))^{\deg_{\word 1}(\bm p)}\right),  
        \end{align}
        where $M_{\bp}$ is defined in \eqref{eq:def_M_p} and
        $$
        \zeta_{\bm p,t, T} := C_{\bm p, T}'\,\left(1 + |W_T - W_t|^{2n-1}\,\indic{\alpha \neq 0} + \int_{t}^T\,|W_s - W_t|^{2m-1}\,ds\,\indic{\beta \neq 0}  \right),
        $$
        with $n, m$ and $\alpha, \beta$ from the definition \eqref{eq:class_B_p} of $\bp$.
        In particular, $\bxi_t \in \mathcal A_{\infty}$ for all $t \in [0\,,T]\,$, and 
        $$
        \sup_{t \in [0, T]}\,\|\bxi_t\|_{\sigX[]}  < + \infty\,, \quad \sigX[] \in G\,.
        $$
\end{lemma}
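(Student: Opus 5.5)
Split $\bm p|_{\sigW[t,T]}$ into its constant coefficient $\bracket{\bm p}{\sigW[t,T]}$ and its remainder $\overline{\bm p|_{\sigW[t,T]}}$, and handle the two pieces separately. Shuffle-compatibility \eqref{eq:def_shuffle_compatibility_norm_X}, applied term by term in the exponential series, gives $\|\shuexp{\bell}\|_{\sigX[]}\le \exp(\|\bell\|_{\sigX[]})$. Hence
$$\|\bxi_t\|_{\sigX[]}\le \exp\big(\Re\bracket{\bm p}{\sigW[t,T]}\big)\exp\big(\|\overline{\bm p|_{\sigW[t,T]}}\|_{\sigX[]}\big).$$
Because $\bm p$ has finite degree, the remainder $\overline{\bm p|_{\sigW[t,T]}}=\sum_{\word v\neq\emptyword}\bracket{{}_{\word v}|\bm p}{\sigW[t,T]}\,\word v$ only involves words with $1\le|\word v|\le\deg(\bm p)$. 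Therefore
$$\|\overline{\bm p|_{\sigW[t,T]}}\|_{\sigX[]}\le M_{\bm p}(\sigX[])\sum_{1\le|\word v|\le \deg\bm p}\big|\bracket{{}_{\word v}|\bm p}{\sigW[t,T]}\big|,$$
and the first inequality in \eqref{eq:enormous1} reduces to bounding each $|\bracket{{}_{\word v}|\bm p}{\sigW[t,T]}|$ by $\zeta_{\bm p,t,T}$.

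For the bound on left shifts, I would work from \eqref{eq:class_B_p} and use the computation already done in the proof of Lemma~\ref{lemma:recentering_property_class_B}. For a nonempty $\word v$:
\begin{itemize}
\item ${}_{\word v}|(\word 1^{\otimes 2n}+\bq)$ has $\word 1$-degree at most $2n-1$ and contains no $\word 0$.
\item ${}_{\word v}|((\word 1^{\otimes 2m}+\br)\word 0)$ is either of the form $\tilde\br_{\word v}\word 0$ with $\deg_{\word 1}\le 2m-1$, or a constant.
\item ${}_{\word v}|\bm s$ has $\word 1$-degree less than $2m\wedge 2n$.
\end{itemize}
Now apply Lemma~\ref{lem:sig_bound} word by word, using the second line of \eqref{eq:sig_bound} for the words ending in $\word 0$. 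This bounds the $\alpha$-part by $C(1+|W_{t,T}|^{2n-1})$; the integral term is absent because these words contain no $\word 0$ and so are just $W_{t,T}^k/k!$. It bounds the $\beta$-part by $C(1+\int_t^T|W_{t,s}|^{2m-1}ds)$. Lower powers are absorbed using $|x|^k\le 1+|x|^{j}$ for $k\le j$ and Jensen/Hölder on $[t,T]$. The $\alpha\beta\bm s$ part is dominated by either term, and the indicators appear because the corresponding pieces vanish when $\alpha$ or $\beta$ is zero. This yields $\zeta_{\bm p,t,T}$ with constants uniform in $t\le T$.

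For the second inequality, I would show that $\Re\bracket{\bm p}{\sigW[t,T]}+M\zeta$ is bounded above, uniformly over paths and over $t$, by a power of $1+M$, writing $M:=M_{\bm p}(\sigX[])$. The refined form of Proposition~\ref{prop:boundedness_dot_product_B} (Examples~\ref{ex:polynomials_of_W}--\ref{ex:generic_funct} together with Lemma~\ref{lem:sig_bound}) gives
$$\Re\bracket{\bm p}{\sigW[t,T]}\le C-c\,|W_{t,T}|^{2n}\indic{\alpha\neq0}-c\int_t^T|W_{t,s}|^{2m}ds\,\indic{\beta\neq0},\qquad c>0.$$
This strengthened inequality, with the negative leading terms kept rather than merely bounded, is the point I expect to be the main obstacle. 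It requires redoing the upper bound so that the lower-order terms $\bq$, $\br$, $\alpha\beta\bm s$, including mixed words with extra $\word 0$'s and the interaction of $\bm s$ with both leading terms, are absorbed into half of the leading terms via Young's inequality.

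Given that inequality, Young's inequality gives $Mx^{2n-1}-cx^{2n}\le C M^{2n}$. Applied pointwise in $s$ and integrated, the same argument gives $M\int_t^T y^{2m-1}\,ds-c\int_t^T y^{2m}\,ds\le C T M^{2m}$. Together these give a bound $C(1+M)^{\max(2n,2m)}$, and $\max(2n,2m)\le\deg_{\word 1}(\bm p)$. Finally, since the bound is finite and independent of $t$ and $W$, we get $\bxi_t\in\mathcal A_{\sigX[]}$ for every $\sigX[]\in G$, hence $\bxi_t\in\mathcal A_\infty$ with $\sup_t\|\bxi_t\|_{\sigX[]}<\infty$.
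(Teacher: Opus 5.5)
Your proposal is correct and follows the paper's proof essentially step by step. The common steps are: shuffle-compatibility to factor out $\exp(\bracket{\Re(\bm p)}{\sigW[t,T]})$; a word-by-word application of Lemma~\ref{lem:sig_bound} to the nonempty-word coefficients of $\bm p|_{\sigW[t,T]}$ to get $M_{\bp}(\sigX[])\,\zeta_{\bm p,t,T}$; and the elementary bound on $-\gamma x^{2n}+\delta x^{2n-1}$ for the second inequality. The step you flag as the main obstacle is not really one: the paper keeps the exact leading terms $-\frac{\Re(\alpha)}{(2n)!}|W_T-W_t|^{2n}$ and $-\frac{\Re(\beta)}{(2m)!}\int_t^T|W_s-W_t|^{2m}ds$, bounds the contributions of $\bq$, $\br\word{0}$ and $\alpha\beta\bm s$ by the same word-by-word estimates you already derived, and absorbs them together with $M_{\bp}(\sigX[])\,\zeta_{\bm p,t,T}$ in a single regrouping.
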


\begin{proof}
    Let $\sigX[] \in G\,$. Since the seminorm $\| \cdot\|_{\sigX[]}$ is shuffle-compatible (see \eqref{eq:def_shuffle_compatibility_norm_X}), we get that $\|\shuexp{\bell}\|_{\sigX[]} \leq \exp(\|\bell\|_{\sigX[]})$, so that 
    \begin{align}\label{eq:proofxi}
    \|\bm \xi_t\|_{\sigX[]} \leq \exp\left(\Re\left(\bracket{\bm p}{\sigW[t,T]}\right) + \left\|\overline{\bm p |_{\sigW[t,T]}}\right\|_{\sigX[]}\right) =\exp\left(\bracket{\Re(\bm p)}{\sigW[t,T]}+ \left\|\overline{\bm p |_{\sigW[t,T]}}\right\|_{\sigX[]}\right)  \,, \quad t \in [0, T].    
    \end{align}
    We now bound the quantity $\left\|\overline{p |_{\sigW[t,T]}}\right\|_{\sigX[]}\,$.
    Recalling that $\overline{\bell} = \bell - \bell^{\emptyword}\emptyword\,$, we have $\left(\overline{\bm p|_{\sigW[t,T]}}\right)^{\emptyword} = 0\,$, and for $\wv \neq \emptyword$, $\left(\overline{\bm p|_{\sigW[t,T]} }\right)^{\word v} = \left(\bm p|_{\sigW[t,T]}\right)^{\word v} = \sum_{\word w} \bm p^{\word{vw}}\,\sigW[t,T]^{\word w}$ from \eqref{eq:def_right_X_shift}. This gives us the following form for the seminorm
\begin{align*}
        \left\|\overline{\bm p |_{\sigW[t,T]}}\right\|_{\sigX[]} = \sum_{n = 1}^{+ \infty}\, \left|\sum_{|\word v| = n}\,(\bm p|_{\sigW[t,T]} )^{\word v}\, \sigX[]^{\word v} \right| 
        = \sum_{n = 1}^{+ \infty}\, \left|\sum_{|\word v| = n}\,\sum_{\word w}\,\bm p^{\word{vw}}\,\sigW[t,T]^{\word w} \, \sigX[]^{\word v} \right|\,, \quad t \in [0, T],
    \end{align*}
    which is a finite sum since $\bm p$ has finite degree. The latter can be bounded as follows
    \begin{equation}
    \label{eq:bound_norm_p_proj_Y_before_B}
    \left\|\overline{\bm p|_{\sigW[t,T]}}\right\|_{\sigX[]} \leq \sum_{|\word v| = 1}^{\deg(\bm p)}\, \sum_{|\word w| = 0}^{\deg(\bm p) - |\word v|}\,\left|\bm p^{\word{vw}}\, \sigW[t,T]^{\word w}\,\sigX[]^{\word{v}} \right| \leq M_{\bm p}(\sigX[])\,\sum_{|\word v| = 1}^{\deg(\bm p)}\, \sum_{|\word w| = 0}^{\deg(\bm p) - |\word v|}\,\left|\bm p^{\word{vw}}\, \sigW[t,T]^{\word w} \right|\,, \quad t \in [0, T],
    \end{equation}
so that we need to bound the non-zero terms $|\bm p^{\wv \word w} \sigW[t,T]^{\word v}|$ for $\word v$ non-empty. From Definition \ref{def:class_B}, we write $\bm p$ as 
\begin{equation}
    \label{eq:form_of_p_proof_lemma}
    \bp = - \alpha (\word 1^{\otimes 2n} + \bq) - \beta (\word 1^{\otimes 2m} + \br)\word 0 + \alpha \beta \bm s + \gamma \emptyword\,.
\end{equation}
We will use the structure of $\bm p$ in \eqref{eq:form_of_p_proof_lemma} along with Lemma \ref{lem:sig_bound} to control the signature coefficients.
 Let $\word v \neq \emptyword\,$, and $\word w$ such that $\bm p^{\word{vw}} \neq 0\,$. Several cases arise  according to where the word $\word{vw}$ appears in the decomposition \eqref{eq:form_of_p_proof_lemma}:
\begin{itemize}
        \item If $\alpha \neq 0$ and $\word{v w}$ appears in $\word 1^{\otimes 2n} + \bm q\,$, then $\word w = \word 1^{\otimes k}$ for some $k < 2n - 1\,$. Thus there exists a constant $C_{\word w} \geq 0$ depending on $\word w$ such that
        $$
        |\sigW[t,T]^{\word w}| \leq C_{\word w}\,\left(1 + |W_T - W_t|^{2n-1}\right)\,, \quad t \in [0, T].
        $$
        \item If $\beta \neq 0$ and $\word{v w}$ appears in $\word 1^{\otimes 2m}\word 0 + \bm r\word 0\,$, then  $|\word w|_{\word 1} \leq 2m-1\,$. From Lemma \ref{lem:sig_bound}, there exists a constant $C_{\word w, T} \geq 0$ such that 
        $$
        |\sigW[t,T]^{\word w}| \leq C_{\word w, T} \,\left(1 + \int_{t}^T\,\left| W_s - W_t\right|^{2m - 1}\,ds \right)\,, \quad t \in [0, T].
        $$
        \item If $\alpha \beta \neq 0$ and $\word{v w}$ appears in $\bm s\,$, then $|\word w|_{\word 1} \leq 2(n \wedge m) - 1\,$, and from Lemma \ref{lem:sig_bound} 
        $$
        |\sigW[t,T]^{\word w}| \leq C_{\word w, T}\,\left(1 + |W_T - W_t|^{2(n\wedge m)-1} + \int_{t}^T\,|W_s - W_t|^{2(n \wedge m)-1}\,ds \right)\,, \quad t \in [0, T].
        $$
    \end{itemize}
    Combining all the above cases, we can find a constant $C_{\bm p, T}' \geq 0\,$, such that  
    $$
    |\bm p^{\word v \word w}\sigW[t,T]^{\word w}| \leq C_{\bm p, T}'\,\left(1 + |W_T - W_t|^{2n-1}\,\indic{\alpha \neq 0} + \int_{t}^T\,|W_s - W_t|^{2m-1}\,ds\,\indic{\beta \neq 0} \right)\,, \quad t \in [0, T]\,,
    $$
    for any $\word v \neq \emptyword$ and $\word w \in V\,$.
    Plugging this into \eqref{eq:bound_norm_p_proj_Y_before_B} leads to
    \begin{align*}
    \left\| \overline{\bm p |_{\sigW[t,T]}}\right\|_{\sigX[]} &\leq M_{\bm p}(\sigX[])\,C_{\bm p, T}'\,\left(1 + |W_T - W_t|^{2n-1}\,\indic{\alpha \neq 0} + \int_{t}^T\,|W_s - W_t|^{2m-1}\,ds\,\indic{\beta \neq 0}  \right) \\
    &=: M_{\bm p}(\sigX[])\zeta_{\bm p,t, T}, \quad t \in [0, T]\,,
    \end{align*}
    for another, not relabeled, nonnegative constant $C_{\bm p, T}'$.  Combining with \eqref{eq:proofxi} we obtain the first inequality in \eqref{eq:enormous1}.
    
    We now prove the second inequality in \eqref{eq:enormous1}. To this end, we exploit the presence of the term $\bracket{\Re(\bm p)}{\sigW[t,T]}$ in \eqref{eq:proofxi}  and observe that the decomposition \eqref{eq:form_of_p_proof_lemma} leads to
    \begin{equation}\label{eq:big_align_bound}
     \begin{aligned}
        \bracket{\Re(\bm p)}{\sigW[t,T]} + M_{\bp}(\sigX[])\,\zeta_{\bm p,t, T} &= -\frac{\Re(\alpha)}{(2n)!} \,|W_T - W_t|^{2n} - \frac{\Re(\beta)}{(2m)!}\,\int_t^T\,|W_s - W_t|^{2m}\,ds \\
        &\quad - \Re(\bracket{\alpha\bm q + \beta\bm r\word 0 - \alpha \beta \bm s - \gamma \emptyword}{\sigW[t,T]}) + M_{\bp}(\sigX[])\zeta_{\bm p,t, T} \\
        &\leq -\frac{\Re(\alpha)}{(2n)!} \,|W_T - W_t|^{2n} - \frac{\Re(\beta)}{(2m)!}\,\int_t^T\,|W_s - W_t|^{2m}\,ds \\
        &\quad + C_{\bm p, T}'\,(1  + M_{\bm p}(\sigX[]))\,\Bigl(1 + |W_T - W_t|^{2n-1}\,\indic{\alpha \neq 0}\\
        &\quad+ \int_{t}^T\,|W_s - W_t|^{2m-1}\,ds \,\indic{\beta \neq 0} \Bigr)\,, \quad t \in [0, T]\,,
    \end{aligned}   
    \end{equation}
   where the inequality follows from the same arguments as above to bound $\Re(\bracket{\alpha\bm q + \beta\bm r\word 0 - \alpha \beta \bm s - \gamma \emptyword}{\sigW[t,T]})$, up to a possibly different constant $C_{\bm p,T}' \geq 0$, which we do not relabel. Recall that $\alpha\,,\beta \in \{z \in \C\,:\, \Re(z) > 0\} \cup \{0\}$ in Definition \ref{def:class_B}, so that $\indic{\alpha \neq 0} = \indic{\Re(\alpha) > 0}\,$ and $\indic{\beta \neq 0} = \indic{\Re(\beta) > 0}\,$.
   
   We are going to use the leading terms $- |W_T - W_t|^{2n}$ and $-\int_t^T |W_s -W_t|^{2m}ds$ in \eqref{eq:big_align_bound} to bound the previous uniformly in $W\,$. For $n \geq 1\,$, $\gamma > 0$ and $\delta \geq 0\,$, the function $x \mapsto -\gamma x^{2n} + \delta x^{2n-1}$ is bounded on $\R$ by $\delta^{2n} / \gamma^{2n - 1}\,$.      {Regrouping the terms in \eqref{eq:big_align_bound} and applying this bound twice, we obtain
    \begin{align*}
        \bracket{\Re&(\bm p)}{\sigW[t,T]} + M_{\bp}(\sigX[])\,\zeta_{\bm p,t, T}  \\ &\leq 
    \tilde C_{\bp, T}\left(\indic{\Re(\alpha) > 0}  (1 + M_{\bp}(\sigX[]))^{2n} + \indic{\Re(\beta) > 0}\,(1 + M_{\bp}(\sigX[]))^{2m} + (1 + M_{\bp}(\sigX[]))\right) \\
    &\leq
    C_{\bp, T}(1 + M_{\bp}(\sigX[]))^{2n \vee 2m} = C_{\bp, T} (1 + M_{\bp}(\sigX[]))^{\deg_{\word 1}(\bp)},
    \end{align*}
    for some constants $\tilde C_{\bp, T}, C_{\bm p, T} \geq 0$.
    {Injecting this in the first inequality in \eqref{eq:enormous1} finishes the proof.}
    }
\end{proof}

The conclusion of Lemma \ref{lemma:enormous_bound} is that $\|\bm \xi_t \|_{\sigX[]}$ is controlled by $M_{\bm p}(\sigX[])$ uniformly in $t \in [0, T]$ and in $\sigW[t,T]$. This allows one to take the expectation of $\bm \xi_t$ and interchange with the infinite sum, yielding $\bm u_t := \E[\bm \xi_t] \in \mathcal{A}_{\infty}\,$.

\begin{theorem}\label{theorem:analyticity_general}
    Let $\bm p \in \mathcal B\,$, and $\bm \xi$ be as in \eqref{eq:def_xi}. Then, defining $\bm u_t := \E[\bm \xi_t]$ for all $t \in [0\,,T]\,$, we have:
    \begin{enumerate}
        \item $\bm u_t \in \mathcal A_{\infty}$ for all $t \in [0\,,T]\,$, so that
        \begin{align}\label{eq:charsigX}
         \E \left[\exp\left(\bracket{\bm p}{\sigX[] \otimes \sigW[t,T]}\right) \right] = \bracket{\bm u_t}{\sigX[]}\,, \quad \sigX[] \in G\,, \quad t \in [0, T]\,.
    \end{align}
    In particular,
\begin{align}\label{eq:charsigexpansion}
\E\left[\exp \left(\bracket{\bm p}{\sigW[T]}\right)\, \Big|\, \mathcal F_t\right] = \bracket{\bu_t}{\sigW[t]}, \quad t \in [0, T].     \end{align}
    \item For all $\sigX[] \in G\,$, $\sup_{t \in [0, T]}\,\|\bm u_t\|_{\sigX[]} \leq \exp\left(C_{\bm p, T} \,(1 + M_{\bm p}(\sigX[]))^{\deg_{\word 1}(\bm p)}\right) < + \infty\,$.
    \item $t \mapsto \bm u_t^{\emptyword}$ is continuous on $[0\,,T]\,$. 
    \end{enumerate}
\end{theorem}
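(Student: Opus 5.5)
The plan is to reduce everything to Lemma~\ref{lemma:enormous_bound} and a Fubini argument, level by level in the seminorm $\|\cdot\|_{\sigX[]}$.

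\textbf{Well-posedness of $\bm u_t$ and the expansion.} First I check that $\bm u_t := \E[\bm\xi_t]$ is well defined coordinatewise. Fix $\sigX[]\in G$ and $t$. The deterministic bound of Lemma~\ref{lemma:enormous_bound} gives $\sum_{k\ge0}\bigl|\sum_{|\word v|=k}\bxi_t^{\word v}\sigX[]^{\word v}\bigr|\le \exp\bigl(C_{\bp,T}(1+M_{\bp}(\sigX[]))^{\deg_{\word 1}(\bp)}\bigr)$ almost surely. For a fixed word $\word v$, take the group-like element given by dilating the signature of a piecewise linear path, as in the proof of Lemma~\ref{lem:uniq_coef}. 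Differentiating in the dilation parameter, or using the Cauchy estimate for the analytic function $\varepsilon\mapsto\bracket{\bxi_t}{\sigX[]^\varepsilon}$, then bounds $|\bxi_t^{\word v}|$ by a deterministic constant. So each coordinate is integrable and $\bm u_t^{\word v}=\E[\bxi_t^{\word v}]$ makes sense. A simpler route is to bound $|\bxi_t^{\word v}|$ directly: it is a finite combination of products of coordinates of $\bp|_{\sigW[t,T]}$ times $\exp(\bracket{\bp}{\sigW[t,T]})$. By Lemma~\ref{lem:sig_bound} it is dominated by $\exp(\bracket{\Re\bp}{\sigW[t,T]})$ times a polynomial in $|W_{t,T}|$ and $\int_t^T|W_{t,s}|^j ds$, and this is bounded by the same absorption argument used at the end of Lemma~\ref{lemma:enormous_bound}.

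Next, for each level $k$, $\bigl|\sum_{|\word v|=k}\bm u_t^{\word v}\sigX[]^{\word v}\bigr| = \bigl|\E\sum_{|\word v|=k}\bxi_t^{\word v}\sigX[]^{\word v}\bigr|\le\E\bigl|\sum_{|\word v|=k}\bxi_t^{\word v}\sigX[]^{\word v}\bigr|$, because each level is a finite sum. Summing over $k$ and applying Tonelli together with the uniform bound of Lemma~\ref{lemma:enormous_bound} gives $\|\bm u_t\|_{\sigX[]}\le \exp\bigl(C_{\bp,T}(1+M_{\bp}(\sigX[]))^{\deg_{\word 1}(\bp)}\bigr)$ uniformly in $t$. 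This proves item 2 and $\bm u_t\in\mathcal A_\infty$.

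For \eqref{eq:charsigX} I use Proposition~\ref{prop:projections_ppties} and the shuffle property. Since $\bp$ has finite degree, $\exp(\bracket{\bp}{\sigX[]\otimes\sigW[t,T]})=\exp(\bracket{\bp|_{\sigW[t,T]}}{\sigX[]})=\bracket{\bxi_t}{\sigX[]}$. The last equality uses $\exp(\bracket{\bell}{\sigX[]})=\bracket{\shuexp{\bell}}{\sigX[]}$ for group-like $\sigX[]$, which is valid since $\bp|_{\sigW[t,T]}$ has finite degree. The same level-wise dominated convergence (the dominating series has integrable, indeed bounded, total mass) then lets me exchange $\E$ and the sum over levels: $\E\bracket{\bxi_t}{\sigX[]}=\bracket{\bm u_t}{\sigX[]}$. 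For \eqref{eq:charsigexpansion}, I combine Chen's identity \eqref{eq:charchen} with the independence of $\sigW[t,T]$ from $\mathcal F_t$, noting that $\sigW[t]\in G$ almost surely. The freezing lemma then gives $\E[\exp\bracket{\bp}{\sigW[t]\otimes\sigW[t,T]}\mid\mathcal F_t]=u(t,\sigW[t])=\bracket{\bm u_t}{\sigW[t]}$.

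\textbf{Continuity of $\bm u^{\emptyword}_t$.} Here $\bm u_t^{\emptyword}=\E[\bxi_t^{\emptyword}]=\E[\exp(\bracket{\bp}{\sigW[t,T]})]$. The map $t\mapsto\sigW[t,T]^{\word w}$ is almost surely continuous for every word $\word w$, by continuity of iterated Stratonovich integrals, or via Chen's relation $\sigW[t,T]=\sigW[t]^{-1}\otimes\sigW[T]$. Since $\bp$ has finite degree, $t\mapsto\exp(\bracket{\bp}{\sigW[t,T]})$ is almost surely continuous. Proposition~\ref{prop:boundedness_dot_product_B} bounds its modulus by $e^{C_T}$ uniformly in $t$, and dominated convergence gives continuity.

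\textbf{Main obstacle.} The one delicate point is justifying the Fubini exchange. The pairing is only conditionally summable within a level but absolutely summable across levels, so the exchange must be done level by level, which is why the seminorm $\|\cdot\|_{\sigX[]}$ is the right object. Integrability of each individual coordinate $\bxi_t^{\word v}$ is what I would verify most carefully, using the direct polynomial-times-exponential bound described above.
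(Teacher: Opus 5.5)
Your proposal is correct and follows essentially the same route as the paper. The deterministic bound of Lemma~\ref{lemma:enormous_bound} justifies exchanging $\E$ with the level-wise sums, Chen's identity \eqref{eq:charchen} gives the conditional statement, and dominated convergence with $|\bxi_t^{\emptyword}|\le e^{C_{\bp,T}}$ gives continuity of $t\mapsto\bm u_t^{\emptyword}$. Your explicit check that each coordinate $\bxi_t^{\word v}$ is integrable, and your level-by-level treatment of the exchange, are details the paper's one-line Fubini step leaves implicit.
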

\begin{proof} We first notice that \eqref{eq:charsigexpansion} readily follows from \eqref{eq:charsigX} for $\mathbb X = \widehat{\mathbb W}_t$ thanks to  \eqref{eq:charchen}.
We recall that $\exp (\bracket{\bm p}{\sigX[] \otimes \sigW[t,T]}) = \bracket{\bm \xi_t}{\sigX[]}\,$, which is well-defined in virtue of Lemma \ref{lemma:enormous_bound}. Hence, for any $\sigX[] \in G\,$,
$$
\sup_{t \in [0, T]}\,\|\bm u_t\|_{\sigX[]} \leq \E\big[\sup_{t \in [0, T]}\, \|\bm \xi_t\|_{\sigX[]}\big] \leq \exp\left(C_{\bm p, T} \,(1 + M_{\bm p}(\sigX[]))^{\deg_{\word 1}(\bm p)}\right)  < + \infty\,,
$$
and by Fubini's theorem,
$$
\bracket{\bm u_t}{\sigX[]} = \bracket{\E[\bm \xi_t]}{\sigX[]} = \E[\bracket{\bm \xi_t}{\sigX[]}] = \E\left[\exp (\bracket{\bm p}{\sigX[] \otimes \sigW[t,T]})\right]\,,
$$
which concludes the proof of the first two points. Finally, we remark that 
$$
\bm u_t^{\emptyword} = \E[\bm \xi_t^{\emptyword}] = \E[\exp (\bracket{\bm p}{\sigW[t,T]})]\,, \quad t \in [0, T]\,,
$$
is the expectation of a continuous function. Using the domination 
$$
|\bm \xi_t^{\emptyword}| = \|\bm \xi_t\|_{\emptyword} \leq \exp\left(C_{\bm p, T} \,(1 + M_{\bm p}(\emptyword))^{\deg_{\word 1}(\bm p)}\right) = \exp(C_{\bp, T})\,, \quad t \in [0, T]\,,
$$
we conclude by the dominated convergence theorem that $t \mapsto \bm u_t^{\emptyword}$ is continuous on $[0,T]$.
\end{proof}

To our knowledge, Theorem \ref{theorem:analyticity_general} is the first instance of a convergent expansion $\E[\exp \bracket{\bm p}{\sigW[T]}\,\vert\,\mathcal F_t] = \bracket{\bm u_t}{\sigW[t]}$ in the full path-dependent setting. We stress that proving convergence of this infinite sum would not have been possible without taking $\bm p$ in an appropriate class, such as $\mathcal B\,$.

\subsection{Local signature expansion of the log-Fourier--Laplace transform}
\label{subsec:existence_Riccati_stopping_time}

We now aim to obtain a similar signature expansion for the log-Fourier--Laplace transform, i.e.
$$
\E \left[\exp \left(\bracket{\bm p}{\sigX[] \otimes \sigW[t,T]}\right) \right] = \exp (\bracket{\bm \psi_t}{\sigX[]})\,, \quad t \in [0, T]\,.
$$
This can be achieved {in a neighborhood of $\mathbb{X} = \emptyword$} whenever $\bm u_t^{\emptyword} = u(t, \emptyword) = \E[\exp (\langle \bm p\,, \sigW[t,T]\rangle)]$ is non-zero, which is always true when $\bm p$ has real-valued coefficients or when $T-t$ is small enough.

As in Section \ref{subsec:power_series_psi}, $\bm \psi$ is constructed algebraically from $\bm u\,$, via the tensor algebra analogue of \eqref{eq:psi_def_power_series}, the shuffle logarithm.
For $\bm u$ given by Theorem \ref{theorem:analyticity_general}, since $t \mapsto \bm u_t^{\emptyword}$ is continuous, and assuming that the latter never hits zero on $[0\,,T]\,$, we choose the unique {complex logarithm  $\bm \psi^{\emptyword} := \log \bm u^{\emptyword}$ such that $\bu_t^{\emptyword} = \exp(\bpsi_t^{\emptyword})$ for all $t \in [0, T]\,$, $t \mapsto \bpsi_t^{\emptyword}$ is continuous, and  $\bm \psi_T^{\emptyword} = \bm p^{\emptyword}\,$}. This yields
\begin{equation}
\label{eq:def_log_shuffle_psi}
\bm \psi_t := \log^{\shuprod} (\bm u_t) = \bm \psi_t^{\emptyword} \emptyword + \sum_{n \geq 1}\frac{(-1)^{n-1}}{n} \left(\frac{\overline{\bm u}_t}{\bm u^{\emptyword}_t} \right)\shupow{n}, \quad t \in [0, T]\,.
\end{equation}

Similarly to the power series case, the expansion of the logarithm generally has a finite radius of convergence.

\begin{theorem}\label{theorem:log_analyticity_general}
    Let $\bm p \in \mathcal B\,$, and $\bm u$ be as in Theorem \ref{theorem:analyticity_general}. Assume that $\bu_t^{\emptyword} =\E[\exp(\bracket{\bm p}{\sigW[t,T]})] \neq 0$ for all $t \in [0\,,T]\,$, and define  $\bm \psi := \log^{\shuprod}(\bm u)$ as in \eqref{eq:def_log_shuffle_psi}. It holds that:
    \begin{enumerate}
        \item For all $t \in [0,T]$ and $\sigX[] \in G$ such that $\|\bm u_t\|_{\sigX[]} < 2 |\bm u_t^{\emptyword}|\,$, we have $\bm \psi_t \in \mathcal A_{\sigX[]}$ and 
        $$
        \E\left[\exp\left(\bracket{\bm p}{\sigX[] \otimes \sigW[t,T]}\right)\right] = \exp (\bracket{\bm \psi_t}{\sigX[]})\,.
        $$
        \item For any $\varepsilon \in (0\,,1)\,$, there exists $r = r(\bm p, T, \varepsilon) >0\,$, such that $\|\bm u_t\|_{\sigX[]} \leq (2-\varepsilon)|\bm u_t^{\emptyword}|$ for all $t \in [0\,,T]$ and $\sigX[] \in G_{\bm p, r}\,$, so that $\bm \psi_t \in \mathcal A_{\bm p, r}\,$. Moreover,
        $$
        \sup_{t \in [0, T]} \|\bm \psi_t\|_{\sigX[]} \leq \sup_{t \in [0, T]} |\bm \psi_t^{\emptyword}| + \log\left({\varepsilon}^{-1}\right) < + \infty\,, \quad \sigX[] \in G_{\bm p, r}.
        $$
        The Fourier--Laplace transform {thus} reads 
        \begin{align}\label{eq:logcharsigexpansion}
        \E\left[\exp \left(\bracket{\bm p}{\sigW[T]}\right) \, |\, \mathcal F _t\right] = \exp(\bracket{\bpsi_t}{\sigW[t]}),     
        \end{align}
        for all $t \in [0, T]$ such that $|M_{\bp}(\sigW)| \leq r$. In particular, \eqref{eq:logcharsigexpansion} holds for all  $t \in \llbracket  0, \tau \rrbracket$, where $\tau$ is a positive stopping time given by
        \begin{equation}\label{eq:tau_def_log_fourier}
                \tau := \inf\{t \geq 0\colon\ |M_{\bm p}(\sigW[t])| \geq r \}.
        \end{equation}
        \item $t \mapsto \bm \psi_t^{\emptyword}$ is continuous on $[0\,, T]\,$.
    \end{enumerate}
\end{theorem}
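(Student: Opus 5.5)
The plan is to work entirely with the shuffle-compatible seminorm $\|\cdot\|_{\sigX[]}$ and to treat $\log^{\shuprod}$ exactly like the scalar series $\log(1+z)$.

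\textbf{Point 1.} Fix $t$ and $\sigX[]\in G$ with $\|\bu_t\|_{\sigX[]}<2|\bu_t^{\emptyword}|$. Since $\|\cdot\|_{\sigX[]}$ is the sum over levels of absolute values of level sums, and the level-zero contribution is $|\bu_t^{\emptyword}|$, we get
\[
\|\overline{\bu}_t\|_{\sigX[]} = \|\bu_t\|_{\sigX[]}-|\bu_t^{\emptyword}| < |\bu_t^{\emptyword}|.
\]
Setting $\bell := \overline{\bu}_t/\bu_t^{\emptyword}$, this reads $\|\bell\|_{\sigX[]}<1$. Submultiplicativity \eqref{eq:def_shuffle_compatibility_norm_X} then gives $\|\bell\shupow{n}\|_{\sigX[]}\le \|\bell\|_{\sigX[]}^n$. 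Hence the series \eqref{eq:def_log_shuffle_psi} converges absolutely in $\|\cdot\|_{\sigX[]}$, and
\[
\|\bpsi_t\|_{\sigX[]}\le |\bpsi_t^{\emptyword}|-\log(1-\|\bell\|_{\sigX[]}).
\]
One point needs care here. Each coordinate of $\bpsi_t$ is a finite sum, since $\bell^{\emptyword}=0$. So the level-$k$ sum of $\bpsi_t$ is the sum over $n$ of the level-$k$ sums of $\bell\shupow{n}/n$, and the triangle inequality per level, followed by summing over $k$, lets us interchange the two sums. Pairing with $\sigX[]$ and using \eqref{eq:shuffle_generic} iteratively gives $\bracket{\bell\shupow{n}}{\sigX[]}=\bracket{\bell}{\sigX[]}^n$. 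Therefore
\[
\bracket{\bpsi_t}{\sigX[]}=\bpsi_t^{\emptyword}+\log\bigl(1+\bracket{\bell}{\sigX[]}\bigr),
\]
with the principal branch, valid because $|\bracket{\bell}{\sigX[]}|<1$. Exponentiating and using $e^{\bpsi_t^{\emptyword}}=\bu_t^{\emptyword}$ yields $\exp\bracket{\bpsi_t}{\sigX[]}=\bracket{\bu_t}{\sigX[]}$. By Theorem~\ref{theorem:analyticity_general}, this equals $\E[\exp\bracket{\bp}{\sigX[]\otimes\sigW[t,T]}]$.

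\textbf{Point 2.} This is the main obstacle: we need a bound on $\|\overline{\bu}_t\|_{\sigX[]}$ that is small uniformly in $t$ as $M_{\bp}(\sigX[])\to0$. I would return to Lemma~\ref{lemma:enormous_bound} and refine the first inequality there. Writing $\bxi_t=e^{\bracket{\bp}{\sigW[t,T]}}\shuexp{\overline{\bp|_{\sigW[t,T]}}}$, the non-constant part satisfies
\[
\|\overline{\bxi}_t\|_{\sigX[]}\le e^{\bracket{\Re\bp}{\sigW[t,T]}}\bigl(e^{M_{\bp}(\sigX[])\zeta_{\bp,t,T}}-1\bigr),
\]
which follows from $\|\overline{\shuexp{\bm m}}\|\le e^{\|\bm m\|}-1$ for $\bm m^{\emptyword}=0$. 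Next use $e^{x}-1\le x e^{x}$ together with the absorption argument in the proof of Lemma~\ref{lemma:enormous_bound}, applied now with $2M$ in place of $M$ to absorb the extra factor $\zeta$. This gives, for $M_{\bp}(\sigX[])\le 1$,
\[
\|\overline{\bxi}_t\|_{\sigX[]}\le M_{\bp}(\sigX[])\,K_{\bp,T}.
\]
Here $K_{\bp,T}$ is deterministic and independent of $t$: the bound $\zeta e^{\Re\langle\bp,\cdot\rangle+M\zeta}$ is uniformly bounded because the leading negative powers dominate the polynomial $\zeta$. Taking expectations gives $\|\overline{\bu}_t\|_{\sigX[]}\le K_{\bp,T}\,M_{\bp}(\sigX[])$.

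Since $t\mapsto\bu_t^{\emptyword}$ is continuous and nonvanishing on the compact $[0,T]$, we have $c:=\min_t|\bu_t^{\emptyword}|>0$. Choose $r\le1$ with $K_{\bp,T}\,r\le(1-\varepsilon)c$. Then
\[
\|\bell\|_{\sigX[]}\le 1-\varepsilon \quad\text{on } G_{\bp,r},
\]
which is equivalent to $\|\bu_t\|_{\sigX[]}\le(2-\varepsilon)|\bu_t^{\emptyword}|$. The bound from Point~1 then gives $\sup_t\|\bpsi_t\|_{\sigX[]}\le\sup_t|\bpsi_t^{\emptyword}|+\log\varepsilon^{-1}$. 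The finiteness of $\sup_t|\bpsi_t^{\emptyword}|$ comes from Point~3.

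For the expansion itself, apply Point~1 with $\sigX[]=\sigW[t]$ and use \eqref{eq:charchen}. Finally, $\tau>0$ because $M_{\bp}(\sigW[0])=M_{\bp}(\emptyword)=0$ and $t\mapsto\sigW[t]^{\word v}$ is continuous for the finitely many words with $|\word v|\le\deg\bp$; moreover $\tau$ is a hitting time of a closed set by a continuous adapted process, hence a stopping time.

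\textbf{Point 3.} This is immediate from the construction: $\bpsi^{\emptyword}$ was chosen as the continuous logarithm of the continuous nonvanishing function $t\mapsto\bu_t^{\emptyword}$ (Theorem~\ref{theorem:analyticity_general}, point 3). Such a lifting exists because $[0,T]$ is simply connected, and fixing the endpoint value $\bpsi_T^{\emptyword}=\bp^{\emptyword}$ makes it unique.
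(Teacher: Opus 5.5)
Your proof is correct. Points 1 and 3 follow the paper's argument; the paper gets the identity in Point 1 from $\bu_t=\shuexp{\bpsi_t}$ and $\exp\bracket{\bpsi_t}{\mathbb X}=\bracket{\shuexp{\bpsi_t}}{\mathbb X}$ rather than by summing the scalar series for $\log(1+z)$, but that difference is cosmetic.

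The genuine difference is in Point 2. The paper starts from the same bound $\|\overline{\bu}_t\|_{\mathbb X}\le h(t,M_{\bp}(\mathbb X))$, where $h(t,x):=\E[e^{\bracket{\Re(\bp)}{\widehat{\mathbb W}_{t,T}}}(e^{x\zeta_{\bp,t,T}}-1)]$. It shows by dominated convergence, using the uniform bound of Lemma~\ref{lemma:enormous_bound}, that $h$ is jointly continuous on $[0,T]\times[0,R]$ with $h(\cdot,0)=0$. It then extracts $r$ from uniform continuity. This is a soft compactness argument and gives no explicit radius.

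You instead linearize with $e^{x}-1\le xe^{x}$ and reuse the absorption estimate. This gives the Lipschitz-type bound $\|\overline{\bu}_t\|_{\mathbb X}\le K_{\bp,T}\,M_{\bp}(\mathbb X)$ for $M_{\bp}(\mathbb X)\le 1$, uniformly in $t$. It yields the explicit radius $r=\min\{1,(1-\varepsilon)\min_s|\bu_s^{\emptyword}|/K_{\bp,T}\}$ and needs no continuity in $t$ of the integrand.

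One wording fix: absorb the extra factor via $\zeta_{\bp,t,T}\le e^{\zeta_{\bp,t,T}}$. The exponent then becomes $\bracket{\Re(\bp)}{\widehat{\mathbb W}_{t,T}}+(1+M_{\bp}(\mathbb X))\zeta_{\bp,t,T}\le \bracket{\Re(\bp)}{\widehat{\mathbb W}_{t,T}}+2\zeta_{\bp,t,T}$. So the absorption is applied with the constant $2$ in place of $M_{\bp}(\mathbb X)$, giving $K_{\bp,T}=\exp(C_{\bp,T}3^{\deg_{\word 1}(\bp)})$. Applying it with $2M_{\bp}(\mathbb X)$, i.e. absorbing through $\zeta\le M^{-1}e^{M\zeta}$, would cancel the factor $M_{\bp}(\mathbb X)$ you need.
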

\begin{proof}
We note that the third point is immediate from the definition of $\bm \psi^{\emptyword}$ above \eqref{eq:def_log_shuffle_psi}.

We start by proving the first point of the theorem.
   By definition, $\bm u_t = \shuexp{\bm \psi_t}\,$, so that if $\|\bm \psi_t\|_{\sigX[]} < + \infty$ for some $\sigX[] \in G\,$, we immediately have, by Theorem \ref{theorem:analyticity_general}, that 
    $$
    \exp({\bracket{\bm \psi_t}{\sigX[]}}) = \bracket{\bm u_t}{\sigX[]} = \E\left[\exp (\bracket{\bm p}{\sigX[] \otimes \sigW[t,T]}) \right]\,.
    $$
    Taking the seminorm $\|\bm \cdot\|_{\sigX[]}$ in \eqref{eq:def_log_shuffle_psi} and using its shuffle-compatibility (see \eqref{eq:def_shuffle_compatibility_norm_X}) yields
    \begin{align*}
        \| \bm \psi_t\|_{\sigX[]} &\leq |\bm \psi^{\emptyword}_t| + \sum_{n = 1}^{+ \infty}\, \frac{1}{n}\left(\dfrac{\|\overline{\bm u}_t\|_{\sigX[]}}{|\bm u_t^{\emptyword}|}\right)^n \,, \quad t \in [0, T]\,.
    \end{align*}
    Using the fact that $\|\overline{\bm u}_t\|_{\sigX[]} = \| \bm u_t\|_{\sigX[]} - |\bm u_t^{\emptyword}|\,$, it is clear that the sum converges whenever $\|\bm u_t\|_{\sigX[]} < 2\,|\bm u_t^{\emptyword}|\,$, which proves the first point. Moreover, in that case, we obtain
    \begin{equation}
        \label{eq:norm_psi_log}
    \| \bm \psi_t\|_{\sigX[]} \leq |\bm \psi^{\emptyword}_t| - \log\left(1 - \frac{\|\bm u_t\|_{\sigX[]} -  |\bm u_t^{\emptyword}|}{|\bm u_t^{\emptyword}|}\right) = |\bm \psi^{\emptyword}_t| + \log \left(\frac{|\bm u_t^{\emptyword}|}{2|\bm u_t^{\emptyword}| -\|\bm u_t\|_{\sigX[]}} \right)\,. 
    \end{equation}

    We now move to the proof of the second point of the theorem. Namely, showing that the condition $\|\bm u_t\|_{\sigX[]} < 2\,|\bm u_t^{\emptyword}|$ is satisfied whenever $M_{\bm p}(\sigX[])$ is small enough. {This will directly imply \eqref{eq:logcharsigexpansion} when applied to $\mathbb X = \widehat{\mathbb W}_{0,t}$, recall \eqref{eq:charchen}. The positivity of the stopping time \eqref{eq:tau_def_log_fourier} follows by continuity and the fact that  $\widehat{\mathbb W}_0 = \emptyword$ so that $|M_{\bp}(\emptyword)| = 0 < r$.}  
    
    Using the bounds \eqref{eq:enormous1} yields
    \begin{equation}\label{eq:log_laplace_long_bound}
    \begin{aligned}
        \|\bm u_t\|_{\sigX[]} &= |\bm u_t^{\emptyword}| + \|\overline{\bm u}_t\|_{\sigX[]} \\
        &= |\bm u_t^{\emptyword}| + \left\|\E\left[\overline{\bxi}_t\right]\right\|_{\sigX[]} \\
        &\leq |\bm u_t^{\emptyword}| + \E\left[\left\| \overline{\bm \xi}_t\right\|_{\sigX[]}\right] \\
        &= |\bm u_t^{\emptyword}| + \E\left[\left\|\bm \xi_t\right\|_{\sigX[]} - |\bm \xi^{\emptyword}_t|\right] \\
        &\leq |\bm u_t^{\emptyword}| + \E\left[ \exp \left(\bracket{\Re(\bm p)}{\sigW[t,T]} \right)\,\left(\exp \left(M_{\bm p}(\sigX[])\zeta_{\bp, t, T}\right) - 1 \right)\right],
    \end{aligned}    
    \end{equation}
    for all $t \in [0, T]$ and $\sigX[] \in G$.
    Formally letting $M_{\bm p}(\sigX[]) \to 0$ in the latter leaves $|\bm u_t^{\emptyword}|$ on the right-hand side. Hence, we expect that $\|\bm u_t\|_{\sigX[]}$ can be made smaller than $2 |\bm u_t^{\emptyword}|$ by taking $M_{\bm p}(\sigX[])$ small enough.

    To make this precise, define the term on the right-hand side of \eqref{eq:log_laplace_long_bound} by
    $$
    h(t,x) := \E\left[\exp \left( \bracket{\Re(\bm p)}{\sigW[t,T]}\right) \,\left(\exp\left(x\,\zeta_{\bp, t, T}\right) - 1\right) \right], \quad (t,x) \in [0,T] \times \R_+\,. 
    $$
    Fix $R > 0$. By the dominated convergence theorem, $h$ is continuous on $(t,x) \in [0,T] \times [0, R]$ (hence uniformly continuous), since the quantity inside the expectation is continuous and bounded by $\exp(C_{\bm p, T}(1 + R)^{\deg_{\word 1}(\bp)})$ on $[0,T] \times [0, R]$, by virtue of the second inequality in \eqref{eq:enormous1}. Moreover, $h(t,0) = 0$ for all $t \in [0\,,T]\,$. {By continuity of $\bu^{\emptyword}\,$, we have $\min_{s \in [0,T]}|\bu^{\emptyword}_s| > 0\,$. Hence, for any $\varepsilon \in (0\,,1)\,$, {uniform continuity of $h$ on $[0,T] \times [0, R]$} implies the existence of $r > 0$ such that 
    $$
    |h(t, x)| = |h(t, x) - h(t, 0)| \leq (1 - \varepsilon)\min_{s \in [0, T]}|\bu_s^{\emptyword}| \leq (1 - \varepsilon)|\bu_t^{\emptyword}| \,, \quad (t, x) \in [0, T]\times[0, r].
    $$
    Substituting this inequality in \eqref{eq:log_laplace_long_bound} yields
    \begin{equation}\label{eq:eps_cond_for_u}
        \|\bm u_t\|_{\sigX[]} \leq (2- \varepsilon)|\bu_t^{\emptyword}|\,, \quad t \in [0, T], \quad  \sigX[] \in G_{\bp, r}\,.
    \end{equation}
    Combining \eqref{eq:eps_cond_for_u} with \eqref{eq:norm_psi_log}, we obtain

    $$
    \| \bm \psi_t\|_{\sigX[]} \leq |\bm \psi_t^{\emptyword}| + \log \left({\varepsilon}^{-1} \right) \,, \quad t \in [0, T]\,, \quad \sigX[] \in G_{\bp, r}\,,
    $$
    and by continuity of $\bm \psi^{\emptyword}\,$,
    $$
    \sup_{t \in [0, T]}\,\|\bm \psi_t\|_{\sigX[]} \leq \sup_{t \in [0, T]} |\bm \psi_t^{\emptyword}| + \log \left({\varepsilon}^{-1} \right) < + \infty \,, \quad \sigX[] \in G_{\bm p, r}\,.
    $$}
\end{proof}

\begin{sqremark}
    {The convergence radius bound for} $\bracket{\bm \psi_t}{\sigX[]}$ depends on $\sigX[]$ only through $M_{\bm p}(\sigX[]) = \max_{1 \leq |\word v| \leq \deg(\bm p)} \, |\sigX[]^{\word v}|\,$, which involves only finitely many components of $\sigX[]\,$. This allows us to significantly simplify the further analysis: for instance, this immediately guarantees that the stopping time $\tau$ defined by \eqref{eq:tau_def_log_fourier} is almost surely positive.
\end{sqremark}

\begin{sqremark}
Since $\widehat{\mathbb W}_0 = \emptyword$ and $|M_{\bp}(\emptyword)| = 0 < r$,  we get that  \eqref{eq:logcharsigexpansion} holds for   $t = 0$, that is 
 $$ \E\left[\exp \left(\bracket{\bm p}{\sigW[T]}\right) \right] = \exp\left(\bracket{\bpsi_0}{\sigW[0]} \right)= \exp\left(\bpsi_0^{\emptyword} \right). $$
\end{sqremark}

\subsection{Absence of global expansion of the log-Fourier--Laplace transform: a counterexample}\label{sect:global_exp_counterexample}

Theorem~\ref{theorem:log_analyticity_general} establishes that $\bm \psi_t \in \mathcal{A}_{\bm p, r}$ with uniform radius $r > 0\,$. A natural question is whether this local result can be improved to a global one, i.e. whether $\bm \psi_t \in \mathcal A_{\infty}\,$. The answer is negative in general, and this is not a limitation of our approach but a fundamental property of nonlinear PDE solutions, as we highlighted in Subsection~\ref{subsec:power_series_psi}. 

In this subsection, we provide a simple counterexample: we prove that for $p(x) = -x^4$ (corresponding to $\bm p = - 24 \cdot \word{1111} \in \mathcal B$), the function $u(t, \cdot) = \E[\exp(p(\cdot + W_{t,T}))]$ has infinitely many zeros on the imaginary axis, so its logarithm cannot be entire. The proof uses the method of steepest descent to find the precise oscillatory asymptotics of $u(t, ix)$ as $x \to + \infty\,$. Due to the quartic form of the exponent, three complex saddle points are identified, two of which contribute to the asymptotic expansion, producing a cosine factor with explicitly computable argument. The local expansion {given by Theorem~\ref{theorem:log_analyticity_general}} is therefore sharp in the sense that the finite radius condition cannot be removed. 

Before presenting the asymptotic result of Theorem~\ref{theorem:zeros_quartic_exp_main}, we observe that the map 
$$
z \mapsto u(t, z) = \E[\exp(-(z + W_{t, T})^4)],
$$ 
is real-valued on the imaginary axis:
\begin{equation}
    u(t, ix) = \E\left[\exp({-x^4 - W_{t, T}^4 + 6x^2W_{t, T}^2})\,\cos(4x^3W_{t, T} - 4xW_{t, T}^3)\right] \in \R\,.
\end{equation}
The following theorem shows that this real function has an oscillatory behavior as $x \to \infty\,$, thus admitting an infinite number of zeros.
\begin{theorem}
    \label{theorem:zeros_quartic_exp_main}
    Let $X \sim \mathcal N(0, \sigma^2)\,$, with $\sigma > 0\,$. Then,
    $$
    I(x) := \mathbb E \left[\exp(-(ix + X)^4) \right] = \frac{4^{1/3}}{\sqrt{3}\sigma^{1/3}x^{1/3}}\,\exp({\Psi(x)})\,(\cos(\Phi(x)) + o(1))
    $$
    in the limit $x \to + \infty\,$, where 
    $$
    \Psi(x) := \frac{1}{2\sigma^2}x^2-\frac{3}{2^{11/3}\sigma^{8/3}}x^{4/3} - \frac{1}{4^{5/3}\sigma^{10/3}}x^{2/3} + \frac{1}{24 \sigma^4}
    $$
    and
    $$
    \Phi(x) := -\frac{3\sqrt{3}}{2^{11/3}\sigma^{8/3}}x^{4/3} + \frac{\sqrt{3}}{4^{5/3}\sigma^{10/3}}x^{2/3} + \frac{\pi}{6} \,.
    $$
\end{theorem}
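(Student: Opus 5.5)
We write $I(x)$ as an integral against the Gaussian density and apply the method of steepest descent after rescaling. Write
$$
I(x) = \frac{1}{\sqrt{2\pi}\sigma}\int_{\R} \exp\Big(-(ix+y)^4 - \frac{y^2}{2\sigma^2}\Big)\,dy ,
$$
and shift the contour via $w = ix + y$, i.e.\ integrate over the horizontal line $\Im w = x$. This is legitimate because the integrand is entire and decays like $\exp(-\Re w^4)$ along horizontal strips at infinity. It gives
$$
I(x) = \frac{e^{x^2/(2\sigma^2)}}{\sqrt{2\pi}\sigma}\int_{\Im w = x} \exp\Big(-w^4 - \frac{w^2}{2\sigma^2} + \frac{i x w}{\sigma^2}\Big)\,dw .
$$
This already produces the leading factor $e^{x^2/(2\sigma^2)}$ of $\Psi$.

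Next we rescale $w = x^{1/3} s$, so the exponent becomes $x^{4/3} f(s) - \frac{x^{2/3}}{2\sigma^2}s^2$ with $f(s) = -s^4 + \frac{i s}{\sigma^2}$. The term $-s^2 x^{2/3}/(2\sigma^2)$ is of lower order, so the large parameter is $\lambda = x^{4/3}$. The saddle points solve $f'(s) = 0$, i.e.\ $s^3 = \frac{i}{4\sigma^2}$. This gives three roots, $s_k = (4\sigma^2)^{-1/3} e^{i(\pi/6 + 2\pi k/3)}$: namely $s_0 = c\,e^{i\pi/6}$, $s_1 = c\,e^{i5\pi/6}$ and $s_2 = -ic$, with $c = (4\sigma^2)^{-1/3}$. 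At each saddle, $f(s_k) = -s_k^4 + 4 s_k^4 = 3 s_k^4$, by $s_k^3 = i/(4\sigma^2)$ so that $i s_k/\sigma^2 = 4 s_k^4$. For $s_0, s_1$ this equals $3c^4 e^{\pm 2i\pi/3}$, whose real part is $-\tfrac32 c^4$; this matches the $-\frac{3}{2^{11/3}\sigma^{8/3}}x^{4/3}$ term in $\Psi$ and the $\mp\frac{3\sqrt3}{2}c^4$ imaginary part in $\Phi$.

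The key topological step is to deform the contour $\Im s = x^{2/3}$ (equivalently, up to a vertical shift, the real line) into the union of the steepest descent paths through $s_0$ and $s_1$, avoiding $s_2$. One checks from the sectors where $\Re(-s^4) \to -\infty$, namely $|\arg s| < \pi/8$ and $|\arg s - \pi| < \pi/8$, that the deformed path runs from the left valley through $s_1$, then through $s_0$, to the right valley. The path does not pass through $s_2 = -ic$, which lies below and has $\Re f(s_2) = 3c^4 > 0$, so it would otherwise dominate. This contour/topology verification is the step I expect to be the main obstacle: one must check that the steepest descent paths from $s_0$ and $s_1$ connect to the correct valleys rather than to $s_2$. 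We handle it by an explicit level-set argument for $\Im f$, using the symmetry $s \mapsto -\bar s$ that exchanges $s_0$ and $s_1$ and makes the contributions complex conjugates.

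Finally, we perform a Laplace expansion at each of $s_0, s_1$, keeping the subleading exponent $-\frac{x^{2/3}}{2\sigma^2}s^2$. It evaluates to $-\frac{x^{2/3}}{2\sigma^2}c^2 e^{\pm i\pi/3}$, which yields the $x^{2/3}$ terms of $\Psi$ and $\Phi$ after simplifying $c^2/(2\sigma^2) = 2^{-4/3}\sigma^{-10/3}/2$. Its correction shifts the saddle by $O(x^{-2/3})$, producing the constant $\frac{1}{24\sigma^4}$ via the second-order term $\frac{(\text{perturbation}')^2}{2f''}$. The Gaussian prefactor is $\sqrt{2\pi/(\lambda |f''(s_k)|)}$ with $f''(s_k) = -12 s_k^2$, together with the direction angle; the Jacobian $x^{1/3}$ then gives the overall $x^{1/3}\cdot x^{-2/3} = x^{-1/3}$ power and the constant $\frac{4^{1/3}}{\sqrt3\sigma^{1/3}}$. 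Summing the two conjugate contributions gives $2\Re(\cdot) = (\text{modulus})\cdot\cos\Phi(x)$, with the $\pi/6$ phase coming from the steepest-descent direction $e^{-i\arg(f''(s_0))/2}$ and related angles. All remaining terms, including the tails and the higher-order Laplace corrections, are $o(1)$ relative to the modulus, which gives the stated formula.
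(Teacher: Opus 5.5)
Your saddle-point computations agree with the paper's, which works in the variable $t=s/c$: the saddles $s^3=i/(4\sigma^2)$, the values $f(s_k)=3s_k^4$, the $x^{2/3}$ terms, the constant $\frac{1}{24\sigma^4}$ from completing the square with the linear part of $-\frac{x^{2/3}}{2\sigma^2}s^2$, the prefactor, and the phase $\frac{\pi}{6}$. (That phase comes from $(-f''(s_0))^{-1/2}\propto e^{-i\pi/6}$, not from $e^{-i\arg f''(s_0)/2}$.)

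The gap is the step you yourself call the main obstacle. It is only announced, and the geometry you describe for it is wrong. $\Re(-s^4)$ tends to $-\infty$ in four sectors $|\arg s-k\pi/2|<\pi/8$, $k=0,1,2,3$, not two. Moreover $\Im f(s_0)=\frac{3\sqrt3}{2}c^4\neq-\frac{3\sqrt3}{2}c^4=\Im f(s_1)$, and $\Im f\equiv0$ on the imaginary axis. So the steepest-descent paths through $s_1$ and $s_0$ lie on different level sets of $\Im f$, each stays on its own side of the imaginary axis, and they cannot be joined to each other. A contour made of them has to run from the left valley through $s_1$, up into the valley around $\arg s=\pi/2$, and back down through $s_0$ to the right valley. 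Your level-set argument would therefore have to be built on this four-valley picture. Along the branches entering the upper valley you would also need to check that $-\frac{x^{2/3}}{2\sigma^2}\Re(s^2)$, which is positive and unbounded there, stays uniformly dominated by $x^{4/3}(\Re f(s)-\Re f(s_0))$.

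The paper avoids this topology with one observation: $s_0$ and $s_1$ have the same imaginary part $c/2$. A single horizontal shift of the contour therefore puts both saddles on it; the shift is justified because the integrand vanishes on the vertical segments $\Re s=\pm R$ as $R\to\infty$. On the line $s=c(u+\frac{i}{2})$, $u\in\R$, one has exactly
$\Re f(s)-\Re f(s_0)=-c^4(u^2-\frac34)^2$ and $\Re(s^2-s_0^2)=c^2(u^2-\frac34)$.
Hence $\Re f$ is maximal only at $u=\pm\frac{\sqrt3}{2}$, and the subleading term is controlled by the same quantity.

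Split at $u=0$. On the right half write $u=\frac{\sqrt3}{2}+\tau$, so that $u^2-\frac34=\tau(\tau+\sqrt3)$ with $\tau+\sqrt3\geq\frac{\sqrt3}{2}$, and set $\tau=x^{-2/3}v$. The real part of the recentred exponent is then bounded by $-\frac34c^4v^2-\frac{\sqrt3c^2}{2\sigma^2}v$, uniformly in $x$. Dominated convergence yields the limiting Gaussian integral, and the left half is its complex conjugate by the symmetry $s\mapsto-\bar s$. Using this contour instead of a steepest-descent deformation turns your computation into a complete proof.
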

\begin{proof}
    The proof is given in Appendix \ref{app:existence_zeros}.
\end{proof}
In Section \ref{section:recentering}, we will show how to recover a global representation of the log-Fourier--Laplace transform via a recentering method.

\section{Linear and Riccati equations on the extended tensor algebra}\label{sect:heat_and_riccati_sig}
In this section, we show that the coefficients $\bm u$ and $\bm \psi$ constructed in Theorems \ref{theorem:analyticity_general} and \ref{theorem:log_analyticity_general} satisfy differential equations on the tensor algebra. That is, infinite systems of ordinary differential equations indexed by words. To our knowledge, the results below constitute the first existence and uniqueness results for such equations in the fully path-dependent setting.

The section proceeds in four steps. We first show in Section \ref{subsec:existence_heat_eq} that $\bm u$ satisfies a linear equation on the tensor algebra, which we derive through its probabilistic representation. We then consider, in Subsection~\ref{sect:explicit_sol_heat}, the situation in which the solution to such a linear equation can be found in closed form and show that, in general, this explicit representation no longer holds in our setting. Uniqueness within an appropriate class of solutions is then established in Section \ref{subsec:uniqueness_heat}. {We then pass to the logarithm in Section \ref{subsec:existence_Riccati}, analogous to the Cole--Hopf transform, exploiting the linear equation satisfied by $\bm u$ to show that $\bm\psi$ solves a nonlinear Riccati equation on the tensor algebra.} Finally, we address in Section \ref{section:recentering} the intrinsic locality of the expansion of the log-Fourier--Laplace transform, which stems from the structural finiteness of its radius of convergence as established in Section \ref{sect:global_exp_counterexample}. By recentering the expansion at the current running signature using Chen's identity, we recover a global representation of the log-Fourier--Laplace transform, at the cost of solving a Riccati equation with random terminal condition at each time $t$.

\subsection{Linear equation: existence}
\label{subsec:existence_heat_eq}
Formally applying It\^o's formula (see Theorem \ref{thm:ito_formulas}) to the linear functional $\bracket{\bm u_t}{\sigW[t]}$, i.e.~the right-hand side of \eqref{eq:charsigexpansion},  yields
$$
d\bracket{\bm u_t}{\sigW[t]} = \bracket{\dot{\bm u}_t + \mathscr L \bm u_t}{\sigW[t]}\,dt + \bracket{\bm u_t |_{\word 1}}{\sigW[t]}\,dW_t\,,
$$
where the linear operator $\mathscr L$ is given by  
$$
\mathscr L \bm q := \bm q|_{\word 0} + \frac{1}{2}\,\bm q|_{\word{11}}\,, \quad \bm q \in \eTAC[2]\,.
$$
Using the fact that $\bracket{\bm u_t}{\sigW[t]} = \E[\exp (\bracket{\bm p}{\sigW[T]})\,\vert\,\mathcal F_t]$ is a martingale, we formally obtain the infinite system of ODEs 
\begin{equation}\label{eq:heat_eq_coef_sig}
\begin{aligned}
    \dot{\bm u}_t + \mathscr{L} \bm u_t = 0\,, \quad t \in [0, T]\,,
\end{aligned}
\end{equation}
with $\bm u_T = \shuexp{\bm p}$. As discussed in Section \ref{sect:expansion_u_power_ser}, this is the natural non-Markovian analogue of expanding the solution of the classical heat equation into a power series and deriving the dynamics of the corresponding coefficients.


We stress that proving existence of a solution to this differential equation directly, by fixed point or truncation arguments, seems to be very intricate in the infinite-dimensional setting. Moreover, it admits an infinite number of solutions, and only one corresponds to the probabilistic representation we are interested in. Our approach circumvents this by constructing $\bm u_t$ as $\E[\bm \xi_t]$ and then verifying that it satisfies the equation in Theorem \ref{theorem:existence_heat_eq} below. The class of terminal conditions $\mathcal B$ and the estimates from Lemma \ref{lemma:enormous_bound} are what make this indirect approach work. 

We will use the following estimates in order to control $\|\mathscr L \bm u_t\|_{\sigX[]}\,$.

\begin{lemma}
    \label{lem:enormous_bound_shift}
    Let $\bm p \in \mathcal B\,$, and $\bxi$ be as in \eqref{eq:def_xi}. Then, for any $\word v \in V\,$, there exists a constant $C_{\bp, \word v} \geq 0$ such that
        $$
        \|\bxi_t|_{\word v}\|_{\sigX[]} \leq C_{\bp, \word v} (1 + M_{\bp}(\sigW[t,T]))^{|\word v|}(1 + M_{\bp}(\sigX[]))^{|\wv|}\,\exp\left(C_{\bm p, T} \,(1 + M_{\bm p}(\sigX[]))^{\deg_{\word 1}(\bm p)}\right), \quad t \in [0, T], \quad \sigX[] \in G,
        $$
        where $M_{\bp}$ is defined in \eqref{eq:def_M_p} and $C_{\bp , T}$ is as in Lemma \ref{lemma:enormous_bound}.
\end{lemma}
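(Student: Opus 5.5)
We argue by induction on the length of $\word v$, using that the right shift by a single letter is a derivation with respect to the shuffle product. For any letter $\word i$ and $\bell, \bm m \in \eTAC[2]$ one has $(\bell \shuprod \bm m)|_{\word i} = (\bell|_{\word i})\shuprod \bm m + \bell \shuprod (\bm m|_{\word i})$. This follows from the recursive definition of the shuffle product: the last letter of a shuffle comes from one of the two factors. Consequently $\shuexp{\bell}|_{\word i} = (\bell|_{\word i}) \shuprod \shuexp{\bell}$. Applied to $\bxi_t = \shuexp{\bp|_{\sigW[t,T]}}$, and noting that $(\bp|_{\sigW[t,T]})|_{\word i} = \bp|_{\word i \otimes \sigW[t,T]}$ (right shifts compose as $\bell|_{\word v}|_{\word w} = \bell|_{\word{wv}}$), we obtain $\bxi_t|_{\word i} = \bm a^{\word i}_t \shuprod \bxi_t$ with $\bm a^{\word i}_t := (\bp|_{\sigW[t,T]})|_{\word i}$, a finite-degree element. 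Iterating, $\bxi_t|_{\word v}$ is a finite sum of terms of the form $\bm a^{(1)}_t \shuprod \cdots \shuprod \bm a^{(k)}_t \shuprod \bxi_t$ with $k \leq |\word v|$. Each $\bm a^{(j)}_t$ is a right shift of $\bp|_{\sigW[t,T]}$ by a nonempty word $\word w$ that is a factor of $\word v$, and the number of terms depends only on $\word v$.

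Next we use the shuffle-compatibility \eqref{eq:def_shuffle_compatibility_norm_X}, which gives
\[
\|\bxi_t|_{\word v}\|_{\sigX[]} \leq \sum \prod_{j}\|\bm a^{(j)}_t\|_{\sigX[]}\,\|\bxi_t\|_{\sigX[]}.
\]
The last factor is bounded by $\exp(C_{\bp,T}(1+M_{\bp}(\sigX[]))^{\deg_{\word 1}(\bp)})$ by the second inequality of Lemma~\ref{lemma:enormous_bound}. It remains to bound each finite-degree factor. We have $(\bp|_{\sigW[t,T]})|_{\word w} = \sum_{\word u}\sigW[t,T]^{\word u}\,\bp|_{\word{wu}}$, and only words with $|\word{wu}| \leq \deg(\bp)$ contribute. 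Hence
\[
\|(\bp|_{\sigW[t,T]})|_{\word w}\|_{\sigX[]} \leq \sum_{\word u,\word z} |\bp^{\word{zwu}}|\,|\sigW[t,T]^{\word u}|\,|\sigX[]^{\word z}|,
\]
where the empty words contribute the value $1$. Since $|\word u|, |\word z| \leq \deg(\bp)$, every signature entry appearing is bounded by $1+M_{\bp}(\cdot)$. This yields a bound $C(1+M_{\bp}(\sigW[t,T]))(1+M_{\bp}(\sigX[]))$ for each factor. With $k \leq |\word v|$ factors, and since $1+M \geq 1$ so that lower powers are dominated by the $|\word v|$-th power, we obtain the claimed estimate with a constant $C_{\bp,\word v}$ collecting the finitely many coefficients and the combinatorial multiplicities.

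The main point requiring care is the derivation identity for the shift. We need to check that it remains valid when $\bxi_t$ has infinite degree, which it does because the identity is coordinatewise and each coordinate is a finite sum. We also need to verify that the norms of the product terms are finite, so that shuffle-compatibility applies. This follows inductively from $\bxi_t \in \mathcal A_{\sigX[]}$ (Lemma~\ref{lemma:enormous_bound}) and the finite degree of the $\bm a^{(j)}_t$. Finally, the dependence on $W$ has been kept entirely in $M_{\bp}(\sigW[t,T])$ rather than in the sharper $\zeta$-type bound. This is intentional: taking expectations later only requires finite moments of $M_{\bp}(\sigW[t,T])$, which hold by Gaussian integrability of Brownian signature entries.
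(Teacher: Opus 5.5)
Your proposal is correct and follows essentially the same route as the paper. The shared ingredients are: the single-letter right shifts act as derivations for $\shuprod$; $\bxi_t|_{\word v}$ expands as $\bxi_t$ shuffled with a shuffle polynomial in the shifts $(\bp|_{\sigW[t,T]})|_{\word w}$; the seminorm $\|\cdot\|_{\sigX[]}$ is shuffle-compatible; each such shift is bounded by $C(1+M_{\bp}(\sigW[t,T]))(1+M_{\bp}(\sigX[]))$; and the second inequality of Lemma~\ref{lemma:enormous_bound} controls $\|\bxi_t\|_{\sigX[]}$.

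The only slip is cosmetic. The words $\word w$ that appear are subsequences of $\word v$, not necessarily contiguous factors (for instance, $\word v = \word{100}$ produces the factor $(\bp|_{\sigW[t,T]})|_{\word{10}}$ via the last two shifts, but also terms using the first and last letters). This is harmless, because your bound on each factor is uniform in $\word w$.
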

\begin{proof}
As discussed in Section \ref{subsec:ito}, shifts act as differentiation operators. {More precisely, for a given letter $\word i \in \{\word 0, \word 1\}$, the shift $\proj{i}$ is a derivation:}
$$
(\bell \shuprod\bell ')|_{\word i} = \bell|_{\word i} \shuprod \bell' + \bell \shuprod \bell'|_{\word i}\,, \quad \bell\,, \bell' \in \eTAC[2]\,.
$$
For a general word $\word v \in V\,$, applying this recursively to the shuffle exponential $\shuexp{\bell} = \sum_{n \geq 0} \bell\shupow{n}/n!$ yields
    $$
    \shuexp{\bell}|_{\word v} = \shuexp{\bell}\shuprod { P\shupow{}_{\word v}(\bell|_{\word{w}},\ {\word w \subset \word v})\,,}
    $$
    where $P\shupow{}_{\word v}$ is a shuffle polynomial {with nonnegative coefficients} of degree $|\word v|$ that depends on all $\bell|_{\word w}$ {such that $\word{w}$ is a subsequence of letters of $\word v\,$, denoted here by $\word{w} \subset \word{v}$}. By the shuffle-compatibility \eqref{eq:def_shuffle_compatibility_norm_X} of the seminorm $\|\cdot\|_{\sigX[]}\,$,
    $$
    \|\shuexp{\bell}|_{\word v}\|_{\sigX[]} \leq \|\shuexp{\bell}\|_{\sigX[]}\,P_{\word v}(\|\bell|_{\word w}\|_{\sigX[]}, \ {\word w \subset \word v})\,, \quad \sigX[] \in G\,,
    $$
    {where $P_{\word v}$ is a polynomial obtained from $P\shupow{}_{\word v}$ by replacing the shuffle product with the ordinary product of real numbers.}
    Applying this to $\bxi_t = \shuexp{\bm p|_{\sigW[t,T]}}\,$, we get
    \begin{equation}
    \label{eq:bound_norm_xi_proj_proof}
    \|\bxi_t|_{\wv}\|_{\sigX[]} \leq \|\bxi_t\|_{\sigX[]}\,P_{\word v}(\|(\bp|_{\sigW[t,T]})|_{\word w}\|_{\sigX[]}, \ {\word w \subset \word v})\,, \quad t \in [0, T]\,, \quad \sigX[] \in G\,.
    \end{equation}
    It remains to bound 
    \begin{align*}
        \|(\bp|_{\sigW[t,T]})|_{\word w}\|_{\sigX[]} &= \sum_{n \geq 0}\left|\sum_{|\word u| = n} (\bp|_{\sigW[t,T]})^{\word u\word w}\sigX[]^{\word u} \right| \\
        &\leq \sum_{|\word u| = 0}^{\deg(\bp)} \sum_{|\word{u'}| = 0}^{\deg(\bp)} \left|\bp^{\word{uwu'}} \sigW[t,T]^{\word{u'}} \sigX[]^{\word u} \right| \\
        &\leq (1 + M_{\bm p}(\sigX[]))(1 + M_{\bm p}(\sigW[t,T]))(1 +\deg(\bp))^2 \max_{\word u}|\bp^{\word u}|,
    \end{align*}
    for all $t \in [0, T],\ \sigX[] \in G$, and $\word w \in V$.
    Plugging this into \eqref{eq:bound_norm_xi_proj_proof}, we can find a constant $C_{\bm p, \word v} \geq 0$ such that 
    $$
    \|\bxi_t|_{\word v}\|_{\sigX[]} \leq C_{\bm p, \wv}\|\bxi_t\|_{\sigX[]} (1 + M_{\bp }(\sigX[]))^{|\word v|}(1 + M_{\bp}(\sigW[t,T]))^{|\word v|}\,, \quad t \in [0, T], \quad \sigX[] \in G\,.
    $$
    Finally, $\|\bxi_t\|_{\sigX[]}$ can be bounded using \eqref{eq:enormous1}, yielding the result.
\end{proof}
We are now ready to prove that $\bm u$ satisfies the linear differential equation on the tensor algebra.
\begin{theorem}
\label{theorem:existence_heat_eq}
Let $\bm p \in \mathcal B\,$, and $\bm u$ be as in Theorem \ref{theorem:analyticity_general}. Then,
\begin{enumerate}
    \item For all $\word v \in V\,$, the function $t \mapsto \bm u_t^{\word v}$ is $C^1$ on $[0, T]\,$. 
    \item The following linear equation is satisfied:
    \begin{equation}
    \label{eq:heat_eq_from_thm}
    \begin{cases}
         \dot \bu_t + \mathscr{L}\bu_t= 0\,, \quad t \in [0, T], \\
    \bm u_T = \shuexp{\bm p}.
    \end{cases}
    \end{equation}
    \item For any $\word v \in V\,$, there exists a constant $C_{\bm p, T, \word v} \geq 0$ such that {for all $\sigX[] \in G$}
    \begin{equation}
    \label{eq:bound_norm_shit_u}
    \sup_{t \in [0, T]}\,\|\bm u_t|_{\word v}\|_{\sigX[]} \leq C_{\bm p, T, \word v}\,(1 + M_{\bm p}(\sigX[]))^{|\word v|}\,  \exp \left(C_{\bm p, T}\, (1 + M_{\bm p}(\sigX[]))^{\deg_{\word 1}(\bm p)} \right) < + \infty\,.
    \end{equation}
\end{enumerate}
\end{theorem}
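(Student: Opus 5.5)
Since $\bm u_t = \E[\bxi_t]$, the plan is to get everything from $\bxi_t$ and the martingale property. Point 3 comes first and is immediate. By linearity of expectation, $\bm u_t|_{\word v} = \E[\bxi_t|_{\word v}]$ componentwise, so
\[
\|\bm u_t|_{\word v}\|_{\sigX[]} \leq \E\bigl[\|\bxi_t|_{\word v}\|_{\sigX[]}\bigr].
\]
Lemma \ref{lem:enormous_bound_shift} bounds the integrand by $C_{\bp,\word v}(1+M_{\bp}(\sigW[t,T]))^{|\word v|}(1+M_{\bp}(\sigX[]))^{|\word v|}\exp(C_{\bp,T}(1+M_{\bp}(\sigX[]))^{\deg_{\word 1}(\bp)})$. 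It then suffices that $\sup_{t}\E[(1+M_{\bp}(\sigW[t,T]))^{|\word v|}]<\infty$. This follows from Lemma \ref{lem:sig_bound}, since signature coordinates of $\widehat W$ on $[t,T]$ are dominated by polynomials in $\sup_{s\in[t,T]}|W_s-W_t|$, which has moments of all orders uniformly in $t$.

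For points 1 and 2 I would avoid differentiating $\E[\bxi_t]$ directly and use Itô's formula with a fixed deterministic group-like argument. Fix $\word v$ and the word signature $\sigX[]=\word v$-dual test. More concretely, fix $\sigX[]\in G$ and $0\le s\le t\le T$, and use Chen and independence:
\[
\bracket{\bm u_s}{\sigX[]}=\E\bigl[\bracket{\bm p}{\sigX[]\otimes\sigW[s,T]}\bigr]\text{-transform}=\E\bigl[\bracket{\bm u_t}{\sigX[]\otimes\sigW[s,t]}\bigr].
\]
This is the tower property applied to $u(s,\cdot)$, using \eqref{eq:charsigX} with $\sigX[]$ replaced by $\sigX[]\otimes\sigW[s,t]\in G$.

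Now apply Theorem \ref{thm:ito_formulas} to the time-independent coefficient $\bm u_t$ along the process $r\mapsto \sigX[]\otimes\sigW[s,r]$. This is the signature of the concatenated path, so the right Itô formula applies with shifts. It gives
\[
\bracket{\bm u_t}{\sigX[]\otimes\sigW[s,t]}=\bracket{\bm u_t}{\sigX[]}+\int_s^t\bracket{\mathscr L\bm u_t}{\sigX[]\otimes\sigW[s,r]}dr+\text{martingale}.
\]
The integrability conditions of $\mathcal I^r$ and the true-martingale property follow from point 3, with $M_{\bp}(\sigX[]\otimes\sigW[s,r])$ having all moments. Taking expectations gives
\[
\bracket{\bm u_s-\bm u_t}{\sigX[]}=\int_s^t\E\bracket{\mathscr L\bm u_t}{\sigX[]\otimes\sigW[s,r]}dr .
\]
Next, take $\sigX[]=\sigX[]^\varepsilon$ ranging over dilations. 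By Lemma \ref{lem:uniq_coef}-type identification (expand in $\varepsilon$, levelwise), and using $\E[\sigW[s,r]^{\word w}]$ explicit, this yields the componentwise identity
\[
\bm u_s^{\word w}-\bm u_t^{\word w}=\int_s^t\bigl(e^{(r-s)\mathscr L}\text{-type finite expansion of }\mathscr L\bm u_t\bigr)^{\word w}dr .
\]
An equivalent and cleaner route applies the above with $\sigX[]$ replaced by piecewise-linear signatures and differentiates in the parameters as in the proof of Lemma \ref{lem:uniq_coef}.

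Rather than carry the $t$-frozen form, I would set $s=t-h$ and divide by $h$. Continuity of $r\mapsto\E\bracket{\mathscr L\bm u_t}{\sigX[]\otimes\sigW[t-h,r]}$ (dominated convergence from point 3), together with continuity of $t\mapsto \bm u_t$ in each coordinate, gives the left derivative $-\bracket{\mathscr L\bm u_t}{\sigX[]}$; the right derivative is handled analogously. Coordinatewise continuity follows as in Theorem \ref{theorem:analyticity_general}(3): $\bm u_t^{\word v}=\E[\bxi_t^{\word v}]$, with $\bxi_t^{\word v}$ continuous in $t$ and dominated via Lemma \ref{lem:enormous_bound_shift} with $\sigX[]=\emptyword$. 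Hence $\tfrac{d}{dt}\bracket{\bm u_t}{\sigX[]}=-\bracket{\mathscr L\bm u_t}{\sigX[]}$ for all $\sigX[]\in G$.

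To extract coordinates, note that $\bracket{\bm u_t}{\sigX[]}=\bracket{\bm u_t|_{\sigX[]'}}{\cdots}$; more simply, $\bm u_t^{\word v}=\bracket{\bm u_t|_{\word v}}{\emptyword}$, and $\bm u|_{\word v}$ satisfies the same relations because the shifts commute with $\mathscr L$. Concretely, I would run the argument directly on $\bracket{\bm u_t}{\sigX[]\otimes\word v}$ via Proposition \ref{prop:projections_ppties}(2), or apply the $\partial_{t_1}\cdots\partial_{t_n}$ trick of Lemma \ref{lem:uniq_coef}, justified by the uniform bounds. This gives $\dot{\bm u}^{\word v}_t=-(\mathscr L\bm u_t)^{\word v}$, which is continuous, so point 1 follows. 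Finally, $\bm u_T=\E[\shuexp{\bm p|_{\emptyword}}]=\shuexp{\bm p}$, since $\sigW[T,T]=\emptyword$.

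The main obstacle is the passage from the identity tested against group-like elements to coordinatewise $C^1$ regularity. This requires exchanging derivatives, limits and infinite pairings, and the uniform polynomial-times-exponential bound \eqref{eq:bound_norm_shit_u} is exactly what must be fed into dominated convergence at each step.
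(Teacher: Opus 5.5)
Your treatment of point 3 is the same as the paper's. For points 1--2, however, there is a genuine gap at every step where you take an expectation over a random argument of an infinite pairing. This affects three steps: the true-martingale property of $\int \bracket{\bm u_t|_{\word 1}}{\sigX[]\otimes\sigW[s,r]}\,dW_r$, taking the expectation of the drift $\int_s^t \bracket{\mathscr L\bm u_t}{\sigX[]\otimes\sigW[s,r]}\,dr$, and the dominated convergence in the difference quotient. You justify all three by \eqref{eq:bound_norm_shit_u} plus moments of $M_{\bp}(\sigX[]\otimes\sigW[s,r])$. But \eqref{eq:bound_norm_shit_u} carries the factor $\exp\bigl(C_{\bp,T}(1+M_{\bp}(\mathbb Y))^{\deg_{\word 1}(\bp)}\bigr)$, and $M_{\bp}(\mathbb Y)\geq |\mathbb Y^{\word 1}|^{\deg(\bp)}/\deg(\bp)!$ with $\mathbb Y^{\word 1}=\sigX[]^{\word 1}+W_r-W_s$. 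Since $\deg(\bp)\deg_{\word 1}(\bp)\geq 4$ for non-constant $\bp\in\mathcal B$, this factor behaves at least like $\exp(c|W_r-W_s|^{4})$ and has infinite expectation. Moments of $M_{\bp}$ cannot control an exponential of a power of it. This is not an artefact of a crude estimate: for $\bp=-24\cdot\word{1111}$ one has $\|\bm u_T\|_{\mathbb Y}=\|\shuexp{\bp}\|_{\mathbb Y}=\exp\bigl((\mathbb Y^{\word 1})^4\bigr)$ for every $\mathbb Y\in G$. So already the left derivative at $t=T$ cannot be justified through seminorms; the integrability has to come from cancellations inside the pairing, not from absolute summation.

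The missing ingredient is a bound on the pairings themselves, using the upper bound on $\Re\bracket{\bp}{\cdot}$ built into $\mathcal B$. For a fixed $\mathbb Y\in G$, Fubini is legitimate because the bound of Lemma \ref{lem:enormous_bound_shift} is then integrable. So with $W'$ an independent Brownian motion, $\bracket{\bm u_t|_{\word w}}{\mathbb Y}=\mathbb E'\bigl[\exp(\bracket{\bp}{\mathbb Y\otimes\widehat{\mathbb W}'_{t,T}})\,\bracket{\bm P_{\word w}}{\mathbb Y}\bigr]$, where $\bm P_{\word w}$ is the shuffle polynomial in the shifts of $\bp|_{\widehat{\mathbb W}'_{t,T}}$ from the proof of Lemma \ref{lem:enormous_bound_shift}. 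Now take $\mathbb Y=\sigX[]\otimes\sigW[s,r]$ and write $\bracket{\bp}{\mathbb Y\otimes\widehat{\mathbb W}'_{t,T}}=\bracket{{}_{\sigX[]}|\bp}{\sigW[s,r]\otimes\widehat{\mathbb W}'_{t,T}}$. Lemma \ref{lemma:recentering_property_class_B} and Proposition \ref{prop:boundedness_dot_product_B}, applied to the concatenated path, bound the exponential by a constant depending only on $\sigX[]$. What remains is polynomial growth in $M_{\bp}(\mathbb Y)$ and $M_{\bp}(\widehat{\mathbb W}'_{t,T})$; this is the same mechanism as in the proof of Corollary \ref{cor:uniqueness_heat}. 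With this estimate in place of \eqref{eq:bound_norm_shit_u}, your route (tower property, then Itô with $\bm u_t$ frozen, then identification) can be completed.

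The paper avoids the problem more economically. It never pairs the infinite coefficient $\bm u_t$ with a random argument. Instead it applies Itô's formula to the finite-degree functional $\exp(\bracket{\bp}{\sigX[]\otimes\widecheck{\mathbb W}_s^{-1}})$ along the time-reversed path, which has the law of $\sigW[T-s,T]$, so boundedness from $\mathcal B$ kills the martingale term at once. It then uses $\mathscr L\bxi_s=\bxi_s\shuprod\mathscr R(\bp|_{\sigW[s,T]})$, applies Fubini only at deterministic $\sigX[]$, and applies Lemma \ref{lem:uniq_coef} to the integral identity $\bm u_t=\shuexp{\bp}+\int_t^T\mathscr L\bm u_s\,ds$. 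This gives $C^1$ regularity directly, with no difference quotients.

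Two smaller points. First, drop the ``$e^{(r-s)\mathscr L}$-type'' expansion obtained by inserting $\mathbb E[\sigW[s,r]]$ into the infinite pairing; that is exactly the interchange of expectation and bracket shown to fail in Section \ref{sect:explicit_sol_heat}. Second, $\bm u|_{\word v}$ does not automatically inherit your derivative identity, since $\sigX[]\otimes\word v\notin G$. Integrate the identity in $t$ and apply Lemma \ref{lem:uniq_coef} instead.
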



\begin{proof}
The bound \eqref{eq:bound_norm_shit_u} is easily obtained from Lemma \ref{lem:enormous_bound_shift}: for any word $\word v \in V\,$, recalling that $\bm u = \E[\bxi]\,$, we have 
\begin{align*}
    \sup_{t \in [0, T]}\|\bu_t|_{\word v}\|_{\sigX[]} &\leq \sup_{t \in [0, T]} \E[\|\bxi_t|_{\word v}\|_{\sigX[]}] \\
    &\leq C_{\bm p, \wv} (1 + M_{\bp}(\sigX[]))^{|\wv|} \exp\left(C_{\bp, T} (1 + M_{\bp}(\sigX[]))^{\deg(\bp)}\right) \, \sup_{t \in [0, T]}\E\left[(1 + M_{\bp}(\sigW[t,T]))^{|\wv|} \right] \\
    &\leq C_{\bp, T, \wv}(1 + M_{\bp}(\sigX[]))^{|\wv|} \exp\left(C_{\bp, T} (1 + M_{\bp}(\sigX[]))^{\deg(\bp)}\right)\,, \quad \sigX[] \in G\,,
\end{align*}
where we used the fact that $\sup_{t \in [0, T]}\E\left[(1 + M_{\bp}(\sigW[t,T]))^{r} \right]$ is finite for any $r \geq 1\,$.

We now move to proving the first two points of the theorem. Set $\widecheck{W}_t = -\widehat{W}_t$, and let $\widecheck{\mathbb{W}}$ denote its signature. The process $\widecheck{W}$ has the same law as the time-reversal of $\widehat{W}$, since time-reversed Brownian increments are equal in law to negated increments. Therefore, using the fact that the signature of a time-reversed path is the inverse of the original signature, we obtain
\begin{equation}\label{eq:hat_upside_down}
    \widecheck{\mathbb{W}}_t^{-1} \overset{d}{=} \widehat{\mathbb{W}}_t\,, \quad t \geq 0\,.
\end{equation}
Let us fix $t \in [0\,, T]$ and $\sigX[] \in G\,$. Applying Itô's formula (Theorem~\ref{thm:ito_formulas}) to $s\mapsto\bracket{~_{\sigX[]}|\bp}{\widecheck{\mathbb{W}}_s^{-1}}$ between $0$ and $T-t$ yields
$$
\bracket{~_{\sigX[]}|\bp}{\widecheck{\mathbb{W}}_{T-t}^{-1}} = \bracket{~_{\sigX[]}|\bp}{\emptyword} + \int_0^{T-t} \bracket{~_{\word{0}}|(~_{\sigX[]}|\bp) + \dfrac12~_{\word{11}}|(~_{\sigX[]}|\bp)}{\widecheck{\mathbb{W}}_{s}^{-1}}\,ds + \int_0^{T-t} \bracket{~_{\word{1}}|(~_{\sigX[]}|\bp)}{\widecheck{\mathbb{W}}_{s}^{-1}}\,dW_s.
$$
Using the shift properties given by Proposition~\ref{prop:projections_ppties}, this can be rewritten as
$$
\bracket{\bp}{\mathbb{X}\otimes\widecheck{\mathbb{W}}_{T-t}^{-1}} = \bracket{\bp}{\mathbb{X}} + \int_0^{T-t}\bracket{\bp}{\mathbb{X}\otimes\left(\word{0}+\dfrac12\word{11}\right)\otimes\widecheck{\mathbb{W}}_{s}^{-1}}\,ds + \int_0^{T-t}\bracket{\bp}{\mathbb{X}\otimes\word{1}\otimes\widecheck{\mathbb{W}}_{s}^{-1}}\,dW_s.
$$
Applying Itô's formula  to $\exp\left(\bracket{\bp}{\mathbb{X}\otimes\widecheck{\mathbb{W}}_{T-t}^{-1}}\right)$, we obtain
\begin{align}
    \exp({\bracket{\bp}{\sigX[] \otimes \widecheck{\mathbb{W}}_{T-t}^{-1}}}) &= \exp({\bracket{\bp}{\sigX[]}}) \\
    &+ \int_0^{T-t} \exp({\bracket{\bp}{\sigX[] \otimes \widecheck{\mathbb{W}}_{s}^{-1}}})\left(\bracket{\bp}{\sigX[] \otimes \left(\word{0} + \dfrac{1}{2}\word{11}\right)\otimes\widecheck{\mathbb{W}}_{s}^{-1}} + \dfrac{1}{2}\bracket{\bp}{\sigX[] \otimes \word{1}\otimes\widecheck{\mathbb{W}}_{s}^{-1}}^2\right)\,ds \\
    &+ \int_0^{T-t} \exp({\bracket{\bp}{\sigX[] \otimes \widecheck{\mathbb{W}}_{s}^{-1}}})\bracket{\bp}{\sigX[] \otimes \word{1}\otimes\widecheck{\mathbb{W}}_{s}^{-1}}\,dW_s.
\end{align}
{The stochastic integral is a true martingale since
$
\exp({\bracket{\bp}{\sigX[] \otimes \widecheck{\mathbb{W}}_{s}^{-1}}}) = \exp({\bracket{~_{\sigX[] }|\bp}{ \widecheck{\mathbb{W}}_{s}^{-1}}}),
$
is bounded by Proposition~\ref{prop:boundedness_dot_product_B} (recall that $~_{\sigX[] }|\bp \in \mathcal{B}$ by Lemma~\ref{lemma:recentering_property_class_B}), while $\bracket{\bp}{\sigX[] \otimes \word{1}\otimes\widecheck{\mathbb{W}}_{s}^{-1}}$ is a finite linear combination of the elements of $\widecheck{\mathbb{W}}_{s}^{-1}$.} Thus, taking expectations kills the martingale term, so that using \eqref{eq:hat_upside_down}, the change of variables $u = T - s$, and the fact that $\sigW[T-u] \overset{d}{=} \sigW[u, T]$, we get
\begin{align}
     \E&
\left[\exp({\bracket{\bp}{\sigX[] \otimes \widehat{\mathbb{W}}_{t, T}}})\right] = \exp({\bracket{\bp}{\sigX[]}}) \\
     &+ \int_t^{T} \E\left[\exp({\bracket{\bp}{\sigX[] \otimes \widehat{\mathbb{W}}_{u, T}}})\left(\bracket{\bp}{\sigX[] \otimes \left(\word{0} + \dfrac{1}{2}\word{11}\right)\otimes\widehat{\mathbb{W}}_{u, T}} + \dfrac12\bracket{\bp}{\sigX[] \otimes \word{1}\otimes\widehat{\mathbb{W}}_{u, T}}^2\right)\right]\,du.
\end{align}
The left-hand side is equal to $\bracket{\bu_t}{\sigX[]}$ by Theorem \ref{theorem:analyticity_general}, whereas the right-hand side can be rewritten using Proposition~\ref{prop:projections_ppties} as follows
\begin{equation}
\label{eq:heat_eq_before_exchange}
\bracket{\bu_t}{\sigX[]} = \exp({\bracket{\bp}{\sigX[]}}) +\int_t^T \E \left[\exp({\bracket{\bp|_{\sigW[s, T]}}{\sigX[]}})\bracket{\mathscr{R}(\bp|_{\sigW[s, T]})}{\sigX[]} \right]\,ds\,, \quad t \in [0, T]\,, \quad \sigX[] \in G\,,
\end{equation}
where the nonlinear operator $\mathscr R$ is defined as $\mathscr R \bell := \mathscr L \bell + \frac{1}{2}(\bell|_{\word 1})\shupow{2}\,$.
Moreover, since $\shuexp{\bell}|_{\word i} = \shuexp{\bell} \shuprod \bell|_{\word i}$ for $\word i \in \{\word 0, \word 1\}\,$, we get that $\mathscr L \bxi_t = \mathscr L \shuexp{\bp|_{\sigW[t,T]}} = \shuexp{\bp|_{\sigW[t,T]}} \shuprod \mathscr R (\bp|_{\sigW[t,T]})\,$, and \eqref{eq:heat_eq_before_exchange} becomes
\begin{equation}\label{eq:almost_heat_eq}
  \bracket{\bu_t}{\sigX[]} = \exp(\bracket{\bp}{ \sigX[]}) + \int_t^T \E \left[\bracket{\mathscr L \bxi_s}{\sigX[]} \right] ds = \bracket{\shuexp{\bp}}{ \sigX[]} + \int_t^T \bracket{\mathscr L \bu_s}{\sigX[]} ds\,, \quad t \in [0, T]\,, \quad \sigX[] \in G\,,
\end{equation}
where the last equality follows from $\E[\|\mathscr L \bxi_t\|_{\sigX[]}]$ being finite, thanks to Lemma \ref{lem:enormous_bound_shift} and to the fact that $\E[( 1 + M_{\bp}(\sigW[t,T]))^r]$ is finite for any $r \geq 1\,$. Finally, \eqref{eq:bound_norm_shit_u} shows that $\sup_{t \in [0, T]}\|\mathscr L \bu_t\|_{\sigX[]} < + \infty\,$, which allows us to perform the interchange $\int_t^T \bracket{\mathscr L \bm u_s}{\sigX[]}\, ds = \bracket{\int_t^T  \mathscr L \bm u_s\, ds}{\sigX[]}$ in \eqref{eq:almost_heat_eq}. Combining this with Lemma \ref{lem:uniq_coef}, we obtain
$$
\bm u_t = \shuexp{\bm p} + \int_t^T \mathscr L \bu_s ds\,, \quad t \in [0, T]\,,
$$
thus proving the first two points of Theorem \ref{theorem:existence_heat_eq}.
\end{proof}

\begin{sqremark}\label{rmk:markovian_heat}
    The Markovian framework considered in Section~\ref{section:heuristices} and
    in \cite*{abijaber2024fourier}, where
    $$
    P((W_s)_{s \in [0, T]}) = q(W_T) + \int_0^T r(W_s)\,ds,
    $$
    and $q$ and $r$ are two polynomials of even degree with the leading coefficients
    having negative real parts, can be recovered when taking
    $$
    \bp = \bq + \br\word{0}, \quad \deg_{\word{0}}(\bq) = \deg_{\word{0}}(\br) = 0.
    $$
    In this case, observing that
    $$
    \E\left[\exp\left(q(W_T) + \int_t^Tr(W_s)\,ds\right)\,\Bigg|\, \F_t\right] = e^{-\int_0^tr(W_s)\,ds}\bracket{\bu_t}{\sigW} = \bracket{\bu_t \shuprod \shuexp{-\br\word{0}}}{\sigW} =: \bracket{\bm v_t}{\sigW},
    $$
    implies that the solution
    to~\eqref{eq:heat_eq_from_thm} has the form
    $\bu_t = \bm{v}_t \shuprod \shuexp{\br\word{0}}$,
    where $\bm{v}_t$ satisfies
    \begin{equation*}
    \begin{cases}
        \dot{\bm{v}}_t + \mathscr{L}\bm{v}_t + \bm{v}_t \shuprod \br = 0,
        \quad t \in [0, T], \\
        \bm{v}_T = \shuexp{\bq}.
    \end{cases}
    \end{equation*}
    Notice that no term in the equation above contains the letter $\word{0}$. Hence, we
    have $\mathscr{L}\bm{v}_t = \frac{1}{2} \bm{v}_t\proj{11}$, and the solution is
    given simply by
    $$
    \bm{v}_t = \sum_{n \geq 0}\bm{v}_t^n \word{1}^{\otimes n}, \quad
    \bracket{\bm{v}_t}{\sigW} = \sum_{n \geq 0}\bm{v}_t^n\frac{W_t^n}{n!}.
    $$
    Moreover, in the presence of only the letter $\word{1}$, the shuffle product reduces
    to the Cauchy product defined by~\eqref{eq:cauchy_product_def}, so that the linear
    equation takes the form
    \begin{equation*}
    \begin{cases}
        \dot{\bm{v}}_t + \frac{1}{2}\bm{v}_t\proj{11} + \bm{v}_t * \br = 0,
        \quad t \in [0, T], \\
        \bm{v}_T = \exp^*\left(\bq\right).
    \end{cases}
    \end{equation*}
    In particular, when $\br = 0$, we recover equation~\eqref{eq:heat_eq_coef_power_ser}
    introduced in Section~\ref{sect:expansion_u_power_ser}. Returning to the initial
    problem, we obtain the following expansion of the Fourier--Laplace transform:
    \begin{align*}
        \E\!\left[\exp\!\left(q(W_T) + \int_0^T r(W_s)\,ds\right) \,\bigg|\, \F_t\right]
        = \exp\!\left(\int_0^t r(W_s)\,ds\right)\!\left(\sum_{n \geq 0}\bm{v}_t^n
        \frac{W_t^n}{n!}\right).
    \end{align*}
\end{sqremark}

\subsection{Complications with the closed form solution of the linear equation}\label{sect:explicit_sol_heat}

In this section, we show that the solution to the linear ODE \eqref{eq:heat_eq_coef_sig} can be found in closed form when the terminal condition has a finite number of non-zero terms,  i.e.~$\bsigma \in T(\C^2)$.

\begin{proposition}\label{prop:explicit_sol}
    Fix $\bsigma \in T(\C^2)$. A solution of \eqref{eq:heat_eq_coef_sig} with the terminal condition $\bu_T = \bsigma$ is given by
    \begin{equation}\label{eq:heat_eq_explicit_sol}
        \bu_t = \bsigma|_{\widehat{\mathcal{E}}_{T-t}} = e^{(T-t)\mathscr{L}}\bsigma, \quad t \in [0, T],
    \end{equation}
    where $e^{(T-t)\mathscr{L}}$ denotes the operator exponential and $\widehat{\mathcal{E}}_{T-t} = \E[\sigW[t, T]] = e^{\otimes(T-t)(\word{0} + \frac{1}{2}\word{11})}$ denotes the expected time-augmented Brownian signature.
\end{proposition}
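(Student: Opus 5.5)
The plan is to use that $\bsigma$ has finite degree, so that $\mathscr L$ acts nilpotently on it. Indeed, $\mathscr L$ maps $T(\C^2)$ into itself and strictly lowers the degree: $\deg(\bell\proj{0}) \le \deg(\bell)-1$ and $\deg(\bell\proj{11})\le \deg(\bell)-2$. Hence $\mathscr L^k\bsigma = 0$ for $k > \deg(\bsigma)$, and the operator exponential
\[
e^{(T-t)\mathscr L}\bsigma = \sum_{k=0}^{\deg(\bsigma)} \frac{(T-t)^k}{k!}\mathscr L^k \bsigma
\]
is a finite sum, a polynomial in $t$ with values in $T(\C^2)$. Differentiating termwise gives $\dot\bu_t = -\mathscr L e^{(T-t)\mathscr L}\bsigma = -\mathscr L\bu_t$, and $\bu_T=\bsigma$. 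The equation holds componentwise for each word, which is all that \eqref{eq:heat_eq_coef_sig} requires. So the second equality in \eqref{eq:heat_eq_explicit_sol} already gives a solution.

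It remains to identify $e^{(T-t)\mathscr L}\bsigma$ with $\bsigma|_{\widehat{\mathcal E}_{T-t}}$. First, the right shifts compose as $(\bell|_{\word v})|_{\word w} = \bell|_{\word{wv}}$, since $(\bell|_{\word v})^{\word u \word w} = \bell^{\word{uwv}}$. Extending linearly, for finite-degree elements $\bm a,\bm b$ of $T(\C^2)$ we get $(\bell|_{\bm a})|_{\bm b} = \bell|_{\bm b\otimes \bm a}$. In particular $\mathscr L\bell = \bell|_{\word 0 + \frac12\word{11}}$, and therefore $\mathscr L^k\bell = \bell|_{(\word 0+\frac12\word{11})^{\otimes k}}$. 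Summing over $k$ then gives
\[
e^{(T-t)\mathscr L}\bsigma = \bsigma|_{e^{\otimes (T-t)(\word 0+\frac12\word{11})}}.
\]
This sum is finite because $\bsigma|_{\word v}=0$ whenever $|\word v|>\deg(\bsigma)$, so only finitely many levels of the tensor exponential contribute.

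The remaining step is to check that $\E[\sigW[t,T]] = e^{\otimes (T-t)(\word 0+\frac12\word{11})}$. For this I will apply the Itô formula of Theorem~\ref{thm:ito_formulas} coordinatewise to $s\mapsto \sigW[t,s]$, i.e.\ $d\sigW[t,s] = \sigW[t,s]\otimes(\word 0\,ds + \word 1\,dW_s + \frac12 \word{11}\,ds)$, and take expectations. Each coordinate is a polynomial in Brownian iterated integrals with all moments finite, so the stochastic integrals are true martingales. The mean $m_s=\E[\sigW[t,s]]$ then solves the linear ODE $\dot m_s = m_s\otimes(\word 0+\frac12\word{11})$ with $m_t=\emptyword$. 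This ODE is triangular level by level, so its unique solution is the tensor exponential.

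Alternatively, this identification can be bypassed: the expectation form follows from Proposition~\ref{prop:projections_ppties}, which gives $\bracket{\bsigma|_{\E[\sigW[t,T]]}}{\sigX[]} = \E\bracket{\bsigma}{\sigX[]\otimes\sigW[t,T]}$, a finite sum, so the expectation commutes with the pairing. The only mildly delicate point is justifying the martingale property, that is, the integrability of the finitely many signature coordinates involved. This is routine given Lemma~\ref{lem:sig_bound} and the Gaussian moments of $W$.
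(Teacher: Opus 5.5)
Your proof is correct and follows the paper's argument. The key steps are exactly the paper's: the identity $\mathscr L^k\bsigma = \bsigma|_{(\word 0+\frac12\word{11})^{\otimes k}}$ (which you justify more carefully via the composition rule for right shifts), the finiteness of the exponential sum for $\bsigma\in T(\C^2)$, and termwise differentiation. The paper's martingale computation $\E[\langle\bsigma,\sigW[T]\rangle\,|\,\mathcal F_t]=\langle\bsigma|_{\widehat{\mathcal E}_{T-t}},\sigW[t]\rangle$ only supplies the probabilistic interpretation.

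You also verify $\E[\sigW[t,T]]=e^{\otimes(T-t)(\word 0+\frac12\word{11})}$, which the paper takes for granted. Your ``alternative'' paragraph would not by itself replace that identification, but your main argument does not rely on it.
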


\begin{proof}
Consider the martingale
$$
M_t = \E\left[\bracketsigW[T]{\bsigma} \Big|\, \mathcal{F}_t\right], \quad t \in [0, T].
$$
The expression on the right-hand side can be computed explicitly:
\begin{equation}\label{eq:sum_swap_1}
    M_t = \E\left[\bracketsigW[T]{\bsigma} \Big|\, \mathcal{F}_t\right] = \bracket{\bsigma}{\E[\sigW[T] |\, \mathcal{F}_t]} = \bracket{\bsigma}{\E[\sigW[t]\otimes\sigW[t,T] |\, \mathcal{F}_t]} = \bracket{\bsigma}{\sigW\otimes\widehat{\mathcal{E}}_{T-t}},
\end{equation}
where we used Chen's identity and the fact that $\sigW$ is $\F_t$-measurable. By the first part of Proposition~\ref{prop:projections_ppties}, we have
\begin{equation}\label{eq:sum_swap_2}
    M_t =\bracket{\bsigma}{\sigW\otimes\widehat{\mathcal{E}}_{T-t}} = \bracket{\bsigma|_{\widehat{\mathcal{E}}_{T-t}}}{\sigW} = \bracket{\bu_t}{\sigW},
\end{equation}
where we set $\bu_t = \bsigma|_{\widehat{\mathcal{E}}_{T-t}}$. 

To prove the second part of \eqref{eq:heat_eq_explicit_sol}, we note that $\bsigma|_{\word{0} + \frac12\word{11}} = \bsigma\proj{0} + \dfrac{1}{2}\bsigma\proj{11} = \mathscr{L}\bsigma$, and hence, for $k \geq 0$,
\begin{equation}\label{eq:proj_k_times}
    \bsigma|_{(\word{0} + \frac12\word{11})^{\otimes k}} = \mathscr{L}^k\bsigma,
\end{equation}
so that 
$$
\bu_t = \bsigma|_{\widehat{\mathcal{E}}_{T-t}} = \bsigma|_{e^{\otimes(T-t)(\word{0} + \frac12\word{11})}} = e^{(T-t)\mathscr{L}}\bsigma,
$$
where we used the linearity of the shift and \eqref{eq:proj_k_times}. {Note that the exponential $\bu_t = e^{(T-t)\mathscr{L}}\bsigma$ contains only a finite number of nonzero terms $\frac{1}{k!}(T-t)^k\mathscr{L}^k\bsigma$. Differentiating it implies that $\bu_t$ is a solution of \eqref{eq:heat_eq_coef_sig}.}
\end{proof}
\begin{sqremark}
   The first part $\bu_t = \bsigma|_{\widehat{\mathcal{E}}_{T-t}}$ of the representation \eqref{eq:heat_eq_explicit_sol} has been used  in \cite*[Theorem 5.2]{abi2024path} and \cite*[Lemma 4.4]{abijaber2025signature}.
\end{sqremark}

The reader can easily verify that the representation~\eqref{eq:heat_eq_explicit_sol} still holds when $\bsigma$ belongs to a larger class of exponentially dominated coefficients $\mathcal{A}_{\exp}$ (see \cite*[Subsection~3.4 and Section~5]{abi2024path}). Without giving the precise definition of this class, which is not further used in this paper, we mention only that the power series analogue of this class consists of the coefficients $\bsigma \in T(\C)$ such that 
$$
\sum_{n \geq 0}|\bsigma^{n}| |x|^n \leq C_1 \exp({C_2|x|}), \quad x \in \R,
$$
for some $C_1, C_2 > 0$, which guarantees that the expectation and the bracket can be interchanged in \eqref{eq:sum_swap_1} and that the summation order can be changed in \eqref{eq:sum_swap_2}.

However, in our case of interest, $\bsigma = \shuexp{\bp}$, the terminal condition does not belong to $T(\C^2)$ and, in general, is not exponentially dominated either (as soon as $\deg_{\word{1}}(\bp) \geq 2$). In this case, the equalities \eqref{eq:sum_swap_1} and \eqref{eq:sum_swap_2} no longer hold, as illustrated in the following example.

\begin{sqexample}
    Consider $\bsigma = \shuexp{(-24 \cdot \word{1111})}$, i.e., $\bracketsigW{\bsigma} = \exp({-W_t^4})$. The bracket and the expectation in \eqref{eq:sum_swap_1} cannot be interchanged; indeed, for $t = 0$,
    \begin{align*}
        \E\left[\exp({-W_T^4})\right] = \E\left[\sum_{k \geq 0}\dfrac{(-1)^kW_T^{4k}}{k!} \right] \neq \sum_{k \geq 0}\dfrac{(-1)^k\E\left[W_T^{4k}\right]}{k!} = \sum_{k \geq 0}\dfrac{(-1)^k T^{2k}(4k)!}{2^{2k}(2k)!k!}\,,
    \end{align*}
    since the latter series diverges.
    This is also the case for the coefficients given by \eqref{eq:heat_eq_explicit_sol}  (see Example~\ref{ex:eplicit_sol_pol}).
\end{sqexample}

This motivates us to characterize the coefficients $\bu_t$ as the solution to the linear differential equation \eqref{eq:heat_eq_coef_sig} and subsequently transform it into a numerically more tractable Riccati equation, rather than using the divergent closed-form solution \eqref{sect:explicit_sol_heat}.

\subsection{On the uniqueness for the linear equation}\label{subsec:uniqueness_heat}
Theorem \ref{theorem:existence_heat_eq} produces a solution $\bm u$ to \eqref{eq:heat_eq_from_thm}, but this equation admits infinitely many solutions. Uniqueness fails without further growth conditions, as in the classical \cite{tychonoff1935theoremes} example for the heat equation on $\R$ (see Remark \ref{remark:non_uniqueness_Tychonoff}). The following theorem identifies a class of solutions for which uniqueness holds. The first condition is a regularity condition, intrinsic to the operator $\mathscr L\,$, which ensures that It\^o's formula can be applied. The second condition controls the growth of the solution, and is calibrated to the Gaussian tail of $\sup_{s \leq t} |W_s|\,$. In the following, we say that $(\bm u_t)_{t \in [0, T]}$ is a solution to the linear equation \eqref{eq:heat_eq_coef_sig} if $\bm u_{\cdot}^{\wv}$ is continuously differentiable on $[0\,,T]\,$, and $ \dot{\bm u}_t + \mathscr L \bm u_t = 0$ for all $t \in [0\,, T]\,$.

\begin{definition}
    \label{def:admissible solution}
    A solution $(\bu_t)_{t \in [0, T]}$ to the linear equation \eqref{eq:heat_eq_coef_sig} is called It\^o-admissible if $\|\bm u_t\|_{\sigW[s]}$ is a.s. finite for all $(t,s) \in [0\,,T]^2\,$, and for any word $\wv \neq \emptyword\,$, there exists a locally bounded function $f_{\wv}\colon \R_+ \to \R_+$ such that 
    $$
    \sup_{t \in [0, T]} \left(\|\bm u_t|_{\word v} \|_{\sigW[s]}\right) \leq f_{\wv}\left(\sup_{0 \leq r \leq s} |W_r|\right)\,, \quad s \in [0\,,T]\,.
    $$
\end{definition}

The It\^o admissibility in Definition \ref{def:admissible solution}, along with the fact that $\dot{\bm u} + \bm u|_{\word 0} + \frac{1}{2} \bm u|_{\word{11}} = 0$, ensure that we can apply It\^o's formula \eqref{eq:right_ito} to $t \mapsto \bracket{\bm u_t|_{\wv}}{\sigW[t]}$ for any word $\word v \in V\,$. The solution $\bm u$ constructed in Theorem \ref{theorem:analyticity_general} is It\^o admissible thanks to \eqref{eq:bound_norm_shit_u} and Lemma \ref{lem:sig_bound}, which allows one to control $M_{\bp}(\sigW[s])$ by $\sup_{0 \leq r \leq s}|W_r|^{\deg_{\word 1}(\bp)}\,$.

The following theorem gives uniqueness of It\^o-admissible solutions under a sub-Gaussian growth condition of the associated linear functional.
\begin{theorem}
    \label{thm:uniqueness_heat_eq}
 Let $(\bu_t)_{t \in [0, T]}$ be an It\^o-admissible solution to \eqref{eq:heat_eq_coef_sig}, with terminal condition $\bu_T = 0\,$. Assume moreover that there exists $g : \R_+ \to \R_+$ with $g(x) = o(e^{c x^{2}})$ as $x \to \infty$ for some $c > 0\,$, such that 
    \begin{equation}\label{eq:sub_gaussian_growth}
        \sup_{t \in [0, T]}|\bracket{\bm u_t}{\sigW[s]}| \leq g\left(\sup_{0 \leq r \leq s}|W_r|\right)\,, \quad s \in [0\,,T]\,.
    \end{equation}
    Then $\bu \equiv 0$ on $[0\,,T]\,$.
\end{theorem}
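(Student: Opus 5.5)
The plan is a martingale argument combined with a time-shift/recentering trick, followed by Lemma~\ref{lem:uniq_coef} to pass from vanishing pairings to vanishing coefficients. Fix $t_0 \in [0,T)$. By Itô-admissibility and the equation $\dot{\bm u} + \mathscr L \bm u = 0$, Itô's formula \eqref{eq:right_ito} applies to the process $M_s := \bracket{\bm u_{t_0+s}}{\sigW[s]}$, $s \in [0, T-t_0]$. The $ds$-terms cancel, so
\[
M_s = M_0 + \int_0^s \bracket{\bm u_{t_0+r}|_{\word 1}}{\sigW[r]}\,dW_r ,
\]
and $M$ is a local martingale. The time-shifted coefficient $\bm u_{t_0+s}$ also satisfies the equation, so the same argument works for every $t_0$. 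Localize with $\tau_R := \inf\{s : |W_s| \geq R\}$. On $[0,\tau_R]$, the integrand is bounded by $f_{\word 1}(R)$ thanks to Itô-admissibility. Hence the stopped process is a true martingale, and $M_0 = \E[M_{(T-t_0)\wedge \tau_R}]$.

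Next let $R \to \infty$. On $\{\tau_R > T - t_0\}$ we get $M_{T-t_0} = \bracket{\bm u_T}{\sigW[T-t_0]} = 0$. On the complement, \eqref{eq:sub_gaussian_growth} gives $|M_{\tau_R}| \leq g(R)$. This yields
\[
|M_0| \leq g(R)\,\P\Big(\sup_{r \leq T} |W_r| \geq R\Big) \leq g(R)\, 4 e^{-R^2/(2T)}.
\]
This tends to $0$ as $R\to\infty$, provided $c < 1/(2T)$. This is the main subtle point: the hypothesis says $g = o(e^{cx^2})$ for some $c > 0$, but the statement should be read so that $c$ is small enough relative to $T$. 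Otherwise I would have to split $[t_0,T]$ into short subintervals of length $h$ with $c < 1/(2h)$ and run a backward induction. That induction would require the vanishing of $\bm u_{t_1}$ as a function on group-like elements (not just at $\emptyword$), which is exactly what the next step provides. Either way we conclude $\bm u_{t_0}^{\emptyword} = M_0 = 0$ for all $t_0$.

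The pairing $M_0$ only gives the empty-word coefficient, so to get all coefficients I would apply the same argument to the shifted solutions $\bm u|_{\word v}$. These do not satisfy the equation directly. Instead, I would use the recentered process $s \mapsto \bracket{\bm u_{t_0+s}}{\sigX[] \otimes \sigW[s]}$ for a group-like $\sigX[]$ given by the signature of a piecewise-linear path. By Proposition~\ref{prop:projections_ppties}, this equals $\bracket{\bm u_{t_0+s}|_{\sigW[s]}}{\sigX[]}$, which is again a local martingale by the same Itô computation, since concatenation commutes with the generator. Running the localization above with the path $X$ concatenated with $W$ (the running maximum is shifted by at most $\|X\|_\infty$, and the sub-Gaussian estimate is unaffected) gives $\bracket{\bm u_{t_0}}{\sigX[]} = 0$ for all such $\sigX[]$ with small coordinates. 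Lemma~\ref{lem:uniq_coef} then gives $\bm u_{t_0} = 0$. The delicate step here is controlling the growth bounds when $\sigW[s]$ is replaced by $\sigX[]\otimes\sigW[s]$. I would handle it by noting that this is the signature of the concatenated semimartingale path, to which the admissibility bounds (stated in terms of the sup of the path) extend.
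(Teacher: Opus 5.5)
Your localization steps are sound and agree with the paper's: It\^o's formula makes the time-shifted pairing a local martingale, and stopping at the exit time of $|W|$ together with the Gaussian tail of the running maximum kills the boundary term. Since $g=o(e^{cx^2})$ becomes a weaker condition as $c$ grows, the hypothesis cannot be read with $c<1/(2T)$. The splitting into intervals of length $h<1/(2c)$ with backward induction is therefore forced, exactly as in the paper.

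The genuine gap is your third step. The hypotheses (It\^o-admissibility and \eqref{eq:sub_gaussian_growth}) only control pairings of $\bu_t$ and its shifts against the Brownian signatures $\sigW[s]$, almost surely. They are not functional bounds valid on $G$ or along arbitrary semimartingale paths. For a deterministic piecewise-linear $\sigX[]$, nothing guarantees any of the following:
\begin{itemize}
\item that $\|\bu_t\|_{\sigX[]\otimes\sigW[s]}<\infty$;
\item the rearrangement $\bracket{\bu_t}{\sigX[]\otimes\sigW[s]}=\bracket{{}_{\sigX[]}|\bu_t}{\sigW[s]}$, since Proposition~\ref{prop:projections_ppties} gives it only for finite-degree coefficients;
\item It\^o-admissibility of ${}_{\sigX[]}|\bu$;
\item the growth bound along the concatenated path.
\end{itemize}
Since $\bracket{\bu_t}{\cdot}$ is an infinite series with no continuity at hand, you also cannot transfer the almost-sure bounds from $\sigW[s]$ to deterministic elements. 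So ``the admissibility bounds extend'' is exactly what does not follow. Corollary~\ref{cor:uniqueness_heat} does evaluate on concatenations, but only for the specific $\bu=\E[\bxi]$, via its probabilistic representation. Without this step your induction also stalls, because restarting on the next interval requires $\bu_{T-h}=0$, not just $\bu_{T-h}^{\emptyword}=0$.

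The missing idea is to probe with Brownian signatures only, and to recover the higher coefficients from the semimartingale decomposition rather than from Lemma~\ref{lem:uniq_coef}.
\begin{itemize}
\item First, condition at a positive Brownian time rather than evaluating at $0$. For $t\in(T-h,T)$ and $s\in[0,t-T+h]$, apply your argument to $Z_r=\bracket{\bu_{t-s+r}}{\sigW[r]}$, $r\in[0,T-t+s]$, using $\E[\,\cdot\mid\F_s]$. This yields $\bracket{\bu_t}{\sigW[s]}=0$ a.s.\ for all $s$ in a nondegenerate interval.
\item Then, for fixed $t$, the It\^o process $s\mapsto\bracket{\bu_t}{\sigW[s]}$ vanishes identically, so its martingale part and its drift both vanish. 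This gives $\bracket{\bu_t\proj{1}}{\sigW[s]}\equiv0$ and $\bracket{\bu_t\proj{0}+\tfrac12\bu_t\proj{11}}{\sigW[s]}\equiv0$.
\item Repeating with $\bu_t\proj{1}$ gives $\bracket{\bu_t\proj{11}}{\sigW[s]}\equiv0$, hence $\bracket{\bu_t\proj{0}}{\sigW[s]}\equiv0$.
\item By iteration, $\bracket{\bu_t|_{\word v}}{\sigW[s]}\equiv0$ for all words, and evaluating at $s=0$ gives $\bu_t^{\word v}=0$.
\end{itemize}
This is why Definition~\ref{def:admissible solution} controls all shifts $\bu|_{\word v}$, not just $\bu\proj{1}$: it makes It\^o's formula available for each $s\mapsto\bracket{\bu_t|_{\word v}}{\sigW[s]}$. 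The paper takes this last step from \citet*[Lemma~3.9]{aqsha2026solving}.
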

\begin{proof}
Let $\ell \geq 2$ be an integer such that $T / \ell < 1 / (2c)\,$, where $c > 0$ is the constant in the decay condition on $g\,$, and define $h := T / \ell\,$. The proof proceeds by iteration over intervals of length $h\,$. On each interval, a localization argument combined with the decay condition on $g$ yields $\bm u_t \equiv 0\,$, and the result will follow by patching. We fix $t \in (T-h\,, T)$ and $s \in [0\,, t - T + h]\,$.
 The It\^o admissibility of $\bm u$ along with the fact that $\dot{\bu}_t + \mathscr{L}\bu_t = 0$ ensure that we can apply It\^o's lemma to 
    $$
    Z_r := \bracket{\bm u_{t - s + r}}{\sigW[r]}\,, \quad  r \in [0\,, T- t + s]\,,
    $$
    yielding
    \begin{align*}
    Z_r &= Z_0 + \int_0^r \bracket{ \dot{\bm u}_{t-s + u} + \mathscr L \bm u_{t - s + u}}{\sigW[ u]} \, du + \int_0^r \bracket{\bm u_{t-s+ u}|_{\word 1}}{\sigW[ u]} \, dW_u \\
    &= Z_0 + \int_0^r \bracket{\bm u_{t-s+ u}|_{\word 1}}{ \sigW[ u]} \, dW_u \,,\quad  r \in [0\,,T-t + s]\,,
    \end{align*}
    so that $Z$ is a local martingale. 
    {Observe that the process $Z$ satisfies:
    $$
    Z_s = \bracket{\bu_t}{\sigW[s]}, \quad Z_{T - t + s} = \bracket{\bu_T}{\sigW[T-t+s]} = 0,
    $$
    where the last equality follows from the terminal condition $\bu_T = 0$.}
    For any $n \geq 1\,$, the stopping time $\tau_n := \inf\{r \geq 0\,:\, |W_r| \geq n\}$ is almost surely finite. By continuity of the Brownian motion, we have $\tau_n \to +\infty$ almost surely as $n \to + \infty\,$. We also have the bound
    $$
    |Z_{r \wedge \tau_n}| \leq g\left(\sup_{0 \leq u \leq r \wedge \tau_n} |W_u|\right) \leq g(n)\,, \quad r \in [0\,,T-t + s]\,, \quad n \geq 1\,,
    $$
    so that $(\tau_n)_{n \geq 1}$ is a localizing sequence of stopping times for $Z\,$. We use this to bound $\bracket{\bm u_t}{\sigW[s]} = Z_s$ as follows
    \begin{align*}
        |Z_{s \wedge \tau_n}| &= |\E[Z_{(T-t + s) \wedge \tau_n}\,|\,\mathcal F_s]| \\
        &= |\E[Z_{\tau_n} \indic{\tau_n \leq T-t + s} \, |\, \mathcal F_s]|\\
        &\leq \E[|Z_{\tau_n}| \indic{\tau_n \leq T-t + s}\,|\,\mathcal F_s]\\
        &\leq g(n)\,\P(\tau_n \leq T-t+s \,|\,\mathcal F_s)\,, \quad n \geq 1.
    \end{align*}
     Taking expectations in the previous yields 
    \begin{equation}
    \label{eq:bound_exp_Z_proof_uniqueness}
    \E[|Z_{s \wedge \tau_n}|] \leq g(n)\, \P(\tau_n \leq T - t+ s)\,, \quad n \geq 1\,.
    \end{equation}
    It remains to quantify the behavior of $\P(\tau_n \leq T - t + s)$ as $n \to + \infty\,$. This can be done using classical Gaussian tail estimates:
    {\begin{align*}
        \P(\tau_n \leq T - t + s) = \P\left(\sup_{ r \in [0,T-t + s]}|W_r| \geq n\right) \leq 4\exp \left(-\frac{n^2}{2(T- t +s)} \right) \leq 4\exp(-cn^2),
    \end{align*}
    where in the last inequality we used the range of $t$ and $s$ ensuring that $0 < T - t + s \leq h <  (2c)^{-1}\,$.}
    Hence, applying Fatou's lemma in \eqref{eq:bound_exp_Z_proof_uniqueness} as $n$ goes to infinity, we obtain $\bracket{\bm u_t}{\sigW[s]} = 0\,\, a.s.\,$, by virtue of the growth assumption on the function $g\,$.

    By the same arguments as in the proof of \citet*[Lemma 3.9]{aqsha2026solving}, the fact that $\bracket{\bm u_t}{\sigW[s]} = 0$ for all $s \in [0\,, t - T + h]\,$, combined with It\^o admissibility of $\bm u|_{\word v}$ for any word $\wv \neq \emptyword\,$, implies $\bm u_t = 0\,$.
    Thus, we have proved that $\bm u_t = 0$ for all $t \in [T-h\,, T]\,$. We can repeat this process for $t \in [T - 2h\,, T -h]\,$, using the new terminal condition $\bm u_{T - h} = 0$ to verify that $\bm u$ vanishes on this interval as well. Iterating this until $[T - \ell h\,, T - (\ell -1)h] = [0\,, T / \ell]\,$, we finally obtain that $\bm u \equiv 0$ on $[0\,, T]\,$, which concludes the proof.
\end{proof}
We note that our growth condition on $\bracket{\bm u_t}{\cdot}$ reduces to the one from \cite{tychonoff1935theoremes} in the Markovian case $\bm u_t = \sum_{n \geq 0} \bm u^k_t \word{1}^{\otimes k}$ of Section \ref{sect:expansion_u_power_ser}. This condition is sharp as \cite{tychonoff1935theoremes} shows that for any $\varepsilon$, a non-zero solution satisfying $|\bracket{\bm u_T}{\mathbbm x}| < \exp(x^{2 + \varepsilon})$ and $\bm u_T = 0$ can be constructed.

Finally, we are able to prove that our solution belongs to this uniqueness class in the following Corollary.
\begin{corollary}\label{cor:uniqueness_heat}
    The solution $\bu$ constructed by probabilistic representation in Theorem \ref{theorem:existence_heat_eq}, with $\bm p \in \mathcal B\,$, is the unique It\^o-admissible solution to \eqref{eq:heat_eq_from_thm} satisfying the sub-Gaussian growth condition \eqref{eq:sub_gaussian_growth}.
\end{corollary}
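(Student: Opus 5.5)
The proof has two parts. First, the constructed $\bu$ is an It\^o-admissible solution satisfying \eqref{eq:sub_gaussian_growth}. Second, any other such solution coincides with it, by linearity and Theorem~\ref{thm:uniqueness_heat_eq}.

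For the first part, Theorem~\ref{theorem:existence_heat_eq} already gives that each $t\mapsto\bu_t^{\wv}$ is $C^1$, that $\dot\bu_t+\mathscr L\bu_t=0$, and that $\bu_T=\shuexp{\bp}$. For It\^o-admissibility I take the bound \eqref{eq:bound_norm_shit_u}, with $\sigX[]=\sigW[s]$, which controls $\sup_t\|\bu_t|_{\wv}\|_{\sigW[s]}$ by a continuous increasing function of $M_{\bp}(\sigW[s])$. Lemma~\ref{lem:sig_bound}, applied to $X=W$ on $[0,s]$, bounds every $|\sigW[s]^{\word w}|$ with $|\word w|\le\deg(\bp)$ by $C(1+\sup_{r\le s}|W_r|^{\deg(\bp)})$. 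Hence $M_{\bp}(\sigW[s])\le \phi(\sup_{r\le s}|W_r|)$ for a polynomial $\phi$, and composing the two bounds gives a locally bounded $f_{\wv}$. The case $\wv=\emptyword$ is Theorem~\ref{theorem:analyticity_general}, item 2, and it also gives finiteness of $\|\bu_t\|_{\sigW[s]}$.

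The growth condition is where the crude estimate fails. Bounding $|\bracket{\bu_t}{\sigW[s]}|$ by $\|\bu_t\|_{\sigW[s]}$ yields $\exp(C(1+\sup|W|)^{\deg(\bp)\deg_{\word 1}(\bp)})$, which is not $o(e^{cx^2})$ in general. I would instead use the probabilistic representation \eqref{eq:charsigX} with Chen's identity:
$$\bracket{\bu_t}{\sigW[s]}=\E\big[\exp(\bracket{\bp}{\sigX[]\otimes\sigW[t,T]})\big]\Big|_{\sigX[]=\sigW[s]}.$$
Here $\sigX[]\otimes\sigW[t,T]$ is the time-augmented signature of a continuous semimartingale, namely the path $W$ on $[0,s]$ concatenated with an independent Brownian piece. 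Proposition~\ref{prop:boundedness_dot_product_B} then gives $\bracket{\Re(\bp)}{\cdot}\le C_T$ uniformly, with $C_T$ possibly depending on the horizon $s+T-t\le 2T$, which is harmless. Therefore $|\bracket{\bu_t}{\sigW[s]}|\le e^{C}$, a constant, and \eqref{eq:sub_gaussian_growth} holds with $g$ constant. Equivalently, one can apply Proposition~\ref{prop:boundedness_dot_product_B} to ${}_{\sigX[]}|\bp\in\mathcal B$ (Lemma~\ref{lemma:recentering_property_class_B}), but then the constant must be checked to be uniform in $\sigX[]$, so the concatenation view is cleaner. This step needs the most care: I must confirm that the constant in Proposition~\ref{prop:boundedness_dot_product_B} does not depend on the path and covers concatenated paths on a horizon up to $2T$.

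Uniqueness: let $\bm v$ be another It\^o-admissible solution of \eqref{eq:heat_eq_from_thm} satisfying \eqref{eq:sub_gaussian_growth} with some $g_2$. Then $\bm w=\bu-\bm v$ solves \eqref{eq:heat_eq_coef_sig} with $\bm w_T=0$. It is It\^o-admissible, taking $f_{\wv}$ to be the sum of the two, since shifts are linear and the seminorm is subadditive. It satisfies \eqref{eq:sub_gaussian_growth} with $g=e^C+g_2$, which is still $o(e^{c'x^2})$ for $c'$ the larger of the two constants. Theorem~\ref{thm:uniqueness_heat_eq} then gives $\bm w\equiv0$.
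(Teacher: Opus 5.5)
Your proposal is correct and follows essentially the paper's route. You obtain It\^o-admissibility from \eqref{eq:bound_norm_shit_u} combined with Lemma~\ref{lem:sig_bound}, and a constant (hence sub-Gaussian) bound on $|\langle \bm u_t, \widehat{\mathbb W}_s\rangle|$ by viewing $\widehat{\mathbb W}_s\otimes\widehat{\mathbb W}_{t,T}$, with an independent Brownian piece, as the signature of a concatenated semimartingale and applying Proposition~\ref{prop:boundedness_dot_product_B}; you then apply Theorem~\ref{thm:uniqueness_heat_eq} to the difference of two solutions. Your remarks that the crude seminorm bound is not sub-Gaussian, and that the constant must be taken on a horizon of up to $2T$, are accurate refinements of points the paper passes over quickly.
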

\begin{proof} Let
$W$ be the Brownian motion that appears in the definition of $\bu$ in Theorem \ref{theorem:existence_heat_eq}. Since $\bu$ is Itô-admissible, it suffices to prove that it satisfies \eqref{eq:sub_gaussian_growth} for any Brownian motion $B$ independent of $W$, so that an application of Theorem~\ref{thm:uniqueness_heat_eq} to the difference between $\bu$ and any other solution satisfying the required conditions yields the result. To prove \eqref{eq:sub_gaussian_growth} for $\bu$, we
    let $B$ be a  Brownian motion independent of $W$ and $\widehat{\mathbb B}$ its time augmented signature. Then, taking $\bxi_t := \shuexp{\bp|_{\sigW[t,T]}}$ as in \eqref{eq:def_xi}, we have $\bracket{\bxi_t}{\widehat{\mathbb B}_{s}} = \exp(\bracket{\bp}{\widehat{\mathbb B}_{s} \otimes \sigW[t,T]})\,$. By Chen's identity \eqref{eq:Chen}, $\widehat{\mathbb B}_{s} \otimes \sigW[t,T]$ is the time-augmented signature of the concatenation of the paths $(B_r)_{r \in [0,s]}$ and $(W_r - W_t)_{r \in [t,T]}\,$, and thus it is the time-augmented signature of a continuous semimartingale. Hence, we can apply Proposition \ref{prop:boundedness_dot_product_B} which states that $|\bracket{\bxi_t}{\widehat{\mathbb B}_{s}}| = \exp(\bracket{\Re(\bp)}{\widehat{\mathbb B}_{s} \otimes \sigW[t,T]}) \leq \exp(C)$ almost surely, locally uniformly in $s$ and $t\,$. Therefore, this is also the case for $|\bracket{\bm u_t}{\widehat{\mathbb B}_{s}}| \leq \E_W[|\bracket{\bxi_t}{\widehat{\mathbb B}_{s}}|]\,$, which shows that $\bu$ satisfies \eqref{eq:sub_gaussian_growth}.
\end{proof}

\subsection{Riccati equation for the Brownian signature}
\label{subsec:existence_Riccati} 
Theorem 
\ref{theorem:existence_heat_eq} and the shuffle-logarithm $\bpsi = \log^{\shuprod}\bu$ together yield our second main result: the deterministic coefficients $(\bpsi_t)_{t \in [0, T]}$ constructed in Theorem \ref{theorem:log_analyticity_general} satisfy the \textit{signature Riccati equation}:
\begin{equation}\label{eq:Riccati_coefs}
    \dot{\bm \psi}_t + \mathscr{R} \bm \psi_t = 0\,, \quad t \in [0, T]\,,
\end{equation}
with terminal condition $\bm \psi_T = \bm p\,$, where the nonlinear operator $\mathscr R$ is given by $$\mathscr R \bell := \bell|_{\word 0} + \frac{1}{2} \bell|_{\word{11}} + \frac{1}{2}(\bm \bell|_{\word 1})\shupow{2}\,, \quad \bell \in \eTAC[2]\,.$$
This is the tensor algebra analogue of the classical Hamilton--Jacobi--Bellman equation
$$
\partial_t \psi + \frac{1}{2} \partial_{xx} \psi + \frac{1}{2} (\partial_x \psi)^2 = 0\,,
$$
obtained from the linear equation via the Cole--Hopf transform, and reduces to it exactly in the power series case of Section \ref{subsec:power_series_psi}. Crucially, \eqref{eq:Riccati_coefs} is the equation one would naturally seek to solve in order to represent the conditional log-Fourier--Laplace transform $\log \E[\exp(\bracket{\bp}{\sigW[T]})\,|\,\mathcal F_t] = \bracket{\bpsi_t}{\sigW[t]}\,$, and Theorem \ref{theorem:existence_riccati} below provides the first rigorous proof that this equation admits a solution for general $\bp \in \mathcal B\,$. This is obtained not by solving the equation directly, which appears intractable in the infinite-dimensional setting, but by deriving it from the linear equation via the algebraic Cole--Hopf transform at the level of the tensor algebra coefficients.

\begin{theorem}\label{theorem:existence_riccati}
    Let $\bm p \in \mathcal B\,$. Assuming that $\E[\exp(\bracket{\bm p}{\sigW[t,T]})] \neq 0$ for all $t \in [0,T]\,$, and letting $\bm \psi$, $\varepsilon$, and $r$ be as in Theorem \ref{theorem:log_analyticity_general}, we have
    \begin{enumerate}
        \item For any $\word v \in V\,$, the function $t \mapsto \bm \psi_t^{\word v}$ is $C^1$ on $[0\,, T]\,$.
        \item The following equation is satisfied:
    \begin{equation}\label{eq:Riccati_BM}
        \begin{cases}
            \dot{\bpsi}_t  + \mathscr{R}\bpsi_t = 0\,, \quad t \in [0, T]\\
            \bm \psi_T = \bm p
        \end{cases}
        \end{equation}
        \item {For any $\word v \neq \emptyword\,$, there exists a constant $C'_{\bp, T, \wv} > 0$ such that for all $\sigX[] \in G_{\bp, r}$,} 
        $$
        \sup_{t \in [0,T]}\|\bpsi_t|_{\word v}\|_{\sigX[]} \leq C'_{\bp, T, \wv} \left(\left(1 + \frac{1}{\varepsilon \inf_{t \in [0,T]} |\bu_t^{\emptyword}|}\right)(1 + M_{\bp}(\sigX[])) \exp\left(C_{\bp, T} (1 + M_{\bp}(\sigX[]))^{\deg_{\word 1}(\bp)} \right)\right)^{|\wv|}.
        $$
    \end{enumerate}
\end{theorem}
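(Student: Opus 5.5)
The plan is to derive everything from the linear equation of Theorem~\ref{theorem:existence_heat_eq} via an algebraic Cole--Hopf transform, working coefficientwise in $\bpsi_t = \log^{\shuprod}(\bu_t)$ as in \eqref{eq:def_log_shuffle_psi}. The first point comes from finiteness. Each coefficient $\bpsi_t^{\word v}$ with $\word v \neq \emptyword$ is a finite sum, since $\overline{\bu}_t$ has no empty-word component. Hence $(\overline{\bu}_t/\bu^{\emptyword}_t)\shupow{n}$ contributes to the word $\word v$ only for $n \leq |\word v|$. So $\bpsi_t^{\word v}$ is a polynomial in finitely many $\bu_t^{\word w}$, $|\word w| \leq |\word v|$, divided by a power of $\bu_t^{\emptyword}$. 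By Theorem~\ref{theorem:existence_heat_eq} each $\bu^{\word w}$ is $C^1$, and $\bu^{\emptyword}$ does not vanish, so $\bpsi^{\word v}$ is $C^1$. For $\word v = \emptyword$, the continuous branch $\bpsi^{\emptyword} = \log \bu^{\emptyword}$ is $C^1$ with derivative $\dot\bu^{\emptyword}/\bu^{\emptyword}$.

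For the equation, I would differentiate the identity $\bu_t = \shuexp{\bpsi_t}$ coefficientwise. This identity holds as a formal identity in $T((\C^2))$ by the algebraic relation between $\exp^{\shuprod}$ and $\log^{\shuprod}$. Each coefficient is again a finite expression, so the derivative is $\dot\bu_t = \shuexp{\bpsi_t}\shuprod\dot\bpsi_t$. On the other side, the shifts $\proj{i}$ are derivations, as used in the proof of Lemma~\ref{lem:enormous_bound_shift}. This gives
\[
\mathscr L\shuexp{\bpsi_t} = \shuexp{\bpsi_t}\shuprod\mathscr R\bpsi_t,
\]
which is the same identity already used in the proof of Theorem~\ref{theorem:existence_heat_eq}. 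Plugging both into $\dot\bu_t+\mathscr L\bu_t=0$ yields $\shuexp{\bpsi_t}\shuprod(\dot\bpsi_t+\mathscr R\bpsi_t)=0$. Since $\shuexp{\bpsi_t}$ has nonzero empty-word coefficient, it is invertible in the shuffle algebra $(T((\C^2)),\shuprod)$, with inverse $\shuexp{-\bpsi_t}$. Multiplying by this inverse gives the Riccati equation $\dot\bpsi_t+\mathscr R\bpsi_t=0$. The terminal condition follows from $\bu_T=\shuexp{\bp}$: we get $\bpsi_T=\bp+2\pi i k\emptyword$, and the branch normalisation $\bpsi_T^{\emptyword}=\bp^{\emptyword}$ forces $k=0$.

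The third point is the genuinely analytic step and the main obstacle. I would not differentiate the log series term by term. Instead I would use $\bpsi_t|_{\word i} = \bu_t|_{\word i}\shuprod\shuexp{-\bpsi_t}$, which follows from the derivation property. For $\shuexp{-\bpsi_t} = (\bu_t)^{\shuprod -1}$, I would write the geometric series
\[
(\bu_t^{\emptyword})^{-1}\sum_n(-\overline{\bu}_t/\bu_t^{\emptyword})\shupow{n}.
\]
By shuffle-compatibility and \eqref{eq:eps_cond_for_u}, its $\|\cdot\|_{\sigX[]}$ norm is at most $(|\bu_t^{\emptyword}|\varepsilon)^{-1}$ on $G_{\bp,r}$.

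For a general word $\word v$, I would iterate the derivation rule, as in Lemma~\ref{lem:enormous_bound_shift}. This expresses $\bpsi_t|_{\word v}$ as a shuffle polynomial of degree at most $|\word v|$ in the quantities $\bu_t|_{\word w}$ for $\word w\subset\word v$, and in the inverse $(\bu_t)^{\shuprod-1}$. Each factor $\bu_t|_{\word w}$ is bounded via \eqref{eq:bound_norm_shit_u} by $C(1+M_{\bp}(\sigX[]))^{|\word w|}\exp(C_{\bp,T}(1+M_{\bp})^{\deg_{\word 1}\bp})$. Each inverse factor is bounded by $1/(\varepsilon\inf|\bu^{\emptyword}|)$. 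Submultiplicativity then combines these bounds, and collecting constants and taking the $|\word v|$-th power gives the stated estimate. The delicate point is keeping track of the combinatorics so that every term carries at most $|\word v|$ factors of each type. I would handle this by induction on $|\word v|$, using
\[
\bpsi|_{\word{w i}} = (\bpsi|_{\word w})|_{\word i}
\]
together with the derivation rule applied to the product $\bu|_{\word w'}\shuprod(\bu)^{\shuprod -1}$. Here $(\bu^{\shuprod-1})|_{\word i} = -\bu|_{\word i}\shuprod(\bu^{\shuprod-1})\shupow{2}$.
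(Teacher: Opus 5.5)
Your proposal is correct and essentially reproduces the paper's proof. Points 1 and 2 are argued exactly as there. In point 3, your quotient-rule expansion $\bpsi_t|_{\word i}=\bu_t|_{\word i}\shuprod\bu_t\shupow{(-1)}$ gives the same Fa\`a di Bruno decomposition that the paper obtains by differentiating the log series term by term: the paper's inner series $\sum_{n\geq k}(-1)^{n-1}\frac{(n-1)!}{(n-k)!}(\overline{\bu}_t/\bu_t^{\emptyword})\shupow{(n-k)}$ is exactly $(-1)^{k-1}(k-1)!\,(\bu_t/\bu_t^{\emptyword})\shupow{(-k)}$. Your geometric bound $\|\bu_t\shupow{(-1)}\|_{\sigX[]}\leq(\varepsilon|\bu_t^{\emptyword}|)^{-1}$ then plays the role of the paper's summation to $(k-1)!\,\varepsilon^{-k}$.

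The only slip is notational. With the convention $\bp|_{\word w}=\sum_{\word v}\bp^{\word{vw}}\word v$, one has $(\bpsi|_{\word w})|_{\word i}=\bpsi|_{\word{iw}}$, not $\bpsi|_{\word{wi}}$; this is harmless for your induction on $|\word v|$.
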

\begin{proof}


We recall that for $\word v \neq \emptyword\,$,
$$
\bm \psi_t^{\word v} =  \sum_{n \geq 1}\,\frac{(-1)^{n-1}}{n}\,\left[\left( \frac{\overline{\bm u}_t}{\bm u_t^{\emptyword}}\right)\shupow{n}\right]^{\word v}\,, \quad t \in [0, T]\,,
$$
where the sum has a finite number of non-zero terms. Since $\bm u$ is $C^1$ on $[0\,,T]$ and $\bm u^{\emptyword}$ is assumed not to hit zero, we deduce that $t \mapsto \bm \psi_t^{\word v}$ is also $C^1$. In the case $\word v = \emptyword\,$, $\bm \psi^{\emptyword}$ is defined as the unique continuous function such that $\bm u^{\emptyword} = \exp (\bm \psi^{\emptyword})$ and $\bm \psi^{\emptyword}_T = \bm p^{\emptyword}\,$. Since $\bm u^{\emptyword}$ is $C^1\,$, never hits zero, and $\bu^{\emptyword}_T = \exp(\bp^{\emptyword})\,$, the function $\bp^{\emptyword} - \int_{\cdot}^T \frac{\dot{\bm u}_s^{\emptyword}}{\bm u_s^{\emptyword}} ds$ is {a continuous complex logarithm of $\bu_t^{\emptyword}$} with terminal condition $\bp^{\emptyword}\,$, and therefore coincides with $\bpsi^{\emptyword}$, which shows that it is continuously differentiable on $[0\,, T]$. This proves the first point.

We now prove that $\bm \psi$ satisfies \eqref{eq:Riccati_BM}.
    Using the fact that $\mathscr L \shuexp{\bm q} = \shuexp{\bm q}\shuprod \mathscr R \bm q\,$, {the relationship $\bu_t = \shuexp{\bpsi_t}$}, and the differential equation satisfied by $\bm u$, we obtain
    \begin{equation}
\label{eq:algebraic_deriv_psi_proof_riccati}
    \shuexp{\bm \psi_t}\shuprod\dot{\bm \psi}_t = \dot{\bm u}_t = -\mathscr{L}\bm u_t = -\shuexp{\bm \psi_t}\shuprod \mathscr R \bm \psi_t\,, \quad t \in [0, T]\,.
    \end{equation}

{Multiplying \eqref{eq:algebraic_deriv_psi_proof_riccati} by $\shuexp{-\bpsi_t}$, yields the Riccati equation \eqref{eq:Riccati_BM}. The terminal condition $\bpsi_T = \bp$ follows from $\bu_T = \shuexp{\bp}$ and the choice of the complex logarithm ensuring $\bpsi_T^{\emptyword} = \bp^\emptyword$, recall Section~\ref{subsec:linear_fct}.
}
{We now move to the proof of the third point of the theorem. For $\word v \neq \emptyword\,$, we have 
$$
   \bm \psi_t|_{\word v} = \sum_{n \geq 1}\,\frac{(-1)^{n-1}}{n}\,\left.\left(\frac{\overline{\bm u}_t}{\bm u_t^{\emptyword}} \right)\shupow{n} \right|_{\word v}\,, \quad t \in [0, T]\,,
   $$
   since $\emptyword|_{\word v} = 0$. Applying iteratively 
   $(\bell \shuprod \bell')|_{\word i} = \bell \shuprod \bell'|_{\word i} + \bell|_{\word i} \shuprod \bell'$ and {$(\bell\shupow{n})\proj{i} = n \bell\shupow{(n-1)}\shuprod\bell\proj{i}$}, for $\word i \in \{\word 0, \word 1\},$ one easily derives that 
   $$
   \bm \psi_t|_{\word v} = \sum_{k = 1}^{|\word v|} \left(\sum_{n \geq k} (-1)^{n-1}\frac{(n-1)!}{(n-k)!}\,\left(\frac{\overline{\bm u}_t}{\bm u_t^{\emptyword}} \right)\shupow{(n - k)} \right)\shuprod P_k(t)\,, \quad t \in [0\,,T]\,,
   $$
   where for each $1 \leq k \leq |\wv|\,$, $P_k(t)$ is a finite sum of terms of the form 
   $$
   \left.\frac{\overline{\bm u_t}}{\bm u^{\emptyword}_t}\right|_{\word{w_1}} \shuprod \ldots \shuprod\left.\frac{\overline{\bm u_t}}{\bm u^{\emptyword}_t}\right|_{\word{w_k}}\,,
   $$
   with $|\word{w_1}| + \ldots + |\word{w_k}| = |\word v|\,$. Hence, using \eqref{eq:bound_norm_shit_u} to bound 
   $$
\left\|\left.\frac{\overline{\bu_t}}{\bu^{\emptyword}_t} \right|_{\word w}\right\|_{\sigX[]} \leq \frac{1}{|\bm u^{\emptyword}_t|}\|\bu_t|_{\word w}\|_{\sigX[]} \leq \frac{C_{\bp, T, \word w}}{|\bu_t^{\emptyword}|} (1 + M_{\bp}(\sigX[])) \exp\left(C_{\bp, T}(1 + M_{\bp}(\sigX[]))^{\deg_{\word 1}(\bp)}\right)\,, \quad t \in [0\,,T]\,, \quad \sigX[] \in G\,,
   $$
   we obtain 
   $$
   \|P_k(t)\|_{\sigX[]} \leq \frac{C'_{\bp, T, \word v}}{|\bm u^{\emptyword}_t|^k} (1 + M_{\bp}(\sigX[]))^{|\word v|} \exp\left(|\word v| C_{\bp, T}(1 + M_{\bp}(\sigX[]))^{\deg_{\word 1}(\bp)}\right)\,, \quad t \in [0\,, T]\,, \quad 1 \leq k \leq |\word v|\,, \quad \sigX[] \in G\,.
   $$
   for some $C'_{\bp, T, \wv} \geq 0\,$. Finally, for any $\sigX[] \in G_{\bp, r}\,$, we have $\|\bu_t\|_{\sigX[]} \leq (2 - \varepsilon) |\bu_t^{\emptyword}|$ for all $t \in [0\,, T]$ (see Theorem \ref{theorem:log_analyticity_general}), so that $\| \overline{\bu_t} / \bm u^{\emptyword}_t\|_{\sigX[]} \leq 1 - \varepsilon\,$. This yields 
   \begin{align*}
       \|\bpsi_t|_{\word v}\|_{\sigX[]} &\leq \sum_{k = 1}^{|\word v|} \|P_k(t)\|_{\sigX[]} \sum_{n \geq k} \frac{(n-1)!}{(n-k)!} \left\|\frac{\overline{\bu_t}}{\bu_t^{\emptyword}} \right\|^{n-k} \\
       &= \sum_{k = 1}^{|\word v|} \|P_k(t)\|_{\sigX[]} \frac{(k-1)!}{\left(1 - \left\|\frac{\overline{\bu_t}}{\bu_t^{\emptyword}} \right\|\right)^k} \\
       &\leq \sum_{k = 1}^{|\wv|}\|P_k(t)\|_{\sigX[]} (k-1)! \varepsilon^{-k}\,, \quad t \in [0\,,T]\,, \quad \sigX[] \in G_{\bp, r}\,.
   \end{align*}
   Combining this with our bound for $\|P_k(t)\|_{\sigX[]}\,$, we obtain,
   \begin{align*}
       \sup_{t \in [0,T]}\|\bpsi_t|_{\word v}\|_{\sigX[]} &\leq C'_{\bp, T, \wv} \left(\sum_{k = 1}^{\word |\wv|} \frac{(k-1)!}{\varepsilon^k \inf_{t \in [0,T]} |\bu_t^{\emptyword}|^k} \right) (1 + M_{\bp}(\sigX[]))^{|\wv|} \exp\left(|\wv| C_{\bp, T} (1 + M_{\bp}(\sigX[]))^{\deg_{\word 1}(\bp)} \right) \\
       &\leq C'_{\bp, T, \wv} \left(\left(1 + \frac{1}{\varepsilon \inf_{t \in [0,T]} |\bu_t^{\emptyword}|}\right)(1 + M_{\bp}(\sigX[])) \exp\left( C_{\bp, T} (1 + M_{\bp}(\sigX[]))^{\deg_{\word 1}(\bp)} \right)\right)^{|\wv|}, \quad \sigX[] \in G_{\bp, r}\,.
   \end{align*}
   for a possibly new constant $C'_{\bp, T, \wv} \geq 0$ in the second inequality, that we do not relabel. This finishes the proof of Theorem \ref{theorem:existence_riccati}.}
\end{proof}


\begin{corollary}[Change of variables in the Riccati equation]\label{corollary_riccati_with_source}
    Consider a Riccati equation 
    \begin{equation}\label{eq:Riccati_BM_with_source}
    \begin{cases}
        \bm \dot{\bpsi}_t + \bpsi_t|_{\word 0} + \frac{1}{2}\, \bm \bpsi_t|_{\word{11}} + \frac{1}{2}\, \bm \bpsi_t|_{\word 1}\shupow{2} + \bpsi_t|_{\word 1}\shuprod\bgamma + \bm f = 0,\quad t \in [0, T]\\
        \bm \bpsi_T = \bp,
    \end{cases}
    \end{equation}
    where $\bp, \bgamma, \bm f \in \TA[2]$. If the coefficient 
    $$
    \widetilde\bp := \bp + \left(\bm f - \dfrac{1}{2}\bgamma\proj{1} - \dfrac12\bgamma\shupow{2}\right)\word{0} + \gamma\word{1} \in \mathcal{B},
    $$
    then there exists a solution $\bpsi$ and $r > 0$ such that
    \begin{enumerate}
        \item For any $\word v \in V\,$, the function $t \mapsto \bm \psi_t^{\word v}$ is $C^1$ on $[0\,, T]\,$.
        \item For any $\word v \in V\,$, $\bm \psi_t|_{\word v} \in \mathcal{A}_{\widetilde\bp, r}$.
    \end{enumerate}
\end{corollary}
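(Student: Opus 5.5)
The plan is a shift change of variables that absorbs the source terms $\bpsi_t|_{\word 1}\shuprod\bgamma$ and $\bm f$ into a modified terminal condition. This reduces \eqref{eq:Riccati_BM_with_source} to the standard signature Riccati equation \eqref{eq:Riccati_BM}, to which Theorems~\ref{theorem:log_analyticity_general} and~\ref{theorem:existence_riccati} apply. Concretely, set
\[
\bm\eta := \Bigl(\bm f - \tfrac12 \bgamma|_{\word 1} - \tfrac12 \bgamma\shupow{2}\Bigr)\word 0 + \bgamma\word 1 \in \TA[2],
\]
so that $\widetilde\bp = \bp + \bm\eta$. I will look for $\bpsi$ of the form $\bpsi_t = \bm\phi_t - \bm\eta$, where $\bm\phi$ solves $\dot{\bm\phi}_t + \mathscr R\bm\phi_t = 0$ with $\bm\phi_T = \widetilde\bp$.

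The first step is a purely algebraic check that this substitution is exact. Since $\bm\eta$ is constant in time, $\dot{\bpsi}_t = \dot{\bm\phi}_t$. The shifts of $\bm\eta$ are computed as follows. A word ending in $\word 1$ has zero right shift by $\word 0$, and a word ending in $\word 0$ has zero right shift by $\word 1$. Hence
\[
\bm\eta|_{\word 0} = \bm f - \tfrac12\bgamma|_{\word 1} - \tfrac12\bgamma\shupow{2},\qquad \bm\eta|_{\word 1} = \bgamma,\qquad \bm\eta|_{\word{11}} = \bgamma|_{\word 1}.
\]
Expanding $\mathscr R(\bpsi_t+\bm\eta)$ and using bilinearity and commutativity of $\shuprod$, so that $\frac12(\bpsi_t|_{\word 1}+\bgamma)\shupow{2} = \frac12(\bpsi_t|_{\word 1})\shupow{2} + \bpsi_t|_{\word 1}\shuprod\bgamma + \frac12\bgamma\shupow{2}$, the terms $\pm\frac12\bgamma|_{\word 1}$ and $\pm\frac12\bgamma\shupow{2}$ cancel. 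What remains is
\[
\mathscr R(\bpsi_t+\bm\eta) = \bpsi_t|_{\word 0} + \tfrac12\bpsi_t|_{\word{11}} + \tfrac12(\bpsi_t|_{\word 1})\shupow{2} + \bpsi_t|_{\word 1}\shuprod\bgamma + \bm f.
\]
Therefore $\bpsi$ solves \eqref{eq:Riccati_BM_with_source} with $\bpsi_T=\bp$ if and only if $\bm\phi=\bpsi+\bm\eta$ solves \eqref{eq:Riccati_BM} with $\bm\phi_T=\widetilde\bp$.

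The second step is to produce $\bm\phi$. Since $\widetilde\bp\in\mathcal B$, Theorem~\ref{theorem:analyticity_general} yields $\bu$, and Theorem~\ref{theorem:log_analyticity_general} yields $\bm\phi := \log^{\shuprod}\bu$ together with a radius $r=r(\widetilde\bp,T,\varepsilon)>0$ and $\bm\phi_t\in\mathcal A_{\widetilde\bp,r}$. Theorem~\ref{theorem:existence_riccati} then gives that each $t\mapsto\bm\phi^{\word v}_t$ is $C^1$, that $\bm\phi$ solves \eqref{eq:Riccati_BM}, and that $\sup_t\|\bm\phi_t|_{\word v}\|_{\sigX[]}<\infty$ for all $\sigX[]\in G_{\widetilde\bp,r}$ and $\word v\neq\emptyword$.

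The conclusions then transfer to $\bpsi=\bm\phi-\bm\eta$. Regularity holds because $\bm\eta$ is time-independent. For the integrability, $\bpsi_t|_{\word v} = \bm\phi_t|_{\word v} - \bm\eta|_{\word v}$, where $\bm\eta|_{\word v}\in\TA[2]$ has finite degree and hence lies in $\mathcal A_\infty$. The case $\word v = \emptyword$ is covered by $\bm\phi_t\in\mathcal A_{\widetilde\bp,r}$.

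The main obstacle is that Theorems~\ref{theorem:log_analyticity_general} and~\ref{theorem:existence_riccati} require $\E[\exp(\bracket{\widetilde\bp}{\sigW[t,T]})]\neq 0$ for all $t\in[0,T]$, and the corollary states no such hypothesis. The expected resolution is to note that this is automatic when $\widetilde\bp$ has real coefficients, since the expectation is then positive, and also holds when $T$ is small, by continuity of $t \mapsto \bu_t^{\emptyword}$ from Theorem~\ref{theorem:analyticity_general} together with $\bu_T^{\emptyword} = \exp(\widetilde\bp^{\emptyword}) \neq 0$. Otherwise I would state it explicitly as an implicit standing assumption inherited from Theorem~\ref{theorem:existence_riccati}. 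No other step should require new estimates.
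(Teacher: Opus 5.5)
Your proposal is correct and is essentially the paper's proof: the same constant shift $\bm\phi_t=\bpsi_t+\bm\eta$ reduces \eqref{eq:Riccati_BM_with_source} to the standard Riccati equation with terminal condition $\widetilde\bp$, and Theorems~\ref{theorem:log_analyticity_general} and~\ref{theorem:existence_riccati} then apply. Your remark that $\bm\eta|_{\word v}$ has finite degree also makes explicit a step the paper leaves implicit. The non-vanishing issue you flag needs neither an extra assumption nor a small-$T$ argument: the hypothesis $\bp,\bgamma,\bm f\in\TA[2]$ means real coefficients, so $\widetilde\bp$ is real and $\E[\exp(\bracket{\widetilde\bp}{\sigW[t,T]})]>0$, which is exactly how the paper concludes.
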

\begin{proof}
    Denote $\bq = \left(\bm f - \dfrac{1}{2}\bgamma\proj{1} - \dfrac12\bgamma\shupow{2}\right)\word{0} + \gamma\word{1}$ and consider a change of variables
    $$
    \bchi_t = \bpsi_t + \bq, \quad t \in [0, T].
    $$
    It is straightforward to verify that $\bchi_t$ solves the Riccati equation \eqref{eq:Riccati_coefs} with terminal condition $\bchi_T = \tilde \bp$. The result now follows from Theorem~\ref{theorem:existence_riccati}, using the fact that $\tilde \bp$ has real-valued coefficients so that $\E[\exp(\bracket{\tilde \bp}{\sigW[\cdot, T]})]$ does not vanish on $[0\,,T]\,$.
\end{proof}

\begin{sqremark}
    In the Markovian framework presented in Remark~\ref{rmk:markovian_heat}, the solution
    to the Riccati equation~\eqref{eq:Riccati_BM} can be found in the form
    $\bpsi_t = \bchi_t + \br\word{0}$, where $\deg_{\word{0}}(\bchi_t) = 0$ and
    $\bchi_t = \sum_{n\geq 0}\bchi_t^n\word{1}^{\otimes n}$ solves
    \begin{equation*}
    \begin{cases}
        \dot{\bchi}_t + \frac{1}{2}\bchi_t\proj{11} + \frac{1}{2}(\bchi_t\proj{1})^{*2}
        + \br = 0, \quad t \in [0, T], \\
        \bchi_T = \bq,
    \end{cases}
    \end{equation*}
    which recovers~\eqref{eq:riccati_coef_power_ser} from
    Section~\ref{subsec:power_series_psi} when $\br = 0$. The expansion of the Fourier--Laplace transform
    \begin{align*}
        \E\!\left[\exp\!\left(q(W_T) + \int_0^T r(W_s)\,ds\right) \,\bigg|\, \F_t\right]
        = \exp\!\left(\int_0^t r(W_s)\,ds + \sum_{n \geq 0}\bchi_t^n
        \frac{W_t^n}{n!}\right), \quad t \in \llbracket 0, \tau\rrbracket,
    \end{align*}
    holds up to the stopping time $\tau = \inf\{t \geq 0\colon |W_t| = r\}$ for some $r > 0$.
\end{sqremark}

\begin{sqremark}
The Riccati equation~\eqref{eq:Riccati_BM} is, in general, infinite-dimensional  but it reduces to a finite-dimensional system for certain coefficients satisfying $\deg_{\word{1}}(\bp) \le 2$. Indeed, writing \eqref{eq:Riccati_BM} elementwise for each $\word{v} \in V$,
we obtain
\begin{equation}\label{eq:Riccati_elementwise}
    \dot\bpsi_t^{\word{v}} + \bpsi_t^{\word{v0}} + \frac{1}{2}\bpsi_t^{\word{v11}}
    + \frac{1}{2}\sum_{\word{u}, \word{w}}\bpsi_t^{\word{u}}\bpsi_t^{\word{w}}
    \bracket{\word{v}}{\word{u}\proj{1}\shuprod\word{w}\proj{1}} = 0,
    \quad \bpsi_T^{\word{v}} = \bp^{\word{v}}.
\end{equation}
That is, the coefficient corresponding to the word $\word{v}$ depends on those
corresponding to $\word{v0}$, $\word{v11}$, and all the pairs $(\word{u}, \word{w})$
such that $\word{v}$ appears in the shuffle product
$\word{u}\proj{1}\shuprod\word{w}\proj{1}$. This implies that if
$\deg_{\word{1}}(\bp) \leq 2$, then $\deg_{\word{1}}(\bpsi_t) \leq 2$ for all
$t \in [0, T]$, and that $\deg_{\word{1}}(\bpsi_t) = \infty$ whenever
$\deg_{\word{1}}(\bp) > 2$. Hence, $\deg_{\word{1}}(\bp) \leq 2$ is a necessary
condition for the Riccati equation to be finite-dimensional. For instance, in the linear-quadratic case
$$
\bp = \bp^{\emptyword}\word{\emptyword} + \bp^{\word{1}}\word{1}
+ \bp^{\word{11}}\word{11} + \bp^{\word{10}}\word{10}
+ \bp^{\word{110}}\word{110},
\quad \Re(\bp^{\word{11}}) < 0, \ \Re(\bp^{\word{110}}) < 0,
$$
one can show using~\eqref{eq:Riccati_elementwise} that the Riccati equation
reduces to the following finite-dimensional system:
\begin{equation*}
    -\dot\bpsi^{\emptyword}_t = \frac{1}{2}\bpsi^{\word{11}}_t
    + \frac{1}{2}(\bpsi_t^{\word{1}})^2, \quad
    -\dot\bpsi^{\word{1}}_t = \bpsi^{\word{10}}_t
    + \bpsi^{\word{1}}_t\bpsi^{\word{11}}_t, \quad
    -\dot\bpsi^{\word{11}}_t = \bpsi^{\word{110}}_t
    + (\bpsi_t^{\word{11}})^2, \quad
    -\dot\bpsi_t^{\word{10}} = 0, \quad
    -\dot\bpsi_t^{\word{110}} = 0,
\end{equation*}
and that the other components of $\bpsi$ are zero. This yields that $\bracket{\bpsi_t}{\sigW[t]}$ is a finite sum of terms involving $(1, W_t, W_t^2, \int_0^t W_sds, \int_0^t W_s^2 ds)$. 
The expansion \eqref{eq:logcharsigexpansion} is hence global, and we recover the standard Fourier--Laplace transform representation for the affine process $(W_t, W_t^2)$.
However, the condition $\deg_{\word{1}}(\bp) \leq 2$ is not sufficient for $\deg(\bpsi_t)$ to be finite: it is easy
to verify that if $\bp^{\word{011}} \neq 0$, the solution $\bpsi_t$ has infinitely many
nonzero terms.
\end{sqremark}

Regarding uniqueness of solutions to \eqref{eq:Riccati_coefs}, it is more complicated to define the admissible solutions since the series $\bracket{\bpsi_t}{\sigW[t]}$ converges
only locally, while the localization proof in Theorem~\ref{thm:uniqueness_heat_eq}
clearly requires global convergence. Moreover, the non-linear character of the equation does not allow to easily handle the difference of two solutions. Thus, we define the solution
of~\eqref{eq:Riccati_BM} using the Cole--Hopf transform and the notions of admissibility from Section \ref{subsec:uniqueness_heat}.
\begin{definition}
    A solution $(\bpsi_t)_{t \in [0, T]}$ to the Riccati equation~\eqref{eq:Riccati_BM}
    is called admissible if $(\shuexp{\bpsi_t})_{t \in [0, T]}$ is an
    Itô-admissible solution to the linear equation~\eqref{eq:heat_eq_coef_sig}, in the sense of Definition \ref{def:admissible solution}, that satisfies the sub-Gaussian growth condition \eqref{eq:sub_gaussian_growth}.
\end{definition}
\begin{proposition}\label{prop:riccati_uniqueness}
    The solution $(\bpsi_t)_{t \in [0, T]}$ constructed in
Theorem~\ref{theorem:existence_riccati} is the unique admissible solution
    to~\eqref{eq:Riccati_BM}.
\end{proposition}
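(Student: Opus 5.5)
The plan is to prove two things: first, that the solution of Theorem~\ref{theorem:existence_riccati} is admissible; second, that any admissible solution coincides with it. The tool for the second part is the Cole--Hopf correspondence $\bu = \shuexp{\bpsi}$ combined with Corollary~\ref{cor:uniqueness_heat}.

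\textbf{Admissibility of the constructed solution.} By construction $\bpsi_t = \log^{\shuprod}(\bu_t)$, where $\bu$ comes from Theorem~\ref{theorem:analyticity_general}. The identity $\shuexp{\log^{\shuprod}(\bell)} = \bell$ holds whenever $\bell^{\emptyword} \neq 0$, so $\shuexp{\bpsi_t} = \bu_t$ for every $t$. By Theorem~\ref{theorem:existence_heat_eq}, $\bu$ solves \eqref{eq:heat_eq_from_thm}. The discussion after Definition~\ref{def:admissible solution} shows that $\bu$ is It\^o-admissible, using \eqref{eq:bound_norm_shit_u} together with Lemma~\ref{lem:sig_bound}. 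The proof of Corollary~\ref{cor:uniqueness_heat} shows that $\bu$ satisfies the sub-Gaussian growth condition \eqref{eq:sub_gaussian_growth}. Hence $\bpsi$ is admissible.

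\textbf{Uniqueness.} Let $\tilde\bpsi$ be another admissible solution of \eqref{eq:Riccati_BM}, and set $\tilde\bu_t := \shuexp{\tilde\bpsi_t}$.
\begin{enumerate}
\item Each coordinate of $\tilde\bu_t$ is a finite polynomial in the coordinates of $\tilde\bpsi_t$ (times $\exp(\tilde\bpsi_t^{\emptyword})$), so $t\mapsto\tilde\bu_t^{\word v}$ is $C^1$.
\item The derivation property gives $\frac{d}{dt}\shuexp{\tilde\bpsi_t} = \shuexp{\tilde\bpsi_t}\shuprod\dot{\tilde\bpsi}_t$. The identity $\mathscr L\shuexp{\bm q} = \shuexp{\bm q}\shuprod\mathscr R\bm q$, used in the proof of Theorem~\ref{theorem:existence_heat_eq}, then gives $\dot{\tilde\bu}_t + \mathscr L\tilde\bu_t = \shuexp{\tilde\bpsi_t}\shuprod(\dot{\tilde\bpsi}_t + \mathscr R\tilde\bpsi_t) = 0$. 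These are coordinatewise finite algebraic identities, so no convergence issue arises.
\item The terminal condition is $\tilde\bu_T = \shuexp{\bp}$.
\end{enumerate}
Thus $\tilde\bu$ is an It\^o-admissible solution of \eqref{eq:heat_eq_from_thm} satisfying \eqref{eq:sub_gaussian_growth}. Corollary~\ref{cor:uniqueness_heat} (that is, Theorem~\ref{thm:uniqueness_heat_eq} applied to $\tilde\bu - \bu$, which is again It\^o-admissible with growth bounded by the sum of the two growth functions) yields $\tilde\bu_t = \bu_t$ for all $t$.

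\textbf{Recovering $\tilde\bpsi$ from $\tilde\bu$.} Since $\shuexp{\tilde\bpsi_t} = \shuexp{\bpsi_t}$, we have $\exp(\tilde\bpsi_t^{\emptyword}) = \bu_t^{\emptyword} \neq 0$. Then $\overline{\tilde\bpsi_t} = \log^{\shuprod}(\emptyword + \overline{\bu}_t/\bu_t^{\emptyword}) = \overline{\bpsi_t}$, because $\log^{\shuprod}$ and $\exp^{\shuprod}$ are mutually inverse on elements with vanishing constant term. So $\tilde\bpsi_t$ and $\bpsi_t$ differ only in the empty word, by $2\pi i k(t)$ with $k(t)\in\mathbb Z$. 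Both $\tilde\bpsi^{\emptyword}$ and $\bpsi^{\emptyword}$ are continuous (indeed $C^1$ as solutions), so $k$ is constant. Both equal $\bp^{\emptyword}$ at $T$, so $k\equiv0$.

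\textbf{Main obstacle.} The delicate point is making sure that the difference of two admissible linear solutions still falls under Theorem~\ref{thm:uniqueness_heat_eq}, that is, that the It\^o-admissibility bounds and the growth condition are stable under subtraction. Both are stable, since the relevant norms are subadditive and sums of locally bounded functions (respectively $o(e^{cx^2})$ functions) remain in the same class. A second point to check is that the argument in the proof of Corollary~\ref{cor:uniqueness_heat}, which uses an independent Brownian motion $B$, is consistent with how \eqref{eq:sub_gaussian_growth} is imposed on $\tilde\bu$. I would state explicitly that admissibility is understood with respect to any Brownian motion, since the conditions depend only on the law.
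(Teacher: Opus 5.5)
Your proposal is correct and follows the same route as the paper. It establishes admissibility of the constructed $\bpsi$ via $\shuexp{\bpsi_t}=\bu_t$, identifies $\shuexp{\tilde{\bpsi}_t}=\bu_t$ through Corollary~\ref{cor:uniqueness_heat}, and removes the $2\pi i k\,\emptyword$ ambiguity using continuity of the constant term and the terminal condition $\bpsi_T=\bp$.

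Your extra checks, namely that $\shuexp{\tilde{\bpsi}}$ solves the linear equation with terminal value $\shuexp{\bp}$ and that admissibility and the growth bound survive taking differences, are details the paper leaves implicit. The first of these is in fact already built into the definition of an admissible Riccati solution.
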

\begin{proof}
    We have already shown that $\bu_t = \shuexp{\bpsi_t}$ is the unique
    Itô-admissible solution of~\eqref{eq:heat_eq_coef_sig}, satisfying \eqref{eq:sub_gaussian_growth}, so that
    $(\bpsi_t)_{t \in [0, T]}$ is an admissible solution of~\eqref{eq:Riccati_BM}.
    If $\bpsi'$ is another admissible solution, then, by
    Corollary~\ref{cor:uniqueness_heat}, $\shuexp{\bpsi_t} =
    \shuexp{\bpsi_t'}$ for $t \in [0, T]$. Hence, $\bpsi_t$ and $\bpsi'_t$ may
    differ only by $2\pi k_t i\, \emptyword$ for some $k_t \in \mathbb{Z}$, see Section~\ref{subsec:linear_fct}. However, the
    terminal condition and the continuity requirement for the solution imply
    $\bpsi = \bpsi'$.
\end{proof}

\subsection{The recentering technique}\label{section:recentering}

In this subsection, we tackle the locality of the log-Fourier--Laplace expansion in Theorem \ref{theorem:log_analyticity_general} and provide a numerical illustration for the counterexample introduced in Subsection~\ref{sect:global_exp_counterexample}.

Recall that we showed that the complex function $z \mapsto u(t, z) = \E[\exp(-(z + W_{t, T})^4)]$ has infinitely many zeros on the imaginary axis. These zeros can also be observed numerically, as well as their impact on the computation of the log-Fourier--Laplace transform using $\bm \psi_t\,$. Indeed, $u(t,ix)$ can be efficiently computed via \eqref{eq:I_for_MC} using a Monte Carlo method or approximating the expectations with quadratures. Figure~\ref{fig:zeros_u} provides evidence that $u(0, ix)$, with $T = 0.25$, vanishes on $\C$ with the first zero corresponding to $r \approx 1.5$. It also illustrates the asymptotic approximation given by Theorem~\ref{theorem:zeros_quartic_exp_main}.

Moreover, we provide further evidence of a finite radius of convergence by numerically solving the Riccati equation \eqref{eq:Riccati_BM}, truncated at order $M_{\max}=140$, and computing the coefficients \(\widehat{\bpsi}_0\) using the scheme introduced by \citet*[Section~4]{abijaber2024fourier}. Figure~\ref{fig:psi_series} shows the truncated series 
$\widehat\psi(0, x) =\sum_{k=1}^M \widehat\bpsi_{0}^k x^k / k!$ for different values of $M$ between $5$ and $25$. We clearly see that the convergence radius of the function $\widehat\psi(0, \cdot)$ corresponds precisely to the first zero $r$ introduced above. This highlights a fundamental limitation of the approach based on the log-Fourier--Laplace transform expansion via the Riccati equation: the expansion
$$
\log \mathbb E\!\left[
\exp\!\big(\langle \bm \bp,\sigW[T]\rangle\big)
\Big|\sigW[t]
\right]
=
\langle \bm \psi_t,\sigW[t]\rangle,
$$
holds only up to a positive stopping time, which can be arbitrarily small, and this result cannot be improved.

\begin{figure}[H]
    \centering
    \begin{subfigure}{0.48\textwidth}
        \centering
        \includegraphics[width=\linewidth]{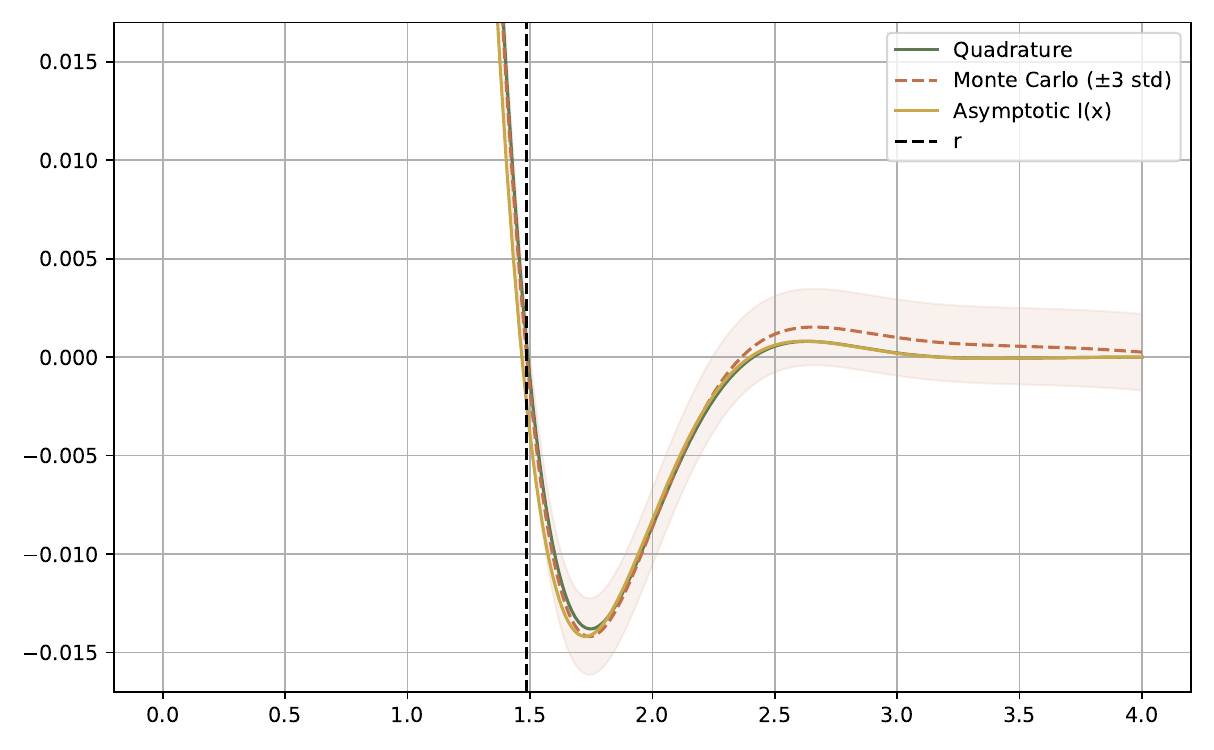}
        \caption{The function $x \mapsto u(0, ix)$ for $T = 0.25$.}
        \label{fig:zeros_u}
    \end{subfigure}
    \hfill
    \begin{subfigure}{0.48\textwidth}
        \centering
        \includegraphics[width=\linewidth]{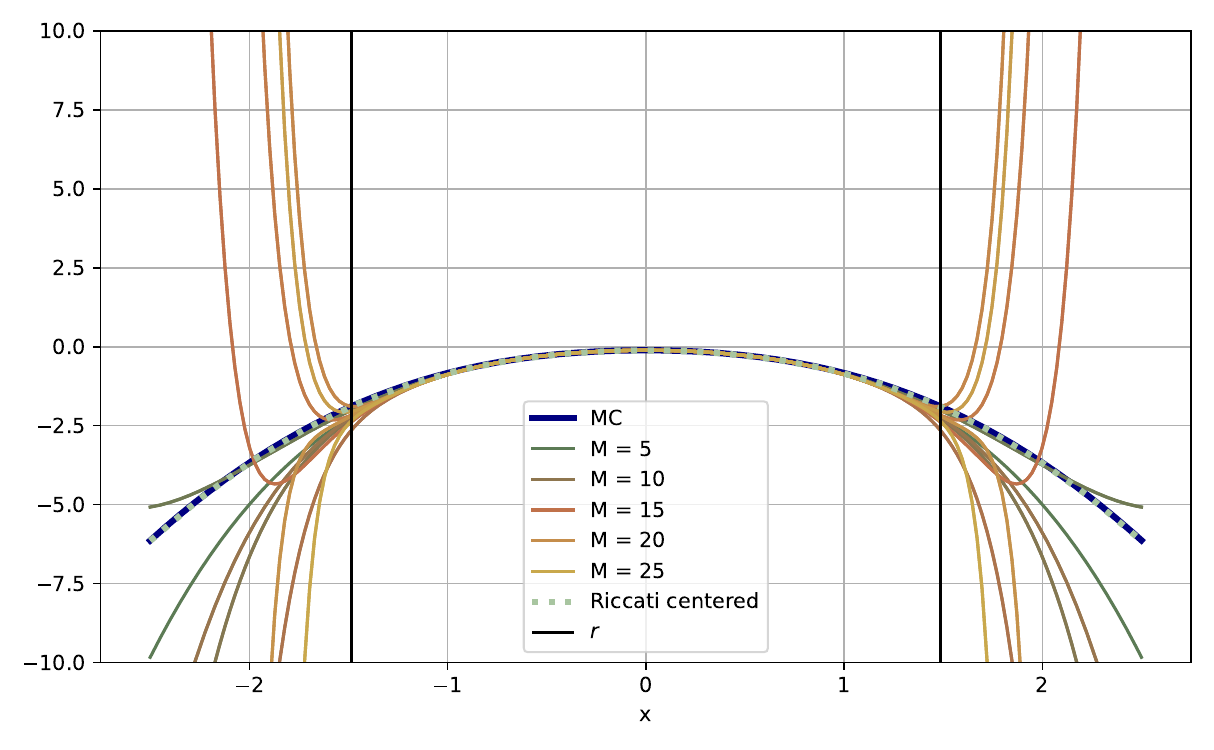}
        \caption{Convergence of the power series expansion of $\widehat\psi(0, \cdot)$ .}
        \label{fig:psi_series}
    \end{subfigure}
    \caption{Numerical illustrations of the solution $u$ and the function $\psi$. Left: zeros of $u(0, \cdot)$ along the imaginary axis. Right: power series approximations of $\widehat\psi(0, x)$ for truncation orders ranging from $5$ to $25$. The dark blue line is the Monte Carlo estimate using $10^6$ samples; the dotted green line corresponds to the expansions obtained via the recentered Riccati equation \eqref{eq:Riccati_recentered}. The horizontal bars correspond to the first zero $r$ of the function $x \mapsto u(0, ix)$.}
    \label{fig:numerical_illustrations}
\end{figure}

To overcome this difficulty and recover a global representation, we propose to consider the expansions centered at $\sigW$ rather than around $\emptyword$ as considered before, making use of Chen's identity \eqref{eq:Chen}, the property of shifts given by Proposition~\ref{prop:projections_ppties}, and Lemma~\ref{lemma:recentering_property_class_B} specific to the class of coefficients $\bp \in \mathcal B\,$.

The key idea is to rewrite the sum $\bracket{\bm p}{\sigX[] \otimes \sigW[t,T]}$ as $\bracket{{}_{\sigX[]}|\bm p}{\sigW[t,T]}$ using Proposition \ref{prop:projections_ppties}. Since $\emptyword \in G$ and $M_{\bm p}(\emptyword) = 0\,$, we can apply Theorem \ref{theorem:existence_riccati} to obtain 
$$
\E\left[\exp (\bracket{\bm p}{\sigX[] \otimes \sigW[t,T]}) \right] = \E\left[\exp (\bracket{\!~_{\sigX[]}|\bm p }{\sigW[t,T]})\right] = \exp  (\bracket{\bm \psi_t^{\sigX[]}}{\emptyword})\,, \quad t \in [0, T]\,,
$$
where ${\bm \psi}^{\sigX[]}_t$ solves the Riccati equation with shifted terminal condition:
\begin{equation}\label{eq:Riccati_recentered}
\begin{cases}
    \dot{\bm \psi}^{\sigX[]}_t + \mathscr{R}\bpsi^{\sigX[]}_t = 0\,, \quad t \in [0\,,T]\\
    \bm \psi_T^{\sigX[]} = ~_{\sigX[]}|\bp
\end{cases}
\end{equation}
which is well defined for any $\sigX[] \in G\,$, since $~_{\sigX[]}|\bp \in \mathcal{B}$ by Lemma~\ref{lemma:recentering_property_class_B}. This is summarized in the following theorem.

\begin{theorem}\label{theorem:recentering}
    Let $\bp \in \mathcal{B}$ and $\mathbb{X} \in G\,$. Assume that $\E[\exp(\bracket{\bp}{\sigX[] \otimes \sigW[t,T]})] \neq 0$ for all $t \in [0\,,T]\,$. Then
    $$
    \E\left[\exp (\bracket{\bm p}{\sigX[] \otimes \sigW[t,T]}) \right] =  \exp  (\bracket{\bm \psi_t^{\sigX[]}}{\emptyword})\,, \quad t \in [0, T]\,,
    $$
    where $\bpsi_t^{\sigX[]}$ is a solution to the recentered Riccati equation \eqref{eq:Riccati_recentered}. In particular, if $\E[\exp(\bracket{\bp}{\sigW[T]}) \,|\, \F_t] \neq 0\,$ a.s., for all $t \in [0, T]$, we have
    $$
\E\left[\exp\left(\bracket{\bm p}{\sigW[T]}\right)\,\Big\vert \,\mathcal F_t\right] = \exp\left(\bracket{\bm \psi_t^{\sigW[t]}}{\emptyword}\right) \,, \quad t \in [0, T]\,.
    $$
\end{theorem}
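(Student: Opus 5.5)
The argument reduces the statement to Theorems~\ref{theorem:analyticity_general}, \ref{theorem:log_analyticity_general} and \ref{theorem:existence_riccati}, applied with the recentered coefficient $\tilde{\bp} := {}_{\sigX[]}|\bp$ in place of $\bp$. There are three steps: rewrite the transform in terms of $\tilde\bp$, apply the local theory at the origin $\emptyword$ (where the radius of convergence is never an issue), and substitute $\sigX[]=\sigW[t]$ using the independence of Brownian increments.

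\textbf{Step 1: the recentered coefficient.} Fix $\sigX[]\in G$. Since $\bp\in T(\C^2)$ has finite degree, $\tilde\bp$ is a finite sum: its coefficients $\tilde\bp^{\word v}=\bracket{\bp|_{\word v}}{\sigX[]}$ vanish for $|\word v|>\deg(\bp)$. The first part of Proposition~\ref{prop:projections_ppties} then gives the pathwise identity
$$
\bracket{\bp}{\sigX[]\otimes\sigW[t,T]}=\bracket{\tilde\bp}{\sigW[t,T]}, \qquad t\in[0,T].
$$
By Lemma~\ref{lemma:recentering_property_class_B}, $\tilde\bp\in\mathcal B$. The nonvanishing hypothesis reads $\E[\exp(\bracket{\tilde\bp}{\sigW[t,T]})]\neq 0$ for all $t\in[0,T]$.

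\textbf{Step 2: the local theory at the origin.} Apply Theorems~\ref{theorem:analyticity_general}, \ref{theorem:log_analyticity_general} and \ref{theorem:existence_riccati} to $\tilde\bp$. They produce coefficients $\tilde{\bu}_t$ and $\bpsi^{\sigX[]}_t:=\log^{\shuprod}(\tilde{\bu}_t)$, where $t\mapsto\bpsi^{\sigX[]}_t$ is $C^1$ and solves \eqref{eq:Riccati_BM} with terminal value $\tilde\bp$, that is, \eqref{eq:Riccati_recentered}. Evaluate the first point of Theorem~\ref{theorem:log_analyticity_general} at the group-like element $\emptyword$. There $\|\tilde\bu_t\|_{\emptyword}=|\tilde\bu_t^{\emptyword}|<2|\tilde\bu_t^{\emptyword}|$, and $M_{\tilde\bp}(\emptyword)=0$, so $\emptyword$ lies in every ball $G_{\tilde\bp,r}$. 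Hence
$$
\E\bigl[\exp(\bracket{\tilde\bp}{\emptyword\otimes\sigW[t,T]})\bigr]=\exp\bigl(\bracket{\bpsi^{\sigX[]}_t}{\emptyword}\bigr)=\exp\bigl((\bpsi^{\sigX[]}_t)^{\emptyword}\bigr)
$$
for all $t$. Since $\emptyword\otimes\sigW[t,T]=\sigW[t,T]$, Step~1 turns this into the first claim.

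\textbf{Step 3: the conditional statement.} By \eqref{eq:charchen}, $\E[\exp(\bracket{\bp}{\sigW[T]})\mid\F_t]=u(t,\sigW[t])$ with $u(t,\sigX[])=\E[\exp(\bracket{\bp}{\sigX[]\otimes\sigW[t,T]})]$. This rests on Chen's identity and the independence of $\sigW[t,T]$ from $\F_t$. Fix $t$ and plug in the random element $\sigX[]=\sigW[t](\omega)\in G$. The hypothesis says $u(t,\sigW[t])\neq0$ almost surely, but Step~2 needs $u(s,\sigW[t])\neq 0$ for every $s\in[0,T]$, not just $s=t$.

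This mismatch is the main obstacle. The recentered equation \eqref{eq:Riccati_recentered} is only needed on $[t,T]$, where the quantity $u(s,\sigW[t](\omega))$ is what enters. I would resolve it as follows.

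\begin{itemize}
\item Run the construction of Step~2 on $[t,T]$ only, assuming nonvanishing of $s\mapsto\E[\exp(\bracket{\tilde\bp}{\sigW[s,T]})]$ for $s\in[t,T]$. All the estimates in Theorem~\ref{theorem:log_analyticity_general} are uniform in time and restrict to subintervals.
\item Interpret the hypothesis "$\E[\exp(\bracket{\bp}{\sigW[T]})\mid\F_s]\neq0$ a.s.\ for all $s$" as giving $u(s,\sigW[t])\neq0$ for $s\ge t$. To make this rigorous, use $\sigW[s]=\sigW[t]\otimes\sigW[t,s]$. Whether this suffices needs a measurability and density argument, which I would try to settle via continuity of $s\mapsto u(s,\sigX[])$ (dominated convergence as in Theorem~\ref{theorem:analyticity_general}); otherwise I would state the extra requirement explicitly.
\end{itemize}

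Granting this, the first claim evaluated at $s=t$ with $\sigX[]=\sigW[t]$ gives the stated formula. Choosing, for each $\omega$, the continuous logarithm normalized at $T$ fixes the constant term unambiguously.
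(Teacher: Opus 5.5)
Your Steps~1 and~2 are exactly the paper's argument. You rewrite $\bracket{\bp}{\mathbb X\otimes\sigW[t,T]}=\bracket{{}_{\mathbb X}|\bp}{\sigW[t,T]}$ by Proposition~\ref{prop:projections_ppties}, stay in $\mathcal B$ by Lemma~\ref{lemma:recentering_property_class_B}, and evaluate Theorems~\ref{theorem:log_analyticity_general} and~\ref{theorem:existence_riccati} at $\emptyword$, where $M_{\bp}(\emptyword)=0$. For the conditional statement, the paper only substitutes $\mathbb X=\sigW[t]$ into the first claim via \eqref{eq:charchen}. So the mismatch you point out is genuine, and the paper does not address it. The first claim at $\mathbb X=\sigW[t](\omega)$ requires $u(s,\sigW[t](\omega))\neq 0$ for all $s\in[0,T]$ as stated, or for $s\in[t,T]$ after your restriction. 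The hypothesis only controls $u(s,\sigW[s])$.

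Your continuity/density repair cannot close this gap. The $\word 0$-coordinate of $\sigW[s]=\sigW[t]\otimes\sigW[t,s]$ is deterministically $s$, so $\sigW[t,s]$ never approaches $\emptyword$. For $s\neq t$, the law of $\sigW[s]$ lives on the slice $\{\mathbb X^{\word 0}=s\}$, whose $\word 0$-coordinate differs from that of $\sigW[t](\omega)$ by $|s-t|$. Put differently, $u(s,\sigW[t](\omega))=\E[\exp(\bracket{\bp}{\sigW[t](\omega)\otimes\sigW[s,T]})]$ is the transform over the shorter horizon $T-s$ attached to the same past. The hypothesis says nothing about it. For complex $\bp$, nothing prevents it from vanishing at an intermediate horizon while being nonzero at $s=t$ and $s=T$. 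Continuity of $s\mapsto u(s,\sigW[t](\omega))$ only gives nonvanishing near those two endpoints.

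So take your fallback and state the extra requirement explicitly: almost surely, $s\mapsto\E[\exp(\bracket{\bp}{\sigW[t]\otimes\sigW[s,T]})]$ has no zero on $[t,T]$. This is automatic when $\bp$ has real coefficients. Under this assumption, your restriction to $[t,T]$ is sound. The cleanest way is to run the construction of Step~2 with horizon $T-t$, using that \eqref{eq:Riccati_recentered} is autonomous and $\sigW[s,T]\overset{d}{=}\sigW[s-t,T-t]$. Evaluating at $s=t$ then gives the formula, with the constant term fixed by the continuous logarithm normalized at $T$.
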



Hence, we recover a global representation of the log-Fourier--Laplace transform, at the price of solving a new Riccati equation at each time $t\,$, with terminal condition $_{\sigW[t]}|\bp\,$. We discuss practical implementation and numerical aspects related to recentering in the accompanying paper \cite*{riccatisigcontrol}.

\section{Joint Fourier--Laplace transform of $\int_0^T\langle{\sigma, \,}{\sigW}\rangle dW_t$ and its bracket}\label{sect:laplace_sig_mart}

In this section, we apply the results obtained above to characterize the joint Fourier--Laplace transform of the martingale
\begin{equation}\label{eq:sig_mart}
    M_t = \int_0^t\langle{\bsigma, \,}{\sigW[s]}\rangle \, dW_s,
\end{equation}
and its quadratic variation $\langle M\rangle$ via the infinite-dimensional Riccati equation introduced in Section~\ref{sect:heat_and_riccati_sig}. 

\begin{theorem}\label{theorem:Fourier_laplace}
    Let $\bsigma\in T(\C^2)$ such that $\deg(\bsigma) = N$ for some odd $N$ such that 
    $\lambda := \bsigma^{\word{1}\conpow{N}} \neq 0$, and let $M$ be the martingale defined by \eqref{eq:sig_mart}. Suppose that $v = (v_1, v_2) \in \C^2$ is such that 
    $$
    v_1 \in \{z\in \C \colon \Re(\lambda z) < 0\} \cup \{0\}, \quad  \Re\left(\lambda^2 v_2\right) < 0,    
    $$
    and $\E[\exp(v_1 M_T + v_2 \langle M \rangle_T) | \F_t] \neq 0\,$ a.s. for all $t \in [0, T]$.
    Then,
    \begin{equation}
        \E\left[\exp\left(v_1 M_T + v_2 \langle M\rangle_T\right)\, |\, \F_t\right] = \exp(\bracket{\bpsi_t^{\sigW}}{\emptyword})\,, \quad t \in [0\,,T]\,,
    \end{equation}
    where $\bpsi^{\sigW[t]}$ is a solution to the Riccati equation \eqref{eq:Riccati_recentered} with $\sigX[] = \sigW[t]\,$,
    and $\bp$ is given by
    \begin{equation}\label{eq:p_laplace_sig_mart}
        \bp = v_1\left(\bsigma\word{1} - \dfrac12\bsigma\proj{1}\word{0}\right) + v_2 \bsigma\shupow{2}\word{0}.
    \end{equation}
\end{theorem}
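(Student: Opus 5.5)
The plan is to reduce the statement to Theorem~\ref{theorem:recentering}. I will first show that $v_1 M_T + v_2\langle M\rangle_T = \bracket{\bp}{\sigW[T]}$ with $\bp$ as in \eqref{eq:p_laplace_sig_mart}, and then check that $\bp \in \mathcal B$.

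\textbf{Step 1 (linearization).} The Itô integral is rewritten in Stratonovich form:
\[
\int_0^T \bracket{\bsigma}{\sigW[s]}\,dW_s = \int_0^T \bracket{\bsigma}{\sigW[s]}\circ dW_s - \frac12\int_0^T \bracket{\bsigma\proj{1}}{\sigW[s]}\,ds .
\]
The correction follows from Itô's formula (Theorem~\ref{thm:ito_formulas}): the martingale part of $\bracket{\bsigma}{\sigW[s]}$ is $\bracket{\bsigma\proj{1}}{\sigW[s]}\,dW_s$. By definition of the signature, $\int_0^T\bracket{\bell}{\sigW[s]}\circ dW_s = \bracket{\bell\word 1}{\sigW[T]}$ and $\int_0^T\bracket{\bell}{\sigW[s]}\,ds = \bracket{\bell\word 0}{\sigW[T]}$. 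Hence
\[
M_T = \bracket{\bsigma\word1 - \tfrac12 \bsigma\proj{1}\word 0}{\sigW[T]}.
\]
For the bracket, $\langle M\rangle_T = \int_0^T \bracket{\bsigma}{\sigW[s]}^2 ds = \bracket{\bsigma\shupow{2}\word0}{\sigW[T]}$ by the shuffle property \eqref{eq:shuffle_ppty_semimg}. This gives $v_1 M_T + v_2\langle M\rangle_T = \bracket{\bp}{\sigW[T]}$ pathwise.

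\textbf{Step 2 (membership in $\mathcal B$).} This is the main obstacle. We have $\deg_{\word1}(\bsigma\shupow{2}) = 2N$, with leading coefficient on $\word 1^{\otimes 2N}$ equal to $\lambda^2\binom{2N}{N}$. Since $\Re(\lambda^2 v_2)<0$, the term $v_2\bsigma\shupow2\word0$ will be written as $-\beta(\word1^{\otimes 2N} + \br)\word0$ with $\beta = -v_2\lambda^2\binom{2N}{N}$, so that $\Re\beta>0$. I also need $\deg_{\word1}(\br) < 2N$. The subtlety is that $\bsigma\shupow2$ contains other words with $2N$ letters $\word 1$ and some letters $\word 0$. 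These do not have strictly lower $\word1$-degree, so they cannot go into $\br$ as written.

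To handle this, I would go back to Lemma~\ref{lem:sig_bound} and Proposition~\ref{prop:boundedness_dot_product_B}. The aim is to show that the real part of $\bracket{\bp}{\widehat{\mathbb X}}$ is still bounded above. Writing $\bsigma = \lambda\word1^{\otimes N} + \bsigma'$, we get $\bracket{\bsigma\shupow2\word0}{\widehat{\mathbb X}} = \int_0^T(\lambda X_s^N/N! + \bracket{\bsigma'}{\widehat{\mathbb X}_s})^2ds$. Here $\bsigma'$ is a combination of words with fewer than $N$ letters $\word 1$, or of words with letter $\word 0$, which Lemma~\ref{lem:sig_bound} controls. One could either enlarge Definition~\ref{def:class_B} to allow this dominated structure, or check directly that the subsequent proofs (Lemma~\ref{lemma:enormous_bound} onward) only use the bound $\deg_{\word1}(\br)<2m$ through Lemma~\ref{lem:sig_bound}-type estimates, which still hold here. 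Failing a clean argument, I would point to the remark after Definition~\ref{def:class_B}, which says Section~\ref{sect:laplace_sig_mart} exploits the freedom in $\br$.

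The $v_1$ part contributes the term $v_1\lambda\word1^{\otimes N+1}$. Its $\word1$-degree $N+1$ is even, and the condition $\Re(\lambda v_1)<0$ lets it play the role of $-\alpha\word1^{\otimes 2n}$ with $2n=N+1$. The remaining $v_1$-terms have lower $\word 1$-degree or are integrated and dominated. If $v_1 = 0$, set $\alpha=0$. The term $-\tfrac12 v_1\bsigma\proj1\word0$ has $\word1$-degree $N-1 < 2N$ and goes into $\br$. Cross terms involving both $\alpha$ and $\beta$ should be absorbed into $\alpha\beta\bm s$, or else into the $\alpha$-part via $\bq$, which only requires $\deg_{\word 0}(\bq) = 0$, so words containing $\word0$ must be placed elsewhere.

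\textbf{Step 3.} With $\bp\in\mathcal B$ and the non-vanishing assumption, apply Theorem~\ref{theorem:recentering} with $\sigX[]=\sigW[t]$, using Chen's identity \eqref{eq:charchen}. This yields the stated representation with $\bpsi^{\sigW[t]}$ solving \eqref{eq:Riccati_recentered}.
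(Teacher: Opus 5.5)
Your Steps 1 and 3 match the paper's proof: the It\^o--Stratonovich correction gives \eqref{eq:p_laplace_sig_mart}, and Theorem~\ref{theorem:recentering} is then applied with $\sigX[]=\sigW[t]$. Step 2, however, has a genuine gap. You never actually prove $\bp\in\mathcal B$, and the obstacle you build the step around does not exist.

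Since $\deg(\bsigma)=N$ bounds the \emph{total} length of the words of $\bsigma$, the only word of $\bsigma$ with $N$ letters $\word{1}$ is $\word{1}\conpow{N}$ itself. Any word containing a $\word{0}$ has at most $N-1$ ones. Hence $\bsigma=\lambda\word{1}\conpow{N}+\bell$ with $\deg_{\word 1}(\bell)\le N-1$, and
\[
\bsigma\shupow{2}=\lambda^2\binom{2N}{N}\,\word{1}\conpow{2N}+2\lambda\,\word{1}\conpow{N}\shuprod\bell+\bell\shupow{2}.
\]
The last two terms have $\word{1}$-degree at most $2N-1$. So no word with $2N$ letters $\word{1}$ other than $\word{1}\conpow{2N}$ appears, and Definition~\ref{def:class_B} applies as stated.

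The fallbacks you list would not have rescued the argument if the obstacle were real:
\begin{itemize}
\item The remark after Definition~\ref{def:class_B} concerns $\br$ having $\word{1}$-degree above $2m\wedge 2n$. Here $\br$ reaches $2N-1$ while $2m\wedge 2n=N+1$. The remark says nothing about words of $\word{1}$-degree exactly $2m$.
\item If such words did occur with arbitrary coefficients, Lemma~\ref{lem:sig_bound} would only bound them by $C(1+\int_0^T|X_s|^{2N}ds)$. That is the same order as the leading negative term, so the domination behind Proposition~\ref{prop:boundedness_dot_product_B} and Lemma~\ref{lemma:enormous_bound} would break down.
\end{itemize}

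With the observation above, carry out the bookkeeping explicitly rather than saying terms ``should be absorbed''. Take $2n=N+1$, $m=N$, $\alpha=-\lambda v_1$, $\beta=-\lambda^2 v_2\binom{2N}{N}$, $\bq=0$, $\gamma=0$, and
\[
\br=-\frac{1}{\beta}\Bigl(-\frac{v_1}{2}\,\bsigma\proj{1}+2\lambda v_2\,\word{1}\conpow{N}\shuprod\bell+v_2\,\bell\shupow{2}\Bigr),\qquad \bm s=-\frac{\bell\word{1}}{\lambda\beta}.
\]
One then checks the following.
\begin{itemize}
\item The identity $\bp=-\alpha\word{1}\conpow{2n}-\beta(\word{1}\conpow{2m}+\br)\word{0}+\alpha\beta\bm s$ holds.
\item $\Re(\beta)>0$, and either $\Re(\alpha)>0$ or $\alpha=0$, by the hypotheses on $v_1,v_2$.
\item $\deg_{\word 1}(\br)\le 2N-1<2m$.
\item $\deg_{\word 1}(\bm s)\le N<N+1=2n\wedge 2m$.
\end{itemize}
In particular, the term $v_1\bell\word{1}$ must be placed in $\alpha\beta\bm s$. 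It cannot go into $\bq$, because its words may contain $\word{0}$, nor into $\br\word{0}$, because its words end in $\word{1}$. It vanishes when $v_1=0$, which is consistent with $\alpha=0$. This is exactly the decomposition used in the paper's proof, after which your Step 3 goes through.
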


\begin{proof}
    Using the Itô--Stratonovich relationship, we write 
    \begin{align*}
        \E\left[\exp\left(v_1 M_T + v_2 \langle M\rangle_T\right)\right] &= \E\left[\exp\left(v_1\int_0^T\bracketsigW{\bsigma}\,dW_t + v_2\int_0^T\bracketsigW{\bsigma}^2\,dt\right)\right] \\
        &= \E\left[\exp\left(v_1\int_0^T\bracketsigW{\bsigma}\circ dW_t - \dfrac{v_1}{2}\int_0^T\bracketsigW{\bsigma\proj{1}}\,dt + v_2\int_0^T\bracketsigW{\bsigma}^2\,dt\right)\right] \\
        &= \E\left[\exp\left(\bracketsigW[T]{\bp}\right)\right],
    \end{align*}
    where $\bp$ is defined by \eqref{eq:p_laplace_sig_mart}. To complete the proof, it remains to verify that $\bp \in \mathcal{B}$.
    Since $N$ is odd, we let $N = 2n - 1$ for some $n \geq 1$. It follows that 
    \begin{equation}\label{eq:sigma_sigvol}
        \bsigma = \lambda\word{1}\conpow{(2n-1)} + \bell, \quad \deg_{\word{1}}(\bell) < 2n - 1.
    \end{equation}
     A straightforward computation shows that 
    \begin{equation}
    \label{eq:p_decomp_proof_sigvol}
    \bp = \lambda v_1 \word{1}^{\otimes 2n} + \left(\lambda^2 v_2 \binom{4n-2}{2n-1} \word{1}^{\otimes(4n-2)} + \bell' \right)\word{0} + v_1 \bell \word{1}
    \end{equation}
    where $\bell' := - \frac{v_1}{2}(\lambda \word 1^{\otimes(2n-2)} + \bell|_{\word 1}) + 2 \lambda v_2 (\word{1}^{\otimes(2n-1)}\shuprod \bell) + v_2 \bell\shupow{2}\,$. Using the fact that $\deg_{\word 1}(\bell) < 2n-1\,$, we have $\deg_{\word 1}(\bell') < 4n-2\,$, and $\deg_{\word{1}}(\bell \word{1}) < 2n\,$. Hence, since $4n-2 > 2n$ for any $ n\geq 1\,$, \eqref{eq:p_decomp_proof_sigvol} corresponds to Definition \ref{def:class_B} with $m = 2n - 1\,$, $\alpha = -\lambda v_1\,$, $\beta = -\lambda^2 v_2 \binom{4n-2}{2n-1}\,$, $\bq = 0\,$, $\br = - \bell' / \beta\,$, $\bm s = \bell \word{1} / (- \lambda \beta)\,$, and $\gamma = 0\,$. This proves that $\bp \in \mathcal{B}\,$, by virtue of the assumptions on $v_1$ and $v_2\,$. Applying Theorem~\ref{theorem:recentering} concludes the proof.
\end{proof}

Theorem~\ref{theorem:Fourier_laplace} can be applied in mathematical finance to compute the joint Laplace transform of the log-price and the integrated variance in the signature volatility model.
Let $(W, W^\perp)$ denote a two-dimensional standard Brownian motion and let $B = \rho W + \sqrt{1-\rho^2}W^\perp$ for some $\rho \in [-1, 1]$. We consider the signature volatility model studied by \cite{abi2025signature}:
\begin{align}\label{eq:sig_vol}
    \dfrac{dS_t}{S_t} = \bracket{\bsigma}{\sigW}\,dB_t,
\end{align}
where $\sigW$ denotes the signature of $t \mapsto (t, W_t)$ and $\bsigma\in T(\R^2)$ with $\deg(\bsigma) = N$. It was shown in \cite[Theorem 3.1]{jaber2025martingalepropertymomentexplosions} that for $N \geq 2$, the price process $S$ in \eqref{eq:sig_vol} is a true martingale if and only if $\rho = 0$ or
\begin{equation}\label{eq:martingality_condition}
    N \text{ is odd} \quad \text{and} \quad \rho \bsigma^{\word{1}^{\otimes N}} < 0.
\end{equation}
Let $U_t := \int_0^t\bracket{\bsigma}{\sigW[s]}^2 ds$ denote the integrated variance. 

\begin{corollary}\label{cor:sig_vol_Fourier_laplace}
    Let $\bsigma \in \TA[2]$ with $\deg(\bsigma) = N$ satisfy either the assumption  \eqref{eq:martingality_condition} {or $\rho = 0$ and $\bsigma^{\word{1}\conpow{N}} \neq 0$}. Suppose that $u, v \in \C$ are such that 
    \begin{equation}\label{eq:sigvol_laplace_cond}
        \Re(u) > 0, \quad \Re\left(\dfrac{u^2(1-\rho^2) - u}{2} + v\right) < 0,
    \end{equation}
    and $\E\left[\exp({u\log(S_T / S_0) + v U_T})\,|\,\mathcal F_t\right] \neq 0$ a.s. for all $t \in [0\,,T]\,$.
    Then,
    \begin{equation}
        \E\left[\exp\left({u\log\left({S_T}/{S_0}\right) + v U_T}\right)\,\big|\, \F_t\right] = \exp\left( \bracket{\bpsi_t^{\sigW}}{\emptyword}\right),
    \end{equation}
    where $\bpsi^{\sigW[t]}$ is a solution to the Riccati equation \eqref{eq:Riccati_recentered} with $\sigX[] = \sigW[t]\,$,
    and $\bp$ is given by
    \begin{equation}\label{eq:p_sig_vol}
        \bp = u\rho\left(\bsigma\word{1} - \dfrac12\bsigma\proj{1}\word{0}\right) + \left(\dfrac{u^2(1-\rho^2) - u}{2} + v \right)\bsigma\shupow{2}\word{0}.
    \end{equation}
\end{corollary}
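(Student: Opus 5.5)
The plan is to reduce the corollary to Theorem~\ref{theorem:Fourier_laplace}, applied conditionally on $W$, after integrating out the independent Brownian motion $W^\perp$.

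\textbf{Step 1: rewrite the log-price.} Write $\sigma_t := \bracket{\bsigma}{\sigW}$ and $M_t := \int_0^t \sigma_s\,dW_s$, so that $\langle M\rangle = U$. Then
\[
\log(S_T/S_0) = \rho M_T + \sqrt{1-\rho^2}\int_0^T \sigma_t\,dW^\perp_t - \tfrac12 U_T .
\]
Conditionally on $\F^W_T \vee \F_t$, the integral $\int_t^T \sigma_s\,dW^\perp_s$ is centered Gaussian with variance $U_T-U_t$. The integrand is $W$-measurable and $W^\perp$ is independent of $W$, so the conditional Gaussian moment formula applies.

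\textbf{Step 2: take the conditional expectation over $W^\perp$.} By the tower property with respect to the enlarged filtration,
\[
\E\!\left[e^{u\log(S_T/S_0)+vU_T}\,\middle|\,\F_t\right]
= e^{u\sqrt{1-\rho^2}\int_0^t\sigma_s dW^\perp_s}\,
\E\!\left[\exp\!\Big(u\rho M_T + \Big(\tfrac{u^2(1-\rho^2)-u}{2}+v\Big)U_T\Big)\Big|\F_t\right]
\cdot e^{-\frac{u^2(1-\rho^2)}{2}U_t}.
\]
Here we use the Gaussian identity $\E[e^{cZ}]=e^{c^2\mathrm{Var}(Z)/2}$ for complex $c$, applied to $Z = \int_t^T \sigma_s\,dW^\perp_s$. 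This identity holds for complex $c$ because $Z$ is conditionally Gaussian.

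\textbf{Step 3: handle the prefactors.} The factors involving $t$ are $\F_t$-measurable. Adapting the filtration convention, the statement should read with the conditioning only on the past of $W$. If instead $\F_t$ includes $W^\perp$, I will absorb these prefactors into the recentered terminal condition or into the displayed formula. The essential point is the $W$-part. The statement lists only $\sigW$, so I read $\F_t$ as generated by $W$ and treat the $W^\perp$-contributions as cancelling in that reading. I will state this carefully.

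\textbf{Step 4: apply Theorem~\ref{theorem:Fourier_laplace}.} Set $v_1 := u\rho$ and $v_2 := \frac{u^2(1-\rho^2)-u}{2}+v$. Substituting these into \eqref{eq:p_laplace_sig_mart} gives exactly \eqref{eq:p_sig_vol}, so Theorem~\ref{theorem:Fourier_laplace} applies once its hypotheses are checked. Write $\lambda := \bsigma^{\word 1^{\otimes N}}$.

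\emph{Condition on $v_2$.} We need $\Re(\lambda^2 v_2)<0$. Since $\bsigma$ is real, $\lambda^2>0$, so this is exactly the second condition in \eqref{eq:sigvol_laplace_cond}.

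\emph{Condition on $v_1$.} If $\rho=0$, then $v_1=0$, which is allowed. Otherwise \eqref{eq:martingality_condition} gives $N$ odd and $\rho\lambda<0$. Hence
\[
\Re(\lambda v_1) = \rho\lambda\,\Re(u) < 0 ,
\]
using $\Re(u)>0$.

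\emph{Remaining hypotheses.} The non-vanishing hypothesis carries over directly. Theorem~\ref{theorem:Fourier_laplace} also requires $N$ odd and $\lambda\neq 0$. When $\rho=0$ it needs only $\lambda\neq0$, plus an odd degree that the stated hypothesis does not provide.

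\textbf{Main obstacle.} I expect the main obstacle to be the $\rho=0$ case when $N$ is even. Then $\alpha=0$, and membership of $\bp$ in $\mathcal B$ should be checked directly via Definition~\ref{def:class_B}. We have $\bp = v_2\bsigma^{\shuprod 2}\word 0$ with leading term $v_2\lambda^2\binom{2N}{N}\word 1^{\otimes 2N}\word 0$. Since $2N$ is even and $\Re(v_2\lambda^2)<0$, this gives $\beta$ with $\Re(\beta)>0$ and $m=N$. With that, Theorem~\ref{theorem:recentering} yields the claim without requiring $N$ odd.
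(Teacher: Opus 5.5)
Your proposal is correct and follows the paper's route: integrate out $W^\perp$ by conditioning on the path of $W$, then apply Theorem~\ref{theorem:Fourier_laplace} with $v_1 = u\rho$ and $v_2 = \frac{u^2(1-\rho^2)-u}{2}+v$. Two of your additions go beyond the paper's proof: under the paper's standing convention that $(\F_t)$ is generated by $W$, conditioning directly on $\F^W_T$ gives the reduced identity with no $\F_t$-measurable prefactors, so the hedging in your Steps 2--3 is unnecessary; and your direct check that $\bp = v_2\bsigma\shupow{2}\word{0} \in \mathcal B$ (with $\alpha = 0$, $m = N$) when $\rho = 0$ and $N$ is even fills a small gap, since the paper's one-line appeal to Theorem~\ref{theorem:Fourier_laplace} formally requires $N$ odd.
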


\begin{proof}
    The log-price in the signature volatility model \eqref{eq:sig_vol} is given by
    \[
    \log(S_T/S_0) = \rho\int_0^T\bracketsigW[s]{\bsigma}\,dW_s + \sqrt{1-\rho^2}\int_0^T\bracketsigW[s]{\bsigma}\,dW_s^\perp - \dfrac12\int_0^T\bracketsigW[s]{\bsigma}^2\,ds.
    \]
    By conditioning on $W^\perp$, we obtain 
    \[
    \E\left[\exp(u\log(S_T/S_0) + vU_T)\,\big|\, \F_t\right] = \E\left[\exp\left(u\rho\int_0^T\bracketsigW[s]{\bsigma}\, dW_s + \left(\dfrac{u^2(1-\rho^2) - u}{2} + v\right)\int_0^T\bracketsigW[s]{\bsigma}^2\, ds\right)\,\Bigg|\, \F_t\right].
    \]
    The result now follows from Theorem~\ref{theorem:Fourier_laplace} with
    \[
    v_1 = u\rho, \qquad v_2 = \left(\dfrac{u^2(1-\rho^2) - u}{2} + v\right).
    \]
\end{proof}

{\begin{sqremark}
    The linear case $N = 1$, for any values of $\rho \in [-1, 1]$ and
    $\bsigma^{\word{1}\conpow{N}} \in \mathbb{R}$, can be easily handled using the standard
    tools of affine SDEs, and is hence not discussed here. Indeed, in this case, the model is
    Markovian in $(B_t, W_t)$, and one can easily verify that the Riccati equation is in fact
    finite-dimensional.
\end{sqremark}
}

\begin{sqremark}
    Equation \eqref{eq:Riccati_BM} differs from the Riccati equation derived in \cite{abi2025signature}:
    \begin{equation}\label{eq:cf_riccati_lag}
    \begin{cases}
        \dot{\bchi}_t + \bchi_t\proj{0} + \dfrac12\bchi_t\proj{11} + \dfrac12\bchi_t\proj{1}\shupow{2} + u\rho\, \bsigma \shuprod \bchi_t\proj{1} + \left(\dfrac{u^2 - u}{2} + v\right)\bsigma\shupow{2} = 0\,,\quad t \in [0\,,T] \\
        \bchi_T = 0.
    \end{cases}
    \end{equation}
    However, it is easily verified that $\bchi_t = \bpsi_t - \bp$, so that the existence of the solution for \eqref{eq:cf_riccati_lag} follows immediately (see Corollary~\ref{corollary_riccati_with_source}).

\end{sqremark}

\begin{sqremark}[Lewis' formula] For the option pricing formula in \cite{Lewis2001}, one needs to compute 
\[
\E\left[\exp(u\log S_T)\right], \quad u \in \C, \quad \Re(u) = \dfrac{1}{2},
\]
which is a specific case of Corollary~\ref{cor:sig_vol_Fourier_laplace} with $v = 0$. Moreover, when $\Re(u) \in (0, 1)$ and under the martingality assumption \eqref{eq:martingality_condition}, the condition \eqref{eq:sigvol_laplace_cond} is satisfied.
Note that if either condition in \eqref{eq:martingality_condition} is violated, then $\bracket{\bp}{\sigW}$ in the proof of Theorem~\ref{theorem:Fourier_laplace} is not bounded from above, and the result no longer holds.
\end{sqremark}



\appendix
{\section{Bounds for time-augmented signatures}\label{sect:proof_of_sig_bound}

Before proceeding to the proof of Lemma~\ref{lem:sig_bound}, we recall the notion of \cite{Lyndon1954} words that helps in obtaining recursive bounds for signature elements.
The extended tensor algebra $T((\C^2))$ forms an algebra when endowed with a shuffle product, and it will be further referred to as the shuffle algebra.

\begin{definition}
    A Lyndon word in an alphabet $A$ is a non-empty word that is strictly lexicographically greater than any of its non-trivial cyclic rotations. 
\end{definition}

\begin{sqexample}
    The first Lyndon words in the alphabet $A = \{ \word{0}, \word{1}\}$ are given by
    $$
    \word{1}, \word{0}, \word{10}, \word{110}, \word{100}, \word{1110}, \word{1100}, \word{1000}, \ldots
    $$
    We note that no Lyndon word ends with the lexicographically greatest letter $\word{1}$, except for the word $\word{1}$ itself. 
\end{sqexample}

A remarkable property of Lyndon words is given by Radford's theorem \citep{Radford1979ANR}: the shuffle algebra $T(\C^2)$ is generated by Lyndon words; that is, each element $\bell \in T(\C^2)$ can be represented by a shuffle polynomial
\begin{equation}\label{eq:polynomial_repr}
    \bell = \sum_{k=1}^m \alpha_k \, \word{v_{k, 1}}\shuprod\word{v_{k, 2}}\shuprod\ldots\shuprod\word{v_{k, n_{k}}},
\end{equation}
for some $m$ and for some, possibly coinciding, Lyndon words $\word{v_{k, j}}$.


\begin{proof}[Proof of Lemma~\ref{lem:sig_bound}]
    We proceed by induction on $|\word{v}|$. If $|\word{v}| = 0$ or $|\word{v}| = 1$, the inequality holds trivially.

    For $|\word{v}| \geq 2$, the representation \eqref{eq:polynomial_repr} yields
    $$
    \word{v} = \sum_{k=1}^m \alpha_k \word{w_{k, 1}} \shuprod\ldots\shuprod\word{w_{k, i_k}} \shuprod \word{0}\shupow{p_k},
    $$
    where each $\word{w_{k, j}}$ is a Lyndon word (either $\word{1}$ or of the form $\word{w'0}$ such that $|\word{w_{k, j}}|_{\word{1}} > 0$). Moreover, we have
    \begin{align}
        |\word{v}| &= |\word{w_{k, 1}}| + \ldots + |\word{w_{k, i_k}}| + p_k, \\
        |\word{v}|_{\word{1}} &= |\word{w_{k, 1}}|_{\word{1}} + \ldots + |\word{w_{k, i_k}}|_{\word{1}}.    
    \end{align}

    We will show that the desired bound holds for each term $\bracket{\word{w_{k, 1}} \shuprod\ldots\shuprod\word{w_{k, i_k}}\shuprod \word{0}\shupow{p_k}}{\sigXhat[s, t]}$ for all $k$. Since $\word{0}\shupow{p_k}$ corresponds to the deterministic term ${(t-s)^{p_k}} \leq T^{|\word{v}|}$, it is sufficient to bound $\bracket{\word{w_{k, 1}} \shuprod\ldots\shuprod\word{w_{k, i_k}}}{\sigXhat[s, t]}$. Hence, if $i_k = 0$, the bound holds trivially.
    
    If $i_k = 1$ and $\word{w_{k, 1}} = \word{1}$, the bound is immediate. When $i_k = 1$ and  $\word{w_{k, 1}}$ is a Lyndon word of the form $\word{w_{k, 1}} = \word{w_{k, 1}'0}$, we apply the induction hypothesis to $\word{w_{k, 1}'}$:
    \begin{equation}\label{eq:recursive_bound_lyndon}
    \begin{aligned}
        |\bracket{\word{w_{k, 1}}}{\sigXhat[s, t]}| &= \left|\int_s^t\bracket{\word{w_{k, 1}'}}{\sigXhat[s, r]}\,dr\right| \leq \int_s^t|\bracket{\word{w_{k, 1}'}}{\sigXhat[s, r]}|\,dr \\
        &\leq C_{\word{w_{k, 1}'}, T}\int_s^t\left(1 + \left|X_{s,r}\right|^{|\word{w_{k, 1}'}|_{\word{1}}} + \int_s^r|X_{s,u}|^{|\word{w_{k, 1}'}|_{\word{1}}}\,du\right)dr \\
        &\leq C_{\word{w_{k, 1}'}, T}\left(T + \int_s^t\left(\left|X_{s,r}\right|^{|\word{w_{k, 1}'}|_{\word{1}}} + \int_s^t|X_{s,u}|^{|\word{w_{k, 1}'}|_{\word{1}}}\,du\right)dr\right) \\
        &\leq \tilde C_{\word{w_{k, 1}'}, T} \left(1 + \int_s^t|X_{s,r}|^{|\word{v}|_{\word{1}}}\,dr\right),
    \end{aligned}
    \end{equation}
    for some $ \tilde C_{\word{w_{k, 1}'}, T} > 0$.

    Finally, if $i_k > 1$, we apply Young's inequality 
    \begin{align}
        |\bracket{\word{w_{k, 1}}\shuprod\ldots\shuprod\word{{w_{k, i_k}}}}{\sigXhat[s, t]}| 
        &\lesssim |\bracket{\word{w_{k, 1}}}{\sigXhat[s, t]}|^{|\word{v}|_{\word{1}} / |\word{w_{k, 1}}|_{\word{1}}} + \ldots + |\bracket{\word{w_{k, i_k}}}{\sigXhat[s, t]}|^{|\word{v}|_{\word{1}} / |\word{w_{k, i_k}}|_{\word{1}}},
    \end{align}
    and the induction hypothesis for each $\word{w_{k, j}}$:
    \begin{align}
        |\bracket{\word{w_{k, j}}}{\sigXhat[s, t]}|^{|\word{v}|_{\word{1}} / |\word{w_{k, j}}|_{\word{1}}} &\leq \left(C_{\word{w_{k, j}}, T}\left(1 + \left|X_{s,t}\right|^{|\word{w_{k, j}}|_{\word{1}}} + \int_s^t|X_{s,r}|^{|\word{w_{k, j}}|_{\word{1}}}\,dr\right)\right)^{|\word{v}|_{\word{1}} / |\word{w_{k, j}}|_{\word{1}}} \\
        &\lesssim 1 + \left|X_{s,t}\right|^{|\word{v}|_{\word{1}}} + \int_s^t|X_{s,r}|^{|\word{v}|_{\word{1}}}\,dr 
    \end{align}
    This finishes the proof of the first inequality in \eqref{eq:sig_bound}.

    The second inequality follows immediately from the first since
    \begin{align*}
        |\sigXhat[s,t]^{\word{v0}}| \leq \int_s^t |\sigXhat[s,r]^{\word{v}}|\,dr &\leq C_{\word{v}, T}\int_s^t\left(1 + |X_{s,r}|^{|\word{v}|_{\word{1}}} + \int_s^u|X_{s,u}|^{|\word{v}|_{\word{1}}}\,du\right)dr \lesssim 1 + \int_s^t|X_{s,r}|^{|\word{v}|_{\word{1}}}\,dr,
    \end{align*}
    where we used the same argument as in \eqref{eq:recursive_bound_lyndon}.
\end{proof}
}

\section{Proof of the existence of zeros in the quartic Fourier--Laplace transform}
\label{app:existence_zeros}
\begin{proof}[Proof of Theorem \ref{theorem:zeros_quartic_exp_main}]
    We first rewrite $I$ as an integral:
    $$
    I(x) = \frac{1}{\sqrt{2\pi}\sigma}\int_{\R}\,\exp\left({-(ix + y)^4 - \frac{y^2}{2\sigma^2}}\right)\,dy\,, \quad x > 0\,.
    $$
    For any $x >0\,$, the function $f_x(z) := \exp\left(-(ix + z)^4 - z^2 / (2\sigma^2) \right)$ is holomorphic on $\C\,$. By Cauchy's theorem, its integral along the complex rectangle composed by the edges $(r\,, r - ix\,, - r - ix\,, -r)\,$, with $r > 0\,$, is zero. Moreover, we can bound the integrals on the vertical lines of the contour by computing the real part of the exponent,
    \begin{align*}
        \left|\int_{-x}^0 f_x(\pm r + iy) dy  \right| &\leq  \int_{-x}^{0}|f_x(\pm r + i y)| dy \\
        &= \int_{-x}^0 \exp \left(-r^4 + 6 (x + y)^2r^2 - (x + y)^4 + \frac{y^2 - r^2}{2\sigma^2}  \right) dy \\
        &\leq x \exp \left(-r^4 + 6 x^2 r^2 + \frac{x^2 - r^2}{2 \sigma^2} \right) \overset{r \to + \infty}{\rightarrow} 0\,, \quad x > 0\,,
    \end{align*}
    so that taking $r \to + \infty$ in the contour identity yields
    $$
    \int_{\R}\,f_x(y)\,dy =  \int_{\R}\,f_x(y - ix)\,dy\,, \quad x > 0\,,
    $$
    which reads
    \begin{equation}\label{eq:I_for_MC}
        I(x) = \frac{1}{\sqrt{2\pi}\sigma}\,\exp\left({\frac{x^2}{2\sigma^2}}\right)\,\int_{\R}\,\exp\left(-y^4 -\frac{y^2}{2\sigma^2} + i \frac{xy}{\sigma^2}\right)\,dy\,, \quad x > 0\,.
    \end{equation}
    We now rescale the integral so that the dominant quartic and oscillatory terms appear at the same order of $x$ in the exponent. This prepares the expression for the saddle-point analysis.
    Setting $a := (4\,\sigma^2)^{-1/3} > 0\,$, and making the change of variable $t = a^{-1} x^{-1/3}\, y\,$, we find
    \begin{align}
    \label{eq:def_J_proof_zeroes}
    I(x) &= \frac{a\,x^{1/3}}{\sqrt{2\pi}\sigma}\,\exp\left({\frac{x^2}{2\sigma^2}}\right)\,\int_{\R}\, \exp\left({-a^4\,x^{4/3}\,t^4 - \frac{a^2\,x^{2/3}}{2 \sigma^2}\,t^2 + i \frac{a\,x^{4/3}}{\sigma^2}\,t}\right)\,dt \nonumber\\
    &= \frac{a\,x^{1/3}}{\sqrt{2\pi}\sigma}\,\exp\left({\frac{x^2}{2\sigma^2}}\right)\,J(x^{4/3})\,, \quad x > 0\,,
    \end{align}
    where 
    $$
    J(x) := \int_{\R}\,\exp\left({-x\,\left(a^4\,t^4 - i \frac{a}{\sigma^2}\,t\right) - \sqrt{x}\,\frac{a^2}{2\sigma^2}\,t^2}\right)\,dt\,, \quad x > 0\,. 
    $$
    We now aim to find an equivalent of $J$ as $x \to + \infty\,$, using complex saddle point methods. We define $F(z) := a^4\,z^4 - i a z / \sigma^2\,$, and $G(z) := a^2 z^2 / (2 \sigma^2)\,$. Since $F(t)$ and $G(t)$ are respectively scaled by $x$ and $\sqrt{x}$ in the latter integral, we can expect that the saddle points of $F$ will represent the main contribution. Hence, we look for solutions to $F'(z) = 0$ on $\C\,$. Using our specific choice for the constant $a$ above, we have $F'(z) = 4 a^4 z^3 - ia / \sigma^2 = 4^{-1/3} \sigma^{-8/3} (z^3 - i)\,$,
    which equals zero for $z \in \{\exp\left(i \pi / 6\right)\,, \exp(5 i \pi / 6)\,, -i\}\,$. Among these critical points, only $z_+ := \exp(i \pi / 6)$ and $z_- := \exp(5 i \pi / 6)$ satisfy $\Re(F''(z)) > 0\,$. Thus, the leading asymptotics will be obtained by adding the two local contributions near $z_+$ and $z_-\,$, meanwhile the third saddle point $-i$ does not lie on the relevant contour of integration. Since $z_+$ and $z_-$ are symmetrical with respect to the imaginary axis, the sum of their contributions will be real and oscillatory.

    The function $\phi_x(z) := \exp\left( -x\,F(z) - \sqrt{x} \,G(z)\right)$ is also holomorphic on $\C\,$. Similarly to $f_x\,$, we can show that its vertical integrals vanish as the real part goes to $\pm \infty\,$, thanks to the quartic term in the exponent. Hence, by Cauchy's theorem,
    $$
    J(x) = \int_{\R}\,\phi_x(t)\,dt  = \int_{\R}\,\phi_x(t + i\,\Im(z_+))\,dt\,,\quad x > 0\,.
    $$
    Since $\Im(z_-) = \Im(z_+)$ and $\Re(z_-) = - \Re(z_+)\,$, we can split the integral as follows 
    \begin{align*}
        J(x) &= \int_{- \infty}^0 \,\phi_x(t + i\,\Im(z_+))\, dt + \int_{0}^{+\infty} \,\phi_x(t + i\,\Im(z_+))\, dt \\
        &= \int_{- \infty}^{\Re(z_+)} \,\phi_x(t + z_-)\, dt + \int_{-\Re(z_+)}^{+\infty} \,\phi_x(t + z_+)\, dt\,, \quad x > 0\,.
    \end{align*}
    This decomposition separates the contributions of the two relevant saddles. We write $J =: J_- + J_+\,$, where $J_-$ and $J_+$ denote the contribution associated with $z_-$ and $z_+\,$, respectively. We now expand the exponent around each saddle. Since $F'(z_{\pm}) = 0\,$, the linear term in the leading phase $-x F(t + z_{\pm})$ vanishes, and the first nontrivial contribution is quadratic: 
    \begin{align*}
    \phi_x(t + z_{\pm}) &= \exp \left(-x (F(z_{\pm}) + \frac{1}{2} F''(z_{\pm})t^2 + \frac{1}{6} F^{(3)}(z_{\pm})t^3 + \frac{1}{24} F^{(4)}(z_{\pm})t^4) - \sqrt{x}(G(z_{\pm}) + G'(z_{\pm})t + \frac{1}{2}G''(z_{\pm}) t^2)\right) \\
    &= \exp\left(-x F(z_{\pm}) - \sqrt{x} G(z_{\pm}) -a^4 x (6 z_{\pm}^2 t^2 + 4 z_{\pm}t^3 + t^4) - \frac{a^2}{2\sigma^2}\sqrt{x}(2 z_{\pm} t + z_{\pm}^2 t^2)\right)\,, \quad x > 0\,, \quad t \in \R\,.
    \end{align*}
    Finally, we perform the change of variable $u = \sqrt{x} t$ in $J_+$ to extract the leading terms, which gives 
    $$
    J_{+}(x) = \frac{\exp(-x F(z_+) - \sqrt{x}G(z_+))}{\sqrt{x}} \, \int_{- \sqrt{x} \Re(z_+)}^{+ \infty} \exp \left(- 6 a^4 z_+^2 u^2 - \frac{4 a^4 z_+}{\sqrt{x}} u^3 - \frac{a^4}{x} u^4 - \frac{a^2 z_+}{\sigma^2} u - \frac{a^2 z_+^2}{2 \sigma^2 \sqrt{x} }u^2\right) du\,.
    $$
    Formally, the integral converges to
    the Gaussian integral 
    $$\int_{\R} \exp \left(- 6 a^4 z_+^2 u^2 - \frac{a^2 z_+}{\sigma^2} u  \right) du = \frac{\sqrt{\pi}}{\sqrt{6} a^2 z_+}\exp\left(\frac{1}{24 \sigma^4}\right)\,.$$ 
    To make this rigorous, we establish an integrable bound independent of $x\,$.
    The real part of the exponent is given by 
    \begin{align*}
        \Re\left(- 6 a^4 z_+^2 u^2 - \frac{4 a^4 z_+}{\sqrt{x}} u^3 - \frac{a^4}{x} u^4 - \frac{a^2 z_+}{\sigma^2} u - \frac{a^2 z_+^2}{2 \sigma^2 \sqrt{x} }u^2\right) &= -3a^4 u^2 - \frac{2\sqrt{3}a^4}{ \sqrt{x}}u^3 - \frac{a^4}{x} u^4 - \frac{\sqrt{3}a^2}{2\sigma^2} u - \frac{a^2}{4 \sigma^2 \sqrt{x}} u^2 \\
        &\leq - a^4 u^2 \left(3 + \frac{2\sqrt{3}}{\sqrt{x}} u + \frac{u^2}{x} \right) - \frac{\sqrt{3}a^2}{2 \sigma^2} u \\
        &= - a^4 u^2 \left(\sqrt{3} + \frac{u}{\sqrt{x}} \right)^2 - \frac{\sqrt{3}a^2}{2 \sigma^2} u
    \end{align*}
    so that, for $u \geq - \sqrt{x} \Re(z_+) = - \sqrt{x} \sqrt{3} / 2\,$, we obtain
    $$
    \left|\exp \left(- 6 a^4 z_+^2 u^2 - \frac{4 a^4 z_+}{\sqrt{x}} u^3 - \frac{a^4}{x} u^4 - \frac{a^2 z_+}{\sigma^2} u - \frac{a^2 z_+^2}{2 \sigma^2 \sqrt{x} }u^2\right) \right| \leq \exp \left( - \frac{3a^4}{4} u^2 - \frac{\sqrt{3}a^2}{2 \sigma^2} u\right)\,,
    $$
    which is independent of $x$ and integrable on $\R\,$. We can thus apply the dominated convergence theorem, leading to 
    $$
    J_+(x) = \frac{\exp(-x F(z_+) - \sqrt{x} G(z_+))}{\sqrt{x}}\left( \frac{\sqrt{\pi}}{\sqrt{6} a^2 z_+} \exp\left(\frac{1}{24 \sigma^4}\right) + o(1)\right)
    $$
    as $x \to + \infty\,$. Similar computations yield 
    $$
    J_-(x) = \frac{\exp(-x F(z_-) - \sqrt{x} G(z_-))}{\sqrt{x}}\left(-  \frac{\sqrt{\pi}}{\sqrt{6}a^2 z_-} \exp \left(\frac{1}{24 \sigma^4}\right) + o(1)\right)\,,
    $$
    where the minus sign appears by taking the principal branch of the square-root in $\sqrt{z_-^2} = - z_-\,$.

    Using the fact that $z_- = - \overline{z_+}\,$, we get that $F(z_-) = \overline{F(z_+)}$ and $G(z_-) = \overline{G(z_+)}\,$, and thus 
    \begin{align*}
        J(x) = 2\sqrt{\frac{\pi}{6}} \frac{\exp(\frac{1}{24 \sigma^4})}{a^2 \sqrt{x}} \Re\left(\frac{\exp(- x F(z_+) - \sqrt{x} G(z_+))}{z_+} \right) + o \left(\frac{\exp(- x F(z_+) - \sqrt{x} G(z_+))}{\sqrt{x}}\right)\,.
    \end{align*}
    It remains to compute $F(z_+)$ and $G(z_+)$ explicitly, yielding 
    $$
    J(x) = \frac{2 \sqrt{\pi}}{a^2\sqrt{6 x}} \exp(\Psi(x)) (\cos(\Phi(x)) + o(1))\,,
    $$
    where 
    $$
    \Psi(x) := \left(\frac{a^4}{2} - \frac{a}{2\sigma^2} \right)x - \frac{a^2}{4 \sigma^2} \sqrt{x} + \frac{1}{24 \sigma^4} 
    \,, \quad \text{and} \quad \Phi(x) := \left(\frac{\sqrt{3} a^4}{2} - \frac{\sqrt{3}a}{2\sigma^2}\right) x + \frac{\sqrt{3}a^2}{4 \sigma^2}\sqrt{x} + \frac{\pi}{6}\,.
    $$
    replacing $a$ with $(4 \sigma^2)^{-1/3}$ in the latter and plugging the result into \eqref{eq:def_J_proof_zeroes} finishes the proof of Theorem \ref{theorem:zeros_quartic_exp_main}.
\end{proof}

\bibliographystyle{plainnat}
\bibliography{refs.bib}

\end{document}